\newcommand\myshade{85}
\colorlet{mylinkcolor}{violet}
\colorlet{mycitecolor}{YellowOrange}
\colorlet{myurlcolor}{Aquamarine}
\long\def\MT@prot@l#1{%
  \MT@get@prot{#1}{left}%
  \leavevmode
  #1}
\def\MT@prot@group@@{%
    \catcode`\\=0
    \catcode`\^^M=10
    \makeatletter
    {\expandafter\MT@prot@l\expandafter
      {\expandafter\scantokens\expandafter{\the\MT@toks\noexpand}}}%
  \endgroup}
\newtheorem{theorem}{Theorem}
\newtheorem{lemma}{Lemma}
\newtheorem{proposition}{Proposition}
\theoremstyle{definition}
\newtheorem{definition}{Definition}
\theoremstyle{remark}
\newtheorem{remark}{Remark}
\newcommand{\R}{\mathbb{R}}
\DeclareMathOperator{\spa}{span}
\DeclareMathOperator{\Ric}{Ric}
\newcommand{\h}{H^1_V(\mathcal{M})}
\newcommand{\cM}{{\mathcal M}}
\newcommand{\N}{\mathbb N}
\DeclareSymbolFont{rsfs}{U}{rsfs}{m}{n}
\DeclareSymbolFontAlphabet{\mathscr}{rsfs}
\DeclareMathOperator*{\dist}{dist}
\newcommand{\gVol}{\mathrm{Vol}}
\begin{document}

\title{Multiple solutions for {S}chr\"{o}dinger equations on {R}iemannian manifolds via \(\nabla\)-theorems\footnote{The first and third author are supported by
    GNAMPA, project ``Equazioni alle derivate parziali: problemi e
    modelli.''}}

\author{Luigi Appolloni\thanks{Dipartimento di Matematica e
    Applicazioni, Universit\`a degli Studi di Milano Bicocca. Email:
    \href{mailto:l.appolloni1@campus.unimib.it}{l.appolloni1@campus.unimib.it}}
  \and Giovanni Molica Bisci\thanks{Dipartimento di Scienze Pure e
    Applicate, Universit\`{a} di Urbino Carlo Bo. Email:
    \href{mailto:giovanni.molicabisci@uniurb.it}{giovanni.molicabisci@uniurb.it}}
  \and Simone Secchi\thanks{Dipartimento di Matematica e Applicazioni,
    Universit\`a degli Studi di Milano Bicocca. Email:
    \href{mailto:simone.secchi@unimib.it}{simone.secchi@unimib.it}}}

\date{\DTMnow}
\maketitle

\begin{abstract}
	We consider a smooth, complete and non-compact Riemannian manifold $(\mathcal{M},g)$ of dimension $d \geq 3$, and we look for positive solutions to the semilinear elliptic equation
	\begin{displaymath}
		-\Delta_g w + V w = \alpha f(w) + \lambda w \quad\hbox{in $\mathcal{M}$}.
	\end{displaymath}
	The potential $V \colon \mathcal{M} \to \mathbb{R}$ is a continuous function which is coercive in a suitable sense, while the nonlinearity $f$ has a subcritical growth in the sense of Sobolev embeddings. By means of $\nabla$-Theorems introduced by Marino and Saccon, we prove that at least three solution exists as soon as the parameter $\lambda$ is sufficiently close to an eigenvalue of the operator $-\Delta_g$.
\end{abstract}

\section{Introduction} \label{sec:1}

The study of solutions to semilinear partial differential equations of Schr\"{o}dinger type is by far one of the richest field in Nonlinear Analysis, where Variational Methods and Critical Point Theory provide a powerful setting for existence results. The occurrence of more than one solution to such equations is guaranteed, at a basic level, by some symmetry condition together with the use of topological indices like the genus or the relative category. We refer to the classical monograph \cite{MR1411681} for a survey.

Semilinear elliptic equations of Schr\"{o}dinger type are typically set in the whole Euclidean space $\mathbb{R}^d$, $d \geq 3$, which has a rather poor geometric structure. Multiplicity results may then appear as a consequence of the presence of potential functions with suitable properties. The situation is much different if $\mathbb{R}^d$ is replaced by a more general Riemannian manifold $\mathcal{M}$, since the geometry of $\mathcal{M}$ may influence the existence of one or more solutions to the equation. Analysis on Manifolds and Geometric Analysis become the necessary language to work with these problems: we refer to \cite{MR2569498,MR2954043,LiuBo,zbMATH01225960,zbMATH01447265,zbMATH03808476} and to the references therein for an introduction. For the sake of brevity, we will assume that the reader is familiar with the basic definitions of Riemannian Geometry.

We will consider a $d$-dimensional smooth complete non-compact Riemannian manifold $\left( \cM,g \right)$ with $d\geq 3$. The aim of this paper is to study the existence of solutions for problem
\begin{equation}
\tag{$P_{\lambda}$}
\begin{cases}  - \Delta_g w +V(\sigma)w=\alpha(\sigma)f(w) + \lambda w & \mbox{in} \ \cM \\
  w\geq 0 & \mbox{in} \ \cM\\
  w(\sigma) \rightarrow 0 & \mbox{as} \ d_g(\sigma_0,\sigma)\rightarrow \infty,
\end{cases}
\end{equation}
where $\alpha \in L^1(\cM) \cap L^{\infty}(\cM) \setminus \lbrace 0 \rbrace$, $\alpha \geq 0$, $f\colon [0,+\infty) \to \R$ is a continuous function, $\lambda \in \R$ is a real parameter. 
We assume that $V\colon \cM \to \R$ is a continuous function such that
\begin{itemize}
\item[$(V_1)$] $\upsilon_0:= \inf_{\sigma \in \cM} V(\sigma) >0$;
\item[$(V_2)$] there is $\sigma_0 \in \cM$ such that
\[
\lim_{d_g(\sigma_0,\sigma)\to \infty} V(\sigma)= + \infty,
\]
\end{itemize}
where $d_g\colon \cM\times\cM\rightarrow [0,+\infty)$ is the distance associated to the Riemannian metric $g$. Finally, $\Delta_g$ denotes the Laplace-Beltrami operator.

The nonlinearity $f \colon [0,+\infty) \to \mathbb{R}$ satisfies $f(0)=0$ and
\begin{itemize}
\item[$(f_1)$]
\[
\lim_{t \to 0} \frac{f(t)}{|t|}=0;
\]
\item[$(f_2)$]
there results
\[
\lim_{t \to + \infty} \frac{f(t)}{|t|^{r-1}} < \infty
\]
where $r \in (2,2d/(d-2))$;
\item[$(f_3)$] set $F(t):= \displaystyle\int_0^t f(\tau)\, d \tau$. The map
\[
t \to f(t)t-2F(t)
\]
is non decreasing if $t >0$;
\item[$(f_4)$] $0<rF(t)<f(t)t$ for all $t > 0$.
\end{itemize}
This operator is defined in local coordinates by
\[
\Delta_g h= \sum_{i,j} \frac{1}{\sqrt{\strut \det g}} \frac{\partial}{\partial x^i} \left( g^{ij}\sqrt{\det g} \frac{\partial h}{\partial x^j} \right).
\]
We point out that we have defined $\Delta_g$ with the ``analyst's sign convention'', so that $-\Delta_g$ coincides with $-\Delta$ in $\mathbb{R}^d$ with its flat metric.

\medskip

To introduce the main assumption on the manifold $(\cM,d)$, we suppose that there exists a function $H\colon \left[0,\infty \right) \to \R$ of class $C^1$ such that
\[
\int_0^\infty t H(t) \, dt < \infty
\]
and
\begin{itemize}
\item[(Ric)] for some $\bar{\sigma}_0 \in \cM$ there results
\[
\Ric_{\left(\cM,g \right)}(\sigma) \geq (1-d) H(d_g(\bar{\sigma}_0, \sigma)).
\]
\end{itemize}
Moreover, we will assume throughout the paper that
\[
\inf_{\sigma \in \cM} \gVol_g \left( B_\sigma (1)\right)>0
\]
where
\[
B_\sigma (1):= \left\lbrace \xi \in \cM \mid \dist(\xi, \sigma) < 1 \right\rbrace
\]
and
\[
\gVol_g \left( B_\sigma (1)\right) :=\int_{B_\sigma(1)} \, dv_g.
\]
Since we want to prove a multiplicity result for ($P_\lambda$), a natural approach could be based on Morse Theore, see \cite{zbMATH00217439,zbMATH00042549}. Unfortunately Morse Theory requires in general more regularity of the Euler functional associated to the variational problem, and this would require a more regular nonlinearity $f$ in ($P_\lambda$).

We propose here a different approach via~$\nabla$-Theorems, a family of variational tools which were introduced by Marino and Saccon in \cite{MR1655535} to study the multiplicity of solutions of some asymptotically non-symmetric semilinear elliptic problems with jumping nonlinearities. More precisely, we will make use of the sphere-torus linking Theorem with mixed type assumptions (see \cite[Theorem 2.10]{MR1655535}). The main condition of this theorem can be roughly summarized in these terms: the Euler functional constrained on a closed subspace must not have critical values in a certain prescribed range with ``some uniformity''. A rigorous definition is as follows.
\begin{definition} \label{def:nablacondition}
Let $\mathcal{H}$ be an Hilbert space and $\mathcal{I} \colon \mathcal{H} \to \R$ a $C^1$ functional. Let also $\mathcal{X}$ be a closed subspace of $\mathcal{H}$, $a$, $b \in \R \cup \lbrace -\infty, \infty \rbrace$; we say that $\mathcal{I}$ satisfies the condition \(\left( \nabla \right)\left( \mathcal{I}, \mathcal{X},a,b \right)\)
if there exists $\gamma >0$ such that
\begin{displaymath}
	\inf \left\lbrace \Vert P_{\mathcal{X}} \nabla \mathcal{I}(w) \Vert \mid a \leq \mathcal{I}(w)\leq b, \ \dist(w,\mathcal{X}) \leq \gamma \right\rbrace >0
\end{displaymath}
where $P_{\mathcal{X}}\colon \mathcal{H} \to \mathcal{X}$ denotes the standard orthogonal projection. In the following we will refer to it as $(\nabla)$-condition for short.
\end{definition}
Therefore we need a suitable Hilbert space in which ($P_\lambda$) can be associated to the critical points of a $C^1$ functional $\mathcal{I}$. The variational setting is described in Section \ref{sec:2}.

$\nabla$-Theorems turned out to be a powerful tool when one is interested in studying the multiplicity of solutions for nonlinear equations. In particular, in \cite{MR1708175} Pistoia proved the existence of four solutions for a superlinear elliptic problem on a bounded domain of $\R^d$. At a later time, in the same spirit of the paper of Pistoia, Mugnai proved in \cite{MR2090280} the existence of three solutions for a superlinear boundary problem with a more general nonlinearity. $\nabla$-Theorems are useful also when one deal with problems with higher order operators as showed in \cite{MR1784811} by Micheletti, Pistoia and Saccon. It is also worth mentioning \cite{MR3656482} where Molica Bisci, Mugnai and Servadei showed the existence of three solutions for an equation driven by the fractional Laplacian on a bounded domain of $\R^d$ with Dirichlet condition and a general nonlinearity. When one draws his attentions to problems settled in unbounded domains, the situation is completely different. Indeed, in order to apply the sphere-torus linking Theorem it is necessary to split the space on which is defined the functional in three linear subspaces, two of them finite dimensional, while the third infinite dimensional. When $\Omega$ is a bounded domain of $\R^d$ it is well known that the embedding $H^1(\Omega) \hookrightarrow L^2(\Omega)$ is compact. As a consequence of that, the resolvent of the Schr\"odinger operator or the Laplacian is compact and with standard arguments it is possible to prove that the spectrum of these operators is discrete and that the eigenfunctions are dense in the space under considerations. So, a common approach to select the three subspaces is to consider the whole space as direct sum of eigenspaces. Unfortunately, this strategy fails in the case of unbounded domains since the spectrum of the Schr\"odinger operator or the Laplacian is not even discrete in general. A contribution in this direction was given by Tehrani in \cite{MR1954516} where the existence of two solution for the Nonlinear Schr\"odinger equation in $\R^d$. Following the characterization of the essential spectrum of a Schr\"odinger operator present in \cite{MR1186643}, they are able to decompose the space and apply the theorem. The drawback of their approach is that they don't give sufficient condition on the potential to ensure the existence of eigenvalues subsequent to the first one. A recent result was also obtained by Mugnai in \cite{MR2402920} proving the existence of at least two solutions for an equation in which the nonlinearity is allowed to have an exponential growth in $\R^2$. In the present paper we want to extend the results quoted previously in two directions. The first one is to give sufficient condition that will enable us to completely characterize the spectrum of the operator taken into account. Secondly, the problem we want to investigate is settled in a non compact Riemannian manifold and, as far as we know, results as the one we are going to prove are not present in literature. One of the first contribute for the Nonlinear Schr\"odinger equation on Riemannian manifold was given in \cite{MR4020749}, where Faraci and Farkas established a necessary and sufficient condition for the existence of non trivial solutions with hypothesis on the manifold equal to the ones we will assume. More recently, Molica Bisci and Secchi in \cite{MR3886596} showed the existence of at least two solutions for \eqref{eq:Plambda} requiring $\lambda$ large enough under our assumptions on $f$.

\bigskip

The main result of the paper is a multiplicity result for problem \eqref{eq:Plambda} whenever $\lambda$ is sufficiently close to an eigenvalue of $-\Delta_g$.
\begin{theorem} \label{th2}
Assume $f\colon \R \to \R$ and $V\colon \cM \to \R$ are continuous functions that verify respectively $(f_1)$ -- $(f_4)$ and $(V_1)$ -- $(V_2)$. Then, for every $\lambda_k$ eigenvalue of $-\Delta_g$, there is a $\mu >0$ such that for all $\lambda \in (\lambda_k-\mu, \lambda_k)$ problem $\eqref{eq:Plambda}$ admits at least three non-trivial and non-negative weak-solutions $w_1$, $w_2$ and $w_3$. Furthermore, these solutions belong to $L^\infty(\mathcal{M})$ and for each $i \in \lbrace 1,2,3 \rbrace$ there results
\begin{equation}
	\lim_{d_g(\sigma,\sigma_0) \to +\infty} w_i (\sigma) = 0.
	\label{eq:1.1}
\end{equation}
\end{theorem}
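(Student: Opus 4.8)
The plan is to obtain the three solutions as critical points of the energy functional
\[
\mathcal{I}_\lambda(w) = \frac{1}{2}\int_\cM \left( |\nabla_g w|^2 + V w^2\right)\,dv_g - \frac{\lambda}{2}\int_\cM w^2\,dv_g - \int_\cM \alpha(\sigma) F(w^+)\,dv_g
\]
on $\h$, equipped with the inner product $\langle u,v\rangle = \int_\cM\left(\langle \nabla_g u,\nabla_g v\rangle_g + Vuv\right)\,dv_g$, which by $(V_1)$ is equivalent to the standard one; the truncation $F(w^+)$ is used so that critical points turn out to be non-negative. Conditions $(f_1)$ and $(f_2)$ make $\mathcal{I}_\lambda$ of class $C^1$ with a compact nonlinear term. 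The structurally decisive first step is the spectral analysis of the linear operator $-\Delta_g+V$: exploiting the coercivity $(V_2)$ together with the curvature bound (Ric) and the uniform volume lower bound, one proves that the embedding $\h \hookrightarrow L^q(\cM)$ is compact for every $q \in [2, 2d/(d-2))$. Hence $-\Delta_g+V$ has compact resolvent, its spectrum is a sequence $0 < \lambda_1 < \lambda_2 \leq \cdots \to +\infty$ of eigenvalues, and the eigenfunctions form a Hilbert basis of $\h$. Fixing the eigenvalue $\lambda_k$, I would split
\[
\h = \mathbb{H}^- \oplus \mathbb{H}^0 \oplus \mathbb{H}^+,
\]
where $\mathbb{H}^0 = \ke(-\Delta_g + V - \lambda_k)$, $\mathbb{H}^-$ is the finite-dimensional span of the eigenfunctions with eigenvalue strictly below $\lambda_k$, and $\mathbb{H}^+$ is the closure of the span of those above $\lambda_k$.

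The core of the proof is to place $\mathcal{I}_\lambda$ in the setting of the sphere--torus linking theorem with mixed-type assumptions \cite[Theorem 2.10]{MR1655535}, with $\mathbb{H}^0\oplus\mathbb{H}^+$ playing the role of the closed subspace entering the $(\nabla)$-condition. For $\lambda$ just below $\lambda_k$ the quadratic part $Q_\lambda(w):=\|w\|^2-\lambda\|w\|_{L^2(\cM)}^2$ is negative definite on $\mathbb{H}^-$, positive but of size comparable to $\lambda_k-\lambda$ on $\mathbb{H}^0$, and positive definite with a spectral gap on $\mathbb{H}^+$. Using $(f_1)$ to control the nonlinearity near the origin and the Ambrosetti--Rabinowitz condition $(f_4)$ to control it at infinity, I would build the linking configuration --- a sphere in $\mathbb{H}^0\oplus\mathbb{H}^+$ on which $\mathcal{I}_\lambda$ stays above a positive level $b$, and a solid torus modelled on $\mathbb{H}^-\oplus\mathbb{H}^0$ on whose boundary $\mathcal{I}_\lambda$ stays below a level $a<b$ --- and then fix $\mu>0$ small enough that these strict inequalities survive for every $\lambda\in(\lambda_k-\mu,\lambda_k)$. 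The compactness of the embedding enters in two separate verifications: the Palais--Smale condition (boundedness of the sequences via $(f_4)$ and $(f_3)$, then strong convergence by compactness) and the condition $(\nabla)(\mathcal{I}_\lambda,\mathbb{H}^0\oplus\mathbb{H}^+,a,b)$, which demands that $\|P_{\mathbb{H}^0\oplus\mathbb{H}^+}\nabla\mathcal{I}_\lambda(w)\|$ stay bounded away from zero on the energy strip near that subspace.

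Granting the linking geometry and these two compactness-type conditions, \cite[Theorem 2.10]{MR1655535} produces nontrivial critical points at the linking levels; arguing as in \cite{MR1708175,MR2090280} --- combining the two min--max levels furnished by the sphere--torus linking with the local linking of $\mathcal{I}_\lambda$ at the origin (a saddle, since $Q_\lambda<0$ on $\mathbb{H}^-$ while $Q_\lambda>0$ on $\mathbb{H}^0\oplus\mathbb{H}^+$) --- one extracts three distinct nontrivial critical points $w_1,w_2,w_3$. It then remains to read off their qualitative properties. Non-negativity follows from the truncation: testing the equation against $w_i^-$ and using $\alpha\geq0$ and $f(0)=0$ leaves only the linear part on $\{w_i<0\}$, and a maximum-principle/unique-continuation argument forces $w_i^-=0$. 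Since each $w_i\in\h\subset L^{2d/(d-2)}(\cM)$ and $\alpha\in L^\infty(\cM)$ with $f$ subcritical, a Brezis--Kato/Moser iteration --- valid uniformly on unit balls thanks to (Ric) and $\inf_\sigma\gVol_g(B_\sigma(1))>0$ --- upgrades $w_i$ to $L^\infty(\cM)$. Finally, outside a large ball one has $V-\lambda\geq1$, so each $w_i$ is a subsolution of a coercive equation there, and combining the uniform local estimate with the vanishing of the $L^{2d/(d-2)}$-tail yields \eqref{eq:1.1}.

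The main obstacle is precisely the verification of the $(\nabla)$-condition together with the uniform construction of the linking sets on the non-compact manifold $\cM$: on a bounded domain one simply decomposes the space through the discrete spectrum of the resolvent, whereas here the discreteness of the spectrum --- and hence the very existence of the subsequent eigenvalue $\lambda_k$ and of the decomposition $\mathbb{H}^-\oplus\mathbb{H}^0\oplus\mathbb{H}^+$ --- must be earned from the coercivity of $V$ through the compact embedding, which is also what ultimately makes both the Palais--Smale and the $(\nabla)$ conditions hold.
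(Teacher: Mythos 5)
Your outline follows the paper's architecture closely --- the same functional, the same spectral decomposition (your $\mathbb{H}^-$, $\mathbb{H}^0$, $\mathbb{H}^+$ are the paper's $X_1=E_{k-1}$, $X_2=\spa\lbrace e_k,\ldots,e_{k+h}\rbrace$, $X_3=E_{k+h}^\perp$), the compact embedding, the Palais--Smale condition, the sphere--torus geometry, and the same regularity/decay argument via \cite{MR4020749} --- but it attaches the $(\nabla)$-condition to the wrong subspace, and that subspace choice is precisely where the substance of the proof lies. The sphere--torus theorem of Marino and Saccon \cite[Theorem 2.10]{MR1655535} requires the $(\nabla)$-condition on $\mathbb{H}^-\oplus\mathbb{H}^+$, i.e.\ on the subspace obtained by \emph{removing} the resonant eigenspace $\mathbb{H}^0$, which is transverse to both linking sets; this is what the paper verifies in Propositions \ref{prop3}--\ref{prop 5}, namely $(\nabla)(J_\lambda,X_1\oplus X_3,\eta',\eta'')$. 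You impose the condition on $\mathbb{H}^0\oplus\mathbb{H}^+$ instead, so the theorem you invoke does not apply to the configuration you set up.

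This is not a slip that can be patched by renaming, because on $\mathbb{H}^0\oplus\mathbb{H}^+$ the condition genuinely fails in the energy range where the theorem would need it. For $\lambda<\lambda_k$ the restriction of $\mathcal{I}_\lambda$ to $\mathbb{H}^0\oplus\mathbb{H}^+=E_{k-1}^\perp$ has mountain-pass geometry: the origin is a strict local minimum (this is the content of Proposition \ref{prop2}) and $(f_4)$ forces $\mathcal{I}_\lambda\to-\infty$ along rays. Its mountain-pass level $c_\lambda$ satisfies $\inf_{S_\rho}\mathcal{I}_\lambda\leq c_\lambda\leq\sup_{t\geq 0}\mathcal{I}_\lambda(te_k)\leq\sup_{E_{k+h}}\mathcal{I}_\lambda$, so $c_\lambda$ sits exactly inside the linking interval that the strip $[a,b]$ of the $(\nabla)$-theorem must cover; and Ekeland's principle applied to the restricted functional produces points $w_j\in\mathbb{H}^0\oplus\mathbb{H}^+$ with $\mathcal{I}_\lambda(w_j)\to c_\lambda$ and $P_{\mathbb{H}^0\oplus\mathbb{H}^+}\nabla\mathcal{I}_\lambda(w_j)\to 0$, which destroys the required uniform lower bound on the projected gradient. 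The paper's choice works for the opposite reason: on $X_1\oplus X_3$, for $\lambda$ in a compact subinterval of the gap $(\lambda_{k-1},\lambda_{k+h+1})$, the quadratic form is uniformly hyperbolic (Lemma \ref{lemma3} gives a negative gap on $X_1$ and a positive gap on $X_3$), so testing the constrained equation with $w_1-w_3$ shows that the only constrained critical point at energies near $0$ is the trivial one (Proposition \ref{prop3}), and a compactness argument upgrades this to the uniform $(\nabla)$-condition (Propositions \ref{prop4} and \ref{prop 5}). Two smaller discrepancies: in the paper it is the collapse $\sup_{E_{k+h}}J_\lambda\to 0$ as $\lambda\to\lambda_k$ (Lemma \ref{lemma5}) --- not the persistence of the linking inequalities --- that dictates how small $\mu$ must be; and the third solution is obtained from the classical linking theorem and separated from the other two by comparing energy levels with $\sup_{E_{k+h}}J_\lambda$, rather than from a local-linking argument at the origin.
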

The proof of the previous Theorem is based on a precise description of the spectral properties of the operator $-\Delta_g+V$ which governs ($P_\lambda$).  In Section \ref{sec:2} we prove in detail these properties, since they seem to be new in the setting of a non-compact manifold $\mathcal{M}$.
\begin{remark} \label{rem:1.3}
	The boundedness of our solutions and their decay at infinity \eqref{eq:1.1} follow from \cite[Theorem 3.1]{MR4020749}. This remark applies to the eigenfunctions considered in Section \ref{sec:2} as well.
\end{remark}

\medskip

To the best of our knowledge, our results are new even in the euclidean case $\mathcal{M} = \mathbb{R}^d$, $d \geq 3$. In this case, our assumptions on $V$ can be relaxed, and we can rely on some conditions introduced in \cite{zbMATH03605308} which ensure both the discreteness of the spectrum of the operator $-\Delta + V$ and the necessary compact embedding of the Sobolev space $H_V^1(\mathbb{R}^d)$. In our setting, the compactness of the embedding of $H_V^1(\cM)$ into $L^p(\cM)$ for all $p \in [2,2^*)$ follows from \cite[Lemma 2.1]{MR4020749}. As a concrete example we propose the following result.
\begin{theorem} \label{th3}
Assume $V\colon \cM \to \R$ is a function in $L^\infty_{\mathrm{loc}}(\mathbb{R}^d)$ which verifies
$V(x) \geq V_0 >0$ for almost every $x \in \mathbb{R}^d$ and
\begin{displaymath}
	\lim_{\vert x \vert \to +\infty} \int_{B_1(x)} \frac{dy}{V(y)} = 0.
\end{displaymath}
Then the same conclusions as in Theorem \ref{th2} hold for
\begin{displaymath}
\begin{cases}  
	- \Delta_g w +V(x)w=\frac{1}{\left(1+\vert x \vert^d \right)^2}   \vert w \vert^r + \lambda w & \mbox{in} \ \cM \\
  w\geq 0 & \mbox{in} \ \cM\\
  w(x) \rightarrow 0 & \mbox{as} \ \vert x \vert \rightarrow \infty,
\end{cases}
\end{displaymath}
where $2 < r < 2d / (d-2)$.
\end{theorem}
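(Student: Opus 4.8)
The plan is to deduce Theorem~\ref{th3} from Theorem~\ref{th2} by checking that all the hypotheses of the latter are met when $\cM=\R^d$ is endowed with its flat metric, the only genuinely new point being that the coercivity conditions $(V_1)$--$(V_2)$ are replaced by the integral decay of $1/V$. I would begin by isolating \emph{where} $(V_1)$--$(V_2)$ are actually used in the proof of Theorem~\ref{th2}: they enter only through three facts, namely that $H^1_V(\R^d)$ is a well-defined Hilbert space on which the quadratic form of $-\Delta_g+V$ is coercive, that the spectrum of $-\Delta_g+V$ is discrete (so that the eigenvalues $\lambda_k$ exist), and that the embedding $H^1_V(\R^d)\hookrightarrow L^p(\R^d)$ is compact for every $p\in[2,2^*)$. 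It therefore suffices to recover these three facts from the weaker hypotheses of Theorem~\ref{th3}.

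The geometric hypotheses are immediate for flat $\R^d$. Since $\Ric_{(\R^d,g)}\equiv 0$ and $1-d<0$, condition (Ric) holds with the choice $H\equiv 0$, which is of class $C^1$ and trivially satisfies $\int_0^\infty tH(t)\,dt=0<\infty$. Moreover every unit ball $B_\sigma(1)$ has the same Euclidean volume $\omega_d>0$, so $\inf_{\sigma}\gVol_g(B_\sigma(1))=\omega_d>0$. Thus the manifold side of the assumptions of Theorem~\ref{th2} is satisfied verbatim.

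For the potential, the bound $V\ge V_0>0$ a.e.\ guarantees that $\int_{\R^d}(|\nabla w|^2+Vw^2)\,dx$ is an equivalent Hilbert norm controlling the $H^1$ norm, so $H^1_V(\R^d)$ is well defined. The crucial step---and the one I expect to be the main obstacle---is to upgrade this to the discreteness of the spectrum of $-\Delta+V$ and to the \emph{compactness} of $H^1_V(\R^d)\hookrightarrow L^p(\R^d)$ for $p\in[2,2^*)$, now that $V$ is no longer assumed to diverge at infinity. This is exactly the role of the condition $\lim_{|x|\to\infty}\int_{B_1(x)}dy/V(y)=0$: following \cite{zbMATH03605308}, this integral decay forces the resolvent of $-\Delta+V$ to be compact, hence the spectrum to be discrete, and it yields the required compact embedding (which in the setting of Theorem~\ref{th2} was instead obtained from \cite[Lemma 2.1]{MR4020749}). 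Once these replacements are in place, the decomposition of $H^1_V(\R^d)$ into the three subspaces dictated by the sphere--torus linking theorem proceeds unchanged.

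It then remains to verify the data $\alpha$ and $f$. The weight $\alpha(x)=(1+|x|^d)^{-2}$ is nonnegative and not identically zero, satisfies $0<\alpha\le 1$ so that $\alpha\in L^\infty(\R^d)$, and, since its integrand decays like $|x|^{-2d}$ at infinity, one has $\alpha\in L^1(\R^d)$; hence $\alpha\in L^1(\R^d)\cap L^\infty(\R^d)\setminus\{0\}$. The power nonlinearity $f(t)=|t|^r$ is checked directly against $(f_1)$--$(f_4)$: $(f_1)$ holds because $r>1$; $(f_2)$ records the subcritical polynomial growth enforced by $r<2d/(d-2)$; while $(f_3)$ and $(f_4)$ follow from the homogeneity of the power, the strict inequality in $(f_4)$ being a consequence of $rF(t)=\tfrac{r}{r+1}|t|^{r+1}<|t|^{r+1}=f(t)t$. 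With all the hypotheses of Theorem~\ref{th2} verified, the existence of the three nontrivial nonnegative solutions follows, and their boundedness together with the decay \eqref{eq:1.1} is inherited from \cite[Theorem 3.1]{MR4020749}, as recorded in Remark~\ref{rem:1.3}.
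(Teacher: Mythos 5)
Your overall strategy is exactly the route the paper intends: Theorem \ref{th3} is presented there as a direct corollary of Theorem \ref{th2}, with the conditions of \cite{zbMATH03605308} supplying the discreteness of the spectrum of $-\Delta+V$ and the compactness of $H^1_V(\mathbb{R}^d)\hookrightarrow L^p(\mathbb{R}^d)$ for $p\in[2,2^*)$, which in the manifold setting came from $(V_1)$--$(V_2)$. Your treatment of the geometric hypotheses (take $H\equiv 0$, constant volume of unit balls) and of the weight $\alpha(x)=(1+|x|^d)^{-2}$ is also correct.

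However, your verification of $(f_2)$ is wrong, and this is a genuine gap. For $f(t)=|t|^r$ one has $f(t)/|t|^{r-1}=|t|\to+\infty$ as $t\to+\infty$, so $(f_2)$ fails with the exponent $r$; the restriction $r<2d/(d-2)$ does not help, because $(f_2)$ demands growth of order $r-1$, not $r$. Nor can the argument be repaired by invoking Theorem \ref{th2} with a different framework exponent $s$: boundedness of $f(t)/t^{s-1}$ at infinity forces $s\ge r+1$ (which already requires $r+1<2^*$, i.e. $r<(d+2)/(d-2)$, strictly stronger than the hypothesis of Theorem \ref{th3}), and then $sF(t)=\tfrac{s}{r+1}t^{r+1}\ge t^{r+1}=f(t)t$, so the strict inequality in $(f_4)$ --- which you verified only for the exponent $r$ --- fails. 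In short, for a pure power the two requirements are incompatible: $(f_2)$ forces the framework exponent to be at least $r+1$, while $(f_4)$ forces it to be strictly less than $r+1$. (For $r>(d+2)/(d-2)$ matters are worse: $F(w)\sim |w|^{r+1}$ is then supercritical and $J_\lambda$ need not even be well defined on $H^1_V$.) This incompatibility is in fact inherited from the paper itself, which states Theorem \ref{th3} without proof; the natural repair is to read the nonlinearity as $|w|^{r-2}w$, i.e. $f(t)=t^{r-1}$ for $t\ge 0$ and $F(t)=t^r/r$, for which $(f_1)$, $(f_2)$, $(f_3)$ hold with the exponent $r$, while $(f_4)$ holds only as the equality $rF(t)=f(t)t$ --- so one must additionally check that the strictness of $(f_4)$ is never used in the proofs of Propositions \ref{prop1}--\ref{prop 5} (or weaken $(f_4)$ to a non-strict inequality). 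By asserting $(f_2)$ without computing it, your write-up passes over exactly the point where the deduction breaks down.
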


\section{Spectral properties of $-\Delta_g + V$} \label{sec:2}

In this section we are interested in studying the eigenvalues problem
\begin{equation} \label{eq:Plambda}
\begin{cases}  - \Delta_g w +V(\sigma)w=\lambda w & \mbox{in} \ \cM \\
  w\geq 0 & \mbox{in} \ \cM\\
  w(\sigma) \rightarrow 0 & \mbox{as} \ d_g(\sigma_0,\sigma)\rightarrow \infty.
\end{cases}
\end{equation}
We assume that $V\colon \cM \to \R$ is a continuous function such that
\begin{itemize}
\item[$(V_1)$] $\upsilon_0:= \inf_{\sigma \in \cM} V(\sigma) >0$;
\item[$(V_2)$] there is $\sigma_0 \in \cM$ such that
\[
\lim_{d_g(\sigma_0,\sigma)\to \infty} V(\sigma)= + \infty.
\]
\end{itemize}

We denote with $H^1_g(\cM)$ the Sobolev space obtained as the closure of $C^\infty(\mathcal{M})$ with respect to the norm
\[
\Vert w \Vert_g:= \left( \int_{\cM} \left| \nabla_g w (\sigma)\right|^2 \, dv_g +\int_{\cM}\left| w(\sigma) \right|^2\, dv_g \right)^{\frac{1}{2}},
\]
and with
\[
H_V^1(\cM):= \lbrace w \in H^1_g(\cM) \mid \Vert w \Vert^2 < \infty \rbrace
\]
where
\[
\Vert w \Vert:=  \left( \int_{\cM} \left| \nabla_g w (\sigma)\right|^2 \, dv_g +\int_{\cM} V(\sigma) \left| w(\sigma) \right|^2\, dv_g \right)^{1/2}
\]
induced by the scalar product
\[
\langle w_1,w_2\rangle:= \int_{\cM} \langle \nabla_g w_1(\sigma), \nabla_g w_2(\sigma) \rangle_g \, dv_g +\int_{\cM} V(\sigma) w_1(\sigma) w_2(\sigma) \, dv_g.
\]
We recall that under the assumptions we made on the potential and the manifold, the embedding
\(
H^1_V(\cM) \hookrightarrow L^q(\cM)
\)
is continuous for any $q \in \left[2,2^*\right]$ and compact for any $q \in \left[2, 2^* \right)$. We will say that a function $w \in H_V^1(\cM)$ is a weak solution for problem $\eqref{eq:Plambda}$ if
\begin{equation} \label{eq1}
 \int_{\cM} \langle \nabla_g w(\sigma), \nabla_g \varphi(\sigma) \rangle_g \, dv_g +\int_{\cM} V(\sigma) w(\sigma) \varphi(\sigma) \, dv_g= \lambda \int_{\cM} w(\sigma) \varphi(\sigma) \, dv_g
\end{equation}
for any $\varphi \in H^1_V(\cM) $, which can be written in a more compact way as
\[
\langle w, \varphi \rangle = \lambda \langle w,\varphi \rangle_{L^2(\cM)}.
\]
The condition $w(\sigma) \to 0$ as $d_g(\sigma,\sigma_0) \to +\infty$ follows from Remark \ref{rem:1.3}.
\begin{lemma} \label{lemma1}
Let $X_\star \subset H^1_V(\cM)$ be a weakly closed subspace. Set
\begin{equation*}
Y_\star = \lbrace w \in X_\star \mid \Vert w \Vert_{L^2(\cM)}=1 \rbrace, \quad
J(w) = \frac{1}{2} \|w\|^2.
\end{equation*}
Then there exists $w_\star \in Y_\star$ such that
\begin{equation} \label{eq2}
\min_{w \in Y_\star} J(w)=J(w_\star).
\end{equation}
Moreover, letting $\lambda_\star:=2 J(w_\star)$ we have
\begin{equation} \label{eq4}
\int_{\cM} \langle \nabla_g w_\star(\sigma), \nabla_g \varphi(\sigma) \rangle_g \, dv_g +\int_{\cM} V(\sigma) w_\star(\sigma) \varphi(\sigma) \, dv_g= \lambda_\star \int_{\cM} w_\star(\sigma) \varphi(\sigma) \, dv_g
\end{equation}
for any $\varphi \in X_\star$.
\end{lemma}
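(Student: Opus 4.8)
The plan is to treat \eqref{eq2} as a constrained minimization problem solved by the direct method, and then to extract \eqref{eq4} from a normalized variation. First I would check that the problem is well posed. Since $\upsilon_0 = \inf_{\cM} V > 0$, every $w \in Y_\star$ satisfies $\|w\|^2 \geq \upsilon_0 \|w\|_{L^2(\cM)}^2 = \upsilon_0$, so $J$ is bounded below on $Y_\star$ by $\upsilon_0/2 > 0$; moreover $Y_\star \neq \emptyset$ whenever $X_\star \neq \{0\}$, since any nonzero element of $X_\star$ can be normalized in $L^2(\cM)$. I would then pick a minimizing sequence $(w_n) \subset Y_\star$ with $J(w_n) \to \inf_{Y_\star} J =: \lambda_\star/2$. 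Because $\|w_n\|^2 = 2 J(w_n)$ is bounded, $(w_n)$ is bounded in $H^1_V(\cM)$, and by reflexivity I may assume $w_n \weakto w_\star$ in $H^1_V(\cM)$ along a subsequence. As $X_\star$ is weakly closed, $w_\star \in X_\star$.

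Next I would show the infimum is attained at $w_\star$. The compact embedding $H^1_V(\cM) \hookrightarrow L^2(\cM)$ recalled above (from \cite[Lemma 2.1]{MR4020749}) is decisive here: it yields $w_n \to w_\star$ strongly in $L^2(\cM)$, whence $\|w_\star\|_{L^2(\cM)} = \lim_n \|w_n\|_{L^2(\cM)} = 1$, so that $w_\star \in Y_\star$ and in particular $w_\star \neq 0$. Since $J = \tfrac{1}{2}\|\cdot\|^2$ is convex and continuous, it is weakly lower semicontinuous, so $J(w_\star) \leq \liminf_n J(w_n) = \inf_{Y_\star} J$; combined with $w_\star \in Y_\star$ this gives $J(w_\star) = \min_{Y_\star} J$, proving \eqref{eq2} with $\lambda_\star = 2J(w_\star) = \|w_\star\|^2$.

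Finally, to obtain \eqref{eq4} I would use a Lagrange-multiplier argument in explicit form. Fixing $\varphi \in X_\star$, for $|t|$ small the curve $t \mapsto (w_\star + t\varphi)/\|w_\star + t\varphi\|_{L^2(\cM)}$ is well defined (as $\|w_\star\|_{L^2(\cM)} = 1$) and stays in $Y_\star$, since $w_\star + t\varphi \in X_\star$. Writing $N(t) = \|w_\star + t\varphi\|^2$ and $D(t) = \|w_\star + t\varphi\|_{L^2(\cM)}^2$, the scalar map $t \mapsto \tfrac{1}{2} N(t)/D(t)$ attains a minimum at $t = 0$. Differentiating and using $N(0) = \lambda_\star$, $D(0) = 1$, $N'(0) = 2\langle w_\star, \varphi\rangle$ and $D'(0) = 2\langle w_\star, \varphi\rangle_{L^2(\cM)}$, the vanishing of the derivative at $t=0$ gives exactly $\langle w_\star, \varphi\rangle = \lambda_\star \langle w_\star, \varphi\rangle_{L^2(\cM)}$ for every $\varphi \in X_\star$, which is \eqref{eq4}.

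The only genuinely delicate point is the preservation of the constraint $\|w_\star\|_{L^2(\cM)} = 1$ under the weak limit, and hence the nontriviality of $w_\star$: without it the weak limit could collapse to $0$ and the Rayleigh-type quotient would be meaningless. This is precisely where the compactness of the embedding into $L^2(\cM)$ enters, and it is the structural reason the coercivity hypotheses $(V_1)$--$(V_2)$ on the potential cannot be dispensed with.
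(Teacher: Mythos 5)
Your proof is correct and takes essentially the same approach as the paper's: the direct method (bounded minimizing sequence, weak convergence, the compact embedding $H^1_V(\cM) \hookrightarrow L^2(\cM)$ to preserve the constraint $\Vert w_\star \Vert_{L^2(\cM)}=1$, weak lower semicontinuity of the norm), followed by a variation along the normalized curve $(w_\star + t\varphi)/\Vert w_\star + t\varphi \Vert_{L^2(\cM)}$ in $Y_\star$. Your explicit quotient-rule differentiation of $N(t)/(2D(t))$ is just a cleaner rendering of the paper's first-order expansion with $o(\varepsilon)$ terms; the underlying variational step is identical.
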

\begin{proof}
Let $(w_j)_j$ be a minimizing sequence for problem $\eqref{eq2}$, i.e.
\[
J(w_j) \to \inf_{w \in Y_\star} J(w) \geq 0 \quad \mbox{as $j \to \infty$}.
\]
Clearly, by definition of $J$, the sequence $\left( \Vert w_j \Vert \right)_j$ is bounded and up to extract a subsequence we can assume $w_j\rightharpoonup w_\star$ in $H_V^1(\cM)$. Since the embedding $H^1_V(\cM) \hookrightarrow L^2(\cM) $ is compact, we have that --- up to a subsequence ---
\[
w_j \to w_\star \quad \mbox{in} \ L^2(\cM) \quad  \mbox{as $j \to \infty$}.
\]
As a consequence $\Vert w_\star \Vert_{L^2(\cM))}=1$, and together with the weakly closedness of $X_\star$ implies $w_\star \in Y_\star$. Now, exploiting the weakly lower semicontinuity of the norm, we get
\begin{equation} \label{eq3}
J(w_\star) \leq \liminf_{j \to \infty} J(w_j)=\inf_{w \in Y_\star} J(w).
\end{equation}
Since~$w_\star \in Y_\star$, it follows from \eqref{eq3} that
\(
J(w_\star)=\min_{w \in Y_\star} J(w).
\)
In order to prove~$\eqref{eq4}$ we fix $\varepsilon \in \left(-1,1 \right)\setminus \lbrace 0 \rbrace$ and $\varphi \in X_\star$. We set  $w_\varepsilon:= (w_\star + \varepsilon \varphi)/ \Vert w_\star + \varepsilon \varphi \Vert_{L^2(\cM)}$, and we notice that $w_\varepsilon \in Y_\star$. Now, we observe that
\begin{equation}\label{eq5}
\Vert w_\star + \varepsilon \varphi \Vert_{L^2(\cM)}^2= \Vert w_\star \Vert_{L^2(\cM)}^2+2\varepsilon \int_{\cM} w_\star (\sigma) \varphi (\sigma) \, dv_g+o(\varepsilon)
\end{equation}
and
\begin{equation} \label{eq6}
\Vert w_\star+\varepsilon \varphi \Vert^2= \Vert w_\star \Vert^2+2 \varepsilon \langle w_\star, \varphi \rangle+o(\varepsilon).
\end{equation}
Putting together \eqref{eq5}, \eqref{eq6} and $\Vert w_\star \Vert_{L^2(\cM)}=1$, we get
\begin{align*}
2J(w_\star)&= \frac{\Vert w_\star \Vert^2+2 \varepsilon \langle w_\star, \varphi \rangle+o(\varepsilon)}{1+\int_{\cM} w_\star (\sigma) \varphi (\sigma) \, dv_g+o(\varepsilon)} \\
& =\left( \Vert w_\star \Vert^2+2 \varepsilon \langle w_\star, \varphi \rangle+o(\varepsilon) \right)\left( 1-\int_{\cM} w_\star (\sigma) \varphi (\sigma) \, dv_g+o(\varepsilon)  \right)\\
&= 2 J(w_\star) + 2\varepsilon \left( \langle w_\star, \varphi \rangle-2 J(w_\star) \int_\cM w_\star (\sigma) \varphi (\sigma) \, dv_g \right)+o(\varepsilon).
\end{align*}
At this point, since $w_\star$ is a minimum, we have that
\[
\lim_{\varepsilon \to 0} \frac{J(w_\varepsilon)-J(w_\star)}{\varepsilon}=0,
\]
but this is possible only if
\[
\langle w_\star, \varphi \rangle-2 J(w_\star) \int_\cM w_\star(\sigma) \varphi(\sigma) \, dv_g=0.
\]
\end{proof}
\begin{remark} \label{rem1}
Let $S:=\lbrace w_1,\ldots,w_k\rbrace$ where $k \in \N$ and and $w_i \in H^1_V (\cM) $ for any $i \in \lbrace 1,...,k\rbrace$. It is easy to verify that the subspace
\[
X_\star:=\lbrace w \in H^1_V(\cM) \, : \, \langle w, w_i \rangle=0 \quad \mbox{for } i=1,...,k \rbrace
\]
is weakly closed. Indeed, $\tilde{w} \in X_\star$ can be written as
\[
\tilde{w} = \sum_{j=1}^\infty c_j \tilde{w}_j
\]
where $\tilde{w}_j \in X_\star$ and $c_j \in \R$ for any $j \in \N$. With an easy computation we have
\[
\langle \tilde{w}, w_i \rangle=\sum_{j=1}^\infty c_j \langle \tilde{w_j}, w_i \rangle=0
\]
for any $ i \in \lbrace1,...,k \rbrace$.
\end{remark}
\begin{lemma} \label{lemma2}
If $\lambda \neq \tilde{\lambda}$ are distinct eigenvalues and $e$, $\tilde{e} \in H^1_V(\cM)$ are corresponding eigenfunctions, then
\[
\langle e, \tilde{e} \rangle=\int_\cM e(\sigma) \tilde{e}(\sigma) \, dv_g = 0.
\]
\end{lemma}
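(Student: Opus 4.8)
The plan is to exploit the symmetry of the two bilinear forms $\langle \cdot, \cdot \rangle$ and $\langle \cdot, \cdot \rangle_{L^2(\cM)}$ together with the weak formulation \eqref{eq1}, in the classical style of the self-adjointness orthogonality argument. Since $e \in H^1_V(\cM)$ is an eigenfunction associated with $\lambda$, I would first test \eqref{eq1} with the admissible choice $\varphi = \tilde{e} \in H^1_V(\cM)$ to obtain
\[
\langle e, \tilde{e} \rangle = \lambda \int_\cM e(\sigma)\, \tilde{e}(\sigma)\, dv_g.
\]
Symmetrically, writing the weak formulation for $\tilde{e}$ (its eigenvalue being $\tilde{\lambda}$) and testing with $\varphi = e \in H^1_V(\cM)$ gives
\[
\langle \tilde{e}, e \rangle = \tilde{\lambda} \int_\cM \tilde{e}(\sigma)\, e(\sigma)\, dv_g.
\]

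The key step is then to subtract these two identities. Because both the $H^1_V(\cM)$ scalar product and the $L^2(\cM)$ scalar product are symmetric, the left-hand sides coincide and cancel, leaving
\[
0 = (\lambda - \tilde{\lambda}) \int_\cM e(\sigma)\, \tilde{e}(\sigma)\, dv_g.
\]
As $\lambda \neq \tilde{\lambda}$ by hypothesis, the scalar factor is nonzero, so this forces $\int_\cM e\, \tilde{e}\, dv_g = 0$, which is the asserted $L^2$-orthogonality. Feeding this back into the first identity immediately yields $\langle e, \tilde{e} \rangle = \lambda \cdot 0 = 0$, i.e.\ the $H^1_V(\cM)$-orthogonality, completing the proof.

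I do not anticipate any serious obstacle here: the argument is entirely algebraic once the weak formulation is in hand. The only point deserving a word of care is the admissibility of the test functions, namely that each eigenfunction may legitimately be inserted into the other's weak equation; this is guaranteed because $e, \tilde{e} \in H^1_V(\cM)$ and \eqref{eq1} holds for \emph{every} $\varphi \in H^1_V(\cM)$. No compactness, regularity, or decay information is needed for this statement, so the lemma follows directly from the variational identity.
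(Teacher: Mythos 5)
Your proposal is correct and follows exactly the paper's own argument: test each eigenfunction's weak formulation \eqref{eq1} with the other eigenfunction, subtract using the symmetry of the two scalar products to get $(\lambda - \tilde{\lambda})\int_\cM e\,\tilde{e}\,dv_g = 0$, conclude $L^2$-orthogonality since $\lambda \neq \tilde{\lambda}$, and substitute back to obtain $\langle e, \tilde{e}\rangle = 0$. Nothing is missing.
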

\begin{proof}
We consider $e$ as a weak solution of \eqref{eq:Plambda} and we act on \eqref{eq1} with $\tilde{e}$ obtaining
\begin{equation} \label{eq7}
\langle e, \tilde{e} \rangle=\lambda \int_\cM e(\sigma) \tilde{e}(\sigma) \, dv_g.
\end{equation}
Similarly, inverting the r\^{o}le of the eigenfunctions we get
\begin{equation} \label{eq8}
\langle \tilde{e}, e \rangle=\tilde{\lambda} \int_\cM  \tilde{e}(\sigma) e(\sigma) \, dv_g.
\end{equation}
From \eqref{eq7} and \eqref{eq8}, being the scalar product symmetric,  we can deduce
\[
(\lambda-\tilde{\lambda}) \int_\cM e(\sigma) \tilde{e} (\sigma) \, dv_g=0,
\]
thus
\begin{equation} \label{eq10}
\int_\cM e(\sigma) \tilde{e} (\sigma) \, dv_g=0.
\end{equation}
At this point, substituting \eqref{eq10} in \eqref{eq7} we also obtain
\(
\langle e, \tilde{e} \rangle=0
\)
as desired.
\end{proof}

We collect here the main spectral properties of problem \eqref{eq:Plambda}.
\begin{theorem} \label{th1}
The following statements hold true:
\begin{itemize}
\item[$(a)$] the smallest eigenvalue of problem \eqref{eq:Plambda} is positive and it can be characterized as
\begin{equation} \label{eq 11}
\lambda_1:= \min_{\substack{w \in H^1_V(\cM) \\  \Vert w \Vert_{L^2(\cM)}=1}} \Vert w \Vert^2
\end{equation}
or analogously
\[
\lambda_1:= \min_{w \in H^1_V(\cM) \setminus \lbrace 0 \rbrace} \frac{\Vert w \Vert^2}{\Vert w \Vert^2_{L^2(\cM)}};
\]
\item[$(b)$] there is a nonnegative eigenfunction $e_1 \in H^1_V(\cM)$ that is an associated eigenfunction to $\lambda_1$ where the minimum in \eqref{eq 11} is attained. Moreover, $\Vert e_1 \Vert_{L^2(\cM)}=1$ and $\lambda_1=\Vert e_1 \Vert^2$;
\item[$(c)$] the eigenvalue $\lambda_1$ is simple, i.e. if $w \in H^1_V(\cM)$ is such that
\begin{equation*}
\int_{\cM} \langle \nabla_g w_\star(\sigma), \nabla_g \varphi(\sigma) \rangle_g \, dv_g +\int_{\cM} V(\sigma) w_\star(\sigma) \varphi(\sigma) \, dv_g= \lambda_\star \int_{\cM} w_\star(\sigma) \varphi(\sigma) \, dv_g
\end{equation*}
for any $\varphi \in \h$ then there exists $\xi \in \R$ such that $w = \xi e_1$;
\item[$(d)$] the set of eigenvalues of problem \eqref{eq:Plambda} can be arranged into a sequence $(\lambda_k)_k$ such that
\[
\lambda_1<\lambda_2 \leq \lambda_3\leq...\leq \lambda_k \leq \lambda_{k+1} \leq...
\]
where $\lim_{k \to \infty} \lambda_k = +\infty$. Moreover, every eigenvalue can be characterized as
\begin{equation} \label{eq23}
\lambda_{k+1}:=\min_{\substack{w \in E_k^\bot \\ \Vert w \Vert_{L^2(\cM)}=1}} \Vert w \Vert
\end{equation}
or equivalently
\[
\lambda_{k+1}:=\min_{w \in E_k^\bot} \frac{\Vert w \Vert}{\Vert w \Vert_{L^2(\cM)}^2}
\]
where
\[
E_k:=\spa \lbrace e_1,\ldots,e_k \rbrace;
\]
\item[$(e)$] for any $k \in \N$ there is an eigenfunction $e_k \in E_{k-1}^\bot$ associated to the eigenvalue $\lambda_k$ such that the minimum in \eqref{eq23} is attained, i.e. $\Vert e_k \Vert_{L^2(\cM)}=1$ and
\begin{equation} \label{eq37}
\lambda_k = \Vert e_k \Vert^2;
\end{equation}
\item[$(f)$] the eigenfunctions $(e_k)_k$ are an orthonormal basis for $L^2(\cM)$ and an orthogonal bases for $\h$;
\item[$(g)$] each eigenvalue has finite multiplicity. Namely, if $\lambda_k$ is such that
\begin{equation} \label{eq34}
\lambda_{k-1}<\lambda_k=\ldots=\lambda_{k+h}<\lambda_{k+h+1}
\end{equation}
for some $h \in \N_0$, then $\spa \lbrace e_k,\ldots,e_{k+h} \rbrace$ is the eigenspace associated to $\lambda_k$.
\end{itemize}
\end{theorem}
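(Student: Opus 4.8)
The plan is to construct the eigenpairs inductively by repeatedly invoking Lemma~\ref{lemma1}, and then to extract each of the statements (a)--(g) from this construction together with Lemma~\ref{lemma2}, the compact embedding $H^1_V(\cM)\hookrightarrow L^2(\cM)$, and a strong maximum principle. For (a) and (b) I would apply Lemma~\ref{lemma1} with $X_\star=H^1_V(\cM)$, which is trivially weakly closed, obtaining $e_1$ with $\Vert e_1\Vert_{L^2(\cM)}=1$ that minimizes $J$; then $\lambda_1=\Vert e_1\Vert^2$ and $e_1$ solves the eigenvalue equation. Positivity of $\lambda_1$ comes from $(V_1)$, since $\Vert w\Vert^2\geq \upsilon_0\Vert w\Vert_{L^2(\cM)}^2$ gives $\lambda_1\geq\upsilon_0>0$, and the two characterizations agree by homogeneity. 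To get a \emph{nonnegative} eigenfunction I would replace $e_1$ by $|e_1|$: since $\Vert |e_1|\Vert=\Vert e_1\Vert$ and the $L^2$-norm is unchanged, $|e_1|$ is again a minimizer, hence a nonnegative eigenfunction for $\lambda_1$.

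The inductive step yields (d) and (e) simultaneously. Given $e_1,\dots,e_k$ and $E_k=\spa\{e_1,\dots,e_k\}$, I would apply Lemma~\ref{lemma1} to $X_\star=E_k^\bot$, weakly closed by Remark~\ref{rem1}, producing $e_{k+1}$ with $\lambda_{k+1}=\Vert e_{k+1}\Vert^2$ that minimizes the Rayleigh quotient over $E_k^\bot$. The only genuinely non-formal point is to upgrade $e_{k+1}$ from a constrained critical point---\eqref{eq4} a priori holds only for $\varphi\in E_k^\bot$---to a true eigenfunction. Testing the equation satisfied by each $e_i$ ($i\leq k$) against $e_{k+1}$ gives $\langle e_i,e_{k+1}\rangle=\lambda_i\int_\cM e_ie_{k+1}\,dv_g$; the left-hand side vanishes because $e_{k+1}\in E_k^\bot$ and $\lambda_i>0$, so $\int_\cM e_ie_{k+1}\,dv_g=0$, and therefore \eqref{eq4} with $\lambda_{k+1}$ holds against each $e_i$ as well (both sides vanish), hence on all of $H^1_V(\cM)$. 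Monotonicity $\lambda_{k+1}\geq\lambda_k$ is immediate from minimizing over the smaller set $E_k^\bot\subset E_{k-1}^\bot$, and $\lambda_k\to\infty$ follows from compactness: a bounded sequence in $H^1_V(\cM)$ that is $L^2$-orthonormal admits no $L^2$-convergent subsequence. That these exhaust the spectrum is a contradiction argument: an eigenfunction for $\mu\notin(\lambda_k)_k$ is orthogonal to every $e_k$ by Lemma~\ref{lemma2}, hence lies in every $E_k^\bot$, forcing $\mu\geq\lambda_{k+1}$ for all $k$, which is absurd.

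For (f), any $f\in H^1_V(\cM)$ orthogonal to all $e_k$ lies in every $E_k^\bot$, so $\Vert f\Vert^2/\Vert f\Vert_{L^2(\cM)}^2\geq\lambda_{k+1}\to\infty$ unless $f=0$; thus $(e_k)_k$ is complete in $H^1_V(\cM)$, and since $H^1_V(\cM)$ is dense in $L^2(\cM)$ with a dominating norm, its span is dense in $L^2(\cM)$ as well, giving the orthonormal $L^2$-basis and orthogonal $H^1_V$-basis. For (g), an infinite-dimensional eigenspace would again contradict the compact embedding, so each eigenvalue has finite multiplicity; moreover, expanding any eigenfunction of $\lambda_k=\dots=\lambda_{k+h}$ in the basis and using Lemma~\ref{lemma2} to kill every coefficient with $\lambda_j\neq\lambda_k$ shows it lies in $\spa\{e_k,\dots,e_{k+h}\}$.

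I expect the real obstacle to be the simplicity statement (c) and the strict positivity underlying (b), the only places where soft arguments do not suffice. The plan is to show that every eigenfunction of $\lambda_1$ has constant sign: indeed $|w|$ is also a minimizer, hence an eigenfunction, and a nonnegative eigenfunction must be \emph{strictly} positive by the strong maximum principle applied to $-\Delta_g+(V-\lambda_1)$, relying on the regularity and decay recorded in Remark~\ref{rem:1.3}. A constant-sign function that is $L^2$-orthogonal to the strictly positive $e_1$ must vanish, so if the $\lambda_1$-eigenspace had dimension at least two we could manufacture such a nonzero function, a contradiction; strictness $\lambda_1<\lambda_2$ follows at once. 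Establishing the maximum principle cleanly on a complete non-compact manifold is the delicate ingredient, while everything else reduces to careful bookkeeping with Lemmas~\ref{lemma1}--\ref{lemma2} and the compact embedding.
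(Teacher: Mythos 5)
Your proposal is correct, and its skeleton is the paper's: Lemma~\ref{lemma1} applied first to $X_\star=\h$ and then inductively to $E_k^\bot$; the upgrade of the constrained Euler equation to a genuine weak equation by testing the (inductively known) equations for $e_1,\dots,e_k$ against $e_{k+1}$ and using $\lambda_i>0$; divergence $\lambda_k\to\infty$ by playing the compact embedding against $L^2$-orthonormality; exhaustion of the spectrum via Lemma~\ref{lemma2}; and for (f) the ``trivial orthogonal complement of the span'' argument, where the paper instead verifies the basis property by an explicit partial-sum computation --- same content, yours softer. The genuine differences are in (c) and (g). For (g) you expand a $\lambda_k$-eigenfunction in the basis from (f) and kill every coefficient with $\lambda_j\neq\lambda_k$ by Lemma~\ref{lemma2}; the paper splits $\psi=\psi_1+\psi_2$ with $\psi_2\in E_{k+h}^\bot$ and derives a contradiction from the Rayleigh bound $\Vert\psi_2\Vert^2\geq\lambda_{k+h+1}\Vert\psi_2\Vert_{L^2(\cM)}^2$; both work, and yours is tidier. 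For (c) the paper stays elementary: with $e_1,f_1$ normalized eigenfunctions it assumes, via the sign discussion of (b), that $g=e_1-f_1\geq 0$, squares to get $e_1^2\geq f_1^2$, and integrates using $\int_\cM (e_1^2-f_1^2)\,dv_g=1-1=0$; no elliptic theory appears. Your route --- $\vert w\vert$ is again a minimizer, a nonnegative eigenfunction is strictly positive by the strong maximum principle for $-\Delta_g+(V-\lambda_1)$, and no nonzero constant-sign function can be $L^2$-orthogonal to the positive $e_1$ --- is the classical one. What each buys: the paper's trick avoids any maximum principle, but as written it leans on the unproved claim that \emph{every} $\lambda_1$-eigenfunction has constant sign (the ``not admissible'' step of (b)), and the implication $e_1\geq f_1\Rightarrow e_1^2\geq f_1^2$ itself tacitly uses sign information on $f_1$; your argument supplies exactly that missing positivity, at the price of the maximum principle. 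On that last point your caution is overstated: the strong maximum principle needed here is a purely local statement (Harnack/Hopf in coordinate charts, $V$ being continuous hence locally bounded) combined with connectedness of $\cM$, so completeness and non-compactness play no role; citing that local result, together with the regularity recorded in Remark~\ref{rem:1.3}, closes your proof.
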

\begin{proof}
 $(a)$ It suffices to apply Lemma~\ref{lemma1} with $X_\star=H^1_V(\cM)$ and to use the Lagrange multiplier rule.

$(b)$ The existence of an eigenfunction $e_1$ corresponding to the eigenvalues $\lambda_1$ is guaranteed  by part $(a)$ and as we have already seen during the proof of Lemma \ref{lemma2} assuming $\Vert e_1 \Vert_{L^2(\cM)}$ is not restrictive. In order to establish the nonnegativity, we recall (see \cite[Example 5.3]{MR2569498}) that for any $w \in H^1_V(\cM)$ we have
\begin{equation} \label{eq12}
\nabla_g w^+=
\begin{cases}
\nabla_g w & \hbox{if $w>0$} \\
0 &  \hbox{if $w \leq 0$}
\end{cases}
\quad \mbox{and} \quad
\nabla_g w^-=
\begin{cases}
0 & \hbox{if w$ \geq 0$} \\
\nabla_g w &  \hbox{if $w < 0$}.
\end{cases}
\end{equation}
In virtue of \eqref{eq 11} we get\
\[
\lambda_1 = \Vert e_1 \Vert^2= \Vert e_1^+ \Vert^2 + \Vert e_1^- \Vert^2 \geq \lambda_1 \Vert e_1^+\Vert_{L^2(\cM)}^2+\lambda_1 \Vert e_1^- \Vert_{L^2(\cM)}^2 = \lambda_1
\]
which implies
\[
\Vert e_1^+ \Vert=\lambda_1 \Vert e_1^+ \Vert_{L^2(\cM)}^2 \quad \mbox{and} \quad \Vert e_1^- \Vert^2=\lambda_1 \Vert e_1^- \Vert_{L^2(\cM)}^2
\]
that means the the infimum in \eqref{eq 11} is attained also by $e_1^+$ and $e_1^-$. Now, up to normalize the functions, we have $J(e_1)=J(e_1^+)=J(e_1^-)$ but this is not admissible unless $e_1^+=0$ or $e_1^-=0$. If we are in the second case the proof is completed, while in the first one is sufficient to take $-e_1$.

$(c)$  Suppose $e_1$, $f_1$ are two eigenfunctions associated to $\lambda_1$ with $e_1 \neq f_1$ and $\Vert e_1 \Vert_{L^2(\cM)}= \Vert f_1 \Vert_{L^2(\cM)} =1$ (we are going see that under these hypothesis the number $\xi$ stated in the theorem is $1$ while if $\Vert f_1 \Vert_{L^2(\cM)})\neq 1$ it is straightforward to verify that $\xi = 1/\Vert f_1 \Vert_{L^2(cM)}$).  Set $g:= e_1-f_1$ and observe that $g$ is an eigenfunction corresponding to $\lambda_1$. We have already proved in part $(b)$ that we are allowed to assume $g \geq 0$ which is equivalent to affirm $e_1 \geq f_1$. Taking the square on both sides we obtain
\begin{equation} \label{eq13}
e_1^2\geq f_1^2.
\end{equation}
However, we also have that
\begin{equation} \label{eq14}
\int_\cM e_1^2-f_1^2 \, dv_g = \Vert e_1 \Vert_{L^2(\cM}^2-\Vert f_1 \Vert_{L^2(\cM)}^2=1-1=0
\end{equation}
Putting together \eqref{eq13} and \eqref{eq14} we get the desired assertion.

$(d)$ We start defining
\begin{equation} \label{eq15}
\lambda_{k+1}:=\inf_{\substack{w \in E_k^\bot \\ \Vert w \Vert_{L^2(\cM)}=1}} \Vert w \Vert
\end{equation}
pointing out that
\[
E_k^\bot=\lbrace w \in \h : \langle w, e_i \rangle=0 \ \mbox{for} \ i=1,...,k \rbrace.
\]
Thanks to Lemma \ref{lemma1} the infimum in \eqref{eq15} is actually a minimum attained by a function $e_k \in E_k^\bot$ (notice that by virtue of remarf \ref{rem1} the set $E_k^\bot$ is weakly closed). Clearly $E_{k+1}^\bot \subset E_k^\bot$, and so $\lambda_{k+1}\geq \lambda_k$. To see that $\lambda_1 \neq \lambda_2$ we proceed by contradiction. Let $e_1, e_2 \in \h$ the corresponding eigenfunctions of $\lambda_1, \lambda_2$ respectively. If $\lambda_1=\lambda_2$, being $\lambda_1$ simple, there is $\xi \in \R$ such that $e_2=\xi e_1$. Now, on one hand we would have
\begin{equation} \label{eq16}
\langle e_1, e_2 \rangle=\xi \langle e_1, e_1 \rangle \neq 0.
\end{equation}
On the other hand $e_2 \in E_1^\bot$, therefore
\begin{equation} \label{eq17}
\langle e_1, e_2 \rangle=0.
\end{equation}
Noticing that \eqref{eq16} and \eqref{eq17} are not compatible proves the statement. Now, we want to show that for every $k \in \N$ the function $e_k$ where the minimum in \eqref{eq15} is attained is a weak solution of problem \eqref{eq:Plambda}. By the Lagrange multiplier's rule, it follows that
\begin{multline} \label{eq18}
\int_{\cM} \langle \nabla_g e_{k+1}(\sigma), \nabla_g \varphi(\sigma) \rangle_g \, dv_g +\int_{\cM} V(\sigma) e_{k+1}(\sigma) \varphi(\sigma) \, dv_g \\
= \lambda_{k+1} \int_{\cM} e_{k+1}(\sigma) \varphi(\sigma) \, dv_g
\end{multline}
for any $\varphi \in E_k^\bot$. Our aim is to prove the previous equality also for $ \varphi \in \h \setminus E_k^\bot$. Observing that the assertion is true for $k=1$, arguing by induction, we suppose it holds also for $2,\ldots,k$ and we show the validity for the case $k+1$. Remembering that
\[
\h=E_k \oplus E_k^\bot,
\]
every $\varphi \in \h$ can be written as $\varphi= \varphi_1+\varphi_2$ with $\varphi_1 \in E_k$ and $\varphi_2 \in E_k^2$. We can also say that there exists some constants $c_1,...,c_j \in \R$ such that
\[
\varphi_1=\sum_{j=1}^{k} c_j e_j.
\]
We test \eqref{eq18} with $\varphi_2=\varphi-\varphi_1$ and we obtain
\(
\langle e_{k+1}, \varphi_2 \rangle=\lambda_{k+1} \langle e_{k+1}, \varphi \rangle_{L^2(\cM)}.
\)
Rearranging the terms we get
\begin{equation} \label{eq19}
\langle e_{k+1}, \varphi \rangle - \lambda_{k+1} \langle e_{k+1}. \varphi \rangle_{L^2(\cM)}=\sum_{j=1}^k \left( \langle e_{k+1},e_j \rangle-\lambda_{k+1} \langle e_{k+1}, e_j \rangle_{L^2(\cM)} \right)
\end{equation}
At this point, from \eqref{eq1} with $e_i$ as weak solution and $e_{k+1}$ as test function it follows
\begin{equation} \label{eq20}
0=\langle e_i, e_{k+1} \rangle = \lambda_i \langle e_i, e_{k+1} \rangle_{L^2(\cM)}
\end{equation}
for $i=1,...,k$ where we used the inductive hypothesis and the fact that $e_{k+1} \in E_k^\bot$. Then, we substitute \eqref{eq20} into \eqref{eq19} we obtain
\[
\langle e_{k+1}, \varphi \rangle=\lambda_{k+1} \langle e_{k+1}, \varphi \rangle_{L^2(\cM)}
\]
for any $\varphi \in \h$.
Next, we prove that $\lambda_k \to \infty$ as $k \to \infty$. As an intermediate step we claim that
\begin{equation} \label{eq21}
\langle e_k, e_h \rangle=\langle e_k, e_h \rangle_{L^2(\cM)}=0
\end{equation}
for each $k \neq h$. Suppose without loss of generality that $k>h$, or $k-1\geq h$. From $e_k \in E_{k-1}^\bot \subset E_h^\bot$ follows immediately that
\[
\langle e_k, e_h \rangle=0.
\]
To conclude the proof of the claim, considering $e_k$ as a weak solution of \eqref{eq:Plambda}, we act on \eqref{eq1} with $e_h$ and we obtain
\(
\langle e_k, e_h \rangle=\lambda_k \langle e_k, e_h \rangle_{L^2(\cM)}
\)
which implies
\(
\langle e_k, e_h \rangle_{L^2(\cM)}=0
\)
since $\lambda_k \neq 0$. Now suppose by contradiction that $(\lambda_k)_k$ is bounded. From \eqref{eq1} and \eqref{eq15} follows easily that
\[
\lambda_k=\Vert e_k \Vert^2.
\]
Hence, there exists a subsequence such that $e_{k_j} \rightharpoonup e_\infty$ in $\h$ and $e_{k_j}\to e_\infty$ in $L^2(\cM)$
as $j \to \infty$. In particular, we have that $(e_{k_j})_j$ is a Cauchy sequence in $L^2(\cM)$ but the orthogonality showed in the claim asserts that
\[
\Vert e_{k_j}-e_{k_i} \Vert_{L^2(\cM)}^2= \Vert e_{k_j} \Vert_{L^2(\cM)}^2+\Vert e_{k_i} \Vert_{L^2(\cM)}^2=2.
\]
It remains to prove that the sequence $(\lambda_k)_k$ consists in all the set of possible eigenvalues for problem \eqref{eq:Plambda}. In order to do that, let us assume the existence of an eigenvalue $\lambda \notin\lbrace \lambda_k : k \in \N \rbrace$. Because of the divergence of $(\lambda_k)_k$ there is $\tilde(k) \in \N$ such that
\begin{equation} \label{eq22}
\lambda_{\tilde{k}}<\lambda<\lambda_{\tilde{k}+1}.
\end{equation}
Furthermore, in correspondence of $\lambda$ there exists an eigenfunction $e \in \h$ that up to normalize can be assumed such that $\Vert e \Vert_{L^2(\cM)}=1$. We also emphasize that exploiting Lemma \ref{lemma1} we can characterize the eigenvalue as $\lambda = 2 J(e)$. Now, we show $e \notin E_k^\bot$. Were the contrary true, we would have
\begin{equation*}
\lambda = 2 J(e) \geq 2J(e_{k+1})=\lambda_{k+1}
\end{equation*}
that is against \eqref{eq22}. If $e \notin E_k^\bot$ there exists an index $i \in \lbrace 1,...,k \rbrace$ such that $\langle e, e_i \rangle \neq 0$, but this is in contradiction with the thesis of Lemma \ref{lemma2}.

$(e)$ This is a direct consequence of part $(d)$ and Lemma \ref{lemma1}.

$(f)$ The orthogonality with respect the two scalar product was already proved in \eqref{eq21}. Now, we need to prove the following claim:
\begin{equation} \label{eq26}
\mbox{if} \ w \in \h \ \mbox{is such that} \ \langle w,e_k \rangle=0 \ \forall k \in \N \ \mbox{then} \ w=0.
\end{equation}
Were the claim false, we would be able to find $ \tilde{w} \in \h \setminus \lbrace 0 \rbrace$ such that
\begin{equation} \label{eq24}
\langle \tilde{w}, e_k \rangle=0 \quad \forall k \in \N.
\end{equation}
Without loss of generality we may assume that $\Vert \tilde{w} \Vert_{L^2(\cM)}=1$, and since $\lambda_k \to \infty$ we can find a $\tilde{k} \in \N$ such that
\[
2J(\tilde{w}) < \lambda_{\tilde{k}+1}=\min_{\substack{w \in E_{\tilde{k}}^\bot \\ \Vert w \Vert_{L^2(\cM)}=1}} \Vert w \Vert^2
\]
from which it follows that $\tilde{w} \notin E_{\tilde{k}}^\bot$. As a consequence of that, $\langle \tilde{w}, e_j \rangle \neq 0$ for some index $j \in \lbrace 1,...,\tilde{k}\rbrace$ contradicting \eqref{eq24} and showing the validity of the claim. Now, take $f \in \h$, set $\tilde{e}_i:= e_i / \Vert e_ \Vert$ and
\[
f_j:= \sum_{i=1}^j \langle f,\tilde{e}_i \rangle \tilde{e}_i
\]
Clearly $f_j \in E_j$. We denote with $v_j:=f-f_j$ and we observe that
\begin{align} \label{eq25}
0&\leq \Vert v_j \Vert= \Vert f \Vert^2 + \Vert f_j \Vert^2-2\langle f,f_j \rangle
 =\Vert f \Vert^2 + \Vert f_j \Vert^2-2 \sum_{i=1}^j \langle f,\tilde{e}_i \rangle^2 \\
&= \Vert f \Vert^2-\sum_{i=1}^j \langle f,\tilde{e}_i\rangle^2 \notag
\end{align}
where we used the orthogonality of $(e_k)_k$. From \eqref{eq25} it follows that
\[
\sum_{i=1}^j \langle f, \tilde{e}_i \rangle^2 \leq \Vert f \Vert^2
\]
from which we can deduce that $\sum_{i=1}^\infty \langle f, \tilde{e}_i \rangle^2$ is convergent. Hence, setting
\[
\tau_j=\sum_{i=1}^j \langle f, \tilde{e}_i \rangle
\]
we have that $(\tau_j)_j$ is a Cauchy sequence in $\R$. Furthermore, exploiting again the orthogonality of $(e_k)_k$ in $\h$, for any $h >j $ we have that
\[
\Vert v_h-v_j \Vert^2= \left\Vert \sum_{i=j}^h \langle f, \tilde{e}_i \rangle \tilde{e}_i  \right\Vert^2=\sum_{i=j}^h \langle f, \tilde{e}_i \rangle^2=\tau_h-\tau_j,
\]
which implies $(v_j)_j$ is a Cauchy sequence in $\h$. Recalling the completeness of $\h$ we obtain
\[
v_j \to v \quad \mbox{in} \quad \h \quad \mbox{as} \quad j \to \infty.
\]
At this point, choose $k \in \N$ and observe that of any $j \geq k$ we have
\[
\langle v_j, \tilde{e}_k \rangle= \langle f, \tilde{e}_k \rangle- \langle f_j, \tilde{e}_k \rangle= \langle f, \tilde{e}_k \rangle-\langle f, \tilde{e}_k \rangle =0.
\]
Passing to the limit as $j \to \infty$ in the above chain of equality we get
\[
\langle v, \tilde{e}_k \rangle=0 \quad \forall k \in \N.
\]
Thus, by claim  \eqref{eq26} it follows that $v=0$, and as a consequence of that
\[
f_j=f-v_j \to f \quad \mbox{in} \quad \h \quad \mbox{as} \quad j \to \infty.
\]
So, the completeness of the sequence $(e_k)_k$ in $\h$ is established. It remains to prove that $(e_k)_k$ is a basis also in the space $L^2(\cM)$. We recall that $L^2(\cM)$ is the completion of the set of smooth functions with respect to the $L^2$-norm, see \cite{LiuBo}. For any $w \in L^2(\cM)$ we select a sequence $(w_j)_j \in C_c^\infty(\cM)$ such that  \begin{equation} \label{eq27}
\Vert w_j - w \Vert_{L^2(\cM)} \leq \frac{1}{j}.
\end{equation}
Then, since  $w_j \in \h$ and $(e_k)_k$ is a basis for $\h $, we can find a $k_j \in \N$ and a $v_j \in E_{k_j}$ such that
 \begin{equation} \label{eq28}
 \Vert v_j-w_j \Vert \leq \frac{1}{j}.
 \end{equation}
  Moreover, recalling the embedding $\h \hookrightarrow L^2(\cM)$, there exists a constant $C >0$ such that
\begin{equation} \label{eq 29}
\Vert w_j-v_j\Vert_{L^2(\cM)}\leq C \Vert w_j-v_j \Vert \leq \frac{C}{j}
\end{equation}
Putting together \eqref{eq27}, \eqref{eq28} and \eqref{eq 29} we get
\begin{equation} \label{eq30}
\Vert w-v_j \Vert_{L^2(\cM)} \leq \Vert w-w_j\Vert_{L^2(\cM)}+\Vert w_j-v_j \Vert_{L^2(\cM)} \leq \frac{C+1}{j}.
\end{equation}
Letting $j \to \infty$ we see that $(e_k)_k$ is a basis also for $L^2(\cM)$.

$(g)$ The finiteness of the dimension of the eigenspaces is due to the characterization of the eigenvalues \eqref{eq23}. Now, let $h \in \N_0$ be the integer such that \eqref{eq30} holds. Set
\[
E_k^h:=E_{k+h} \setminus \operatorname{span} \lbrace e_{k-1} \rbrace
\]
and observe that trivially every function in $E_k^h$ is an eigenfunction of problem \eqref{eq:Plambda} corresponding to the eigenvalue $\lambda_k=...=\lambda_{k+h}$. On the other hand, let $\psi \neq 0$ be an eigenfunction for problem \eqref{eq:Plambda} associated to $\lambda_k$. We are going to prove that $\psi \in E_k^h$. First of all we recall that
\[
\h=E_k^h \oplus \left( E_k^h \right)^\bot,
\]
then we can write $\psi=\psi_1+\psi_2$ with $\psi_1 \in E_k^h$ and $\psi_2 \in \left( E_k^h \right)^\bot$. Of course we have that \(\langle \psi_1, \psi_2 \rangle=0.\)
From \eqref{eq1} with $\psi$ both as a solution and as a test function, using \eqref{eq15}, it follows
\begin{equation} \label{eq32}
\lambda_k \Vert \psi \Vert_{L^2(\cM)}^2=\Vert \psi \Vert^2=\Vert \psi_1 \Vert^2 + \Vert \psi_2 \Vert^2.
\end{equation}
We emphasize that, being $\psi_2 \in E_k^h$, it is an eigenfunction of problem \eqref{eq:Plambda} corresponding to the eigenvalue $\lambda_k$.
Thus, we can act on \eqref{eq1} with $\psi_1$ as a weak solution and with $\psi_2$ as a test function obtaining
\begin{equation} \label{eq33}
\lambda_k \int_\cM \psi_1(\sigma) \psi_2(\sigma)\, dv_g=\langle \psi_1, \psi_2\rangle=0,
\end{equation}
which implies
\begin{equation} \label{eq31}
\langle \psi_1,\psi_2 \rangle_{L^2(\cM)}=0,
\end{equation}
and so
\begin{equation*}
\Vert \psi \Vert_{L^2(\cM)}^2=\Vert \psi_1 \Vert_{L^2(\cM)}^2+\Vert \psi_2 \Vert_{L^2(\cM)}^2.
\end{equation*}
At this point, since $\psi_1 \in E_k^h$, we can write
\[
\psi_1 = \sum_{i=k}^{k+h} c_i e_i
\]
for some constants $c_k,...,c_{k+h} \in \R$. By parts $(e)$ and $(f)$, recalling \eqref{eq34}, we have
\begin{equation} \label{eq36}
\Vert \psi_1 \Vert^2=\sum_{i=k}^{k+h}c_i^2 \Vert e_i \Vert^2=\sum_{i=k}^{k+h} c_i^2 \lambda_i=\lambda_k \sum_{i=k}^{k+h} c_i^2=\lambda_k \Vert \psi_1 \Vert_{L^2(\cM)}^2.
\end{equation}
Now, we observe that since $\psi_2=\psi-\psi_1$ is also an eigenfunction for $\lambda_k$. Therefore, from Lemma \ref{lemma2} and \eqref{eq34} follows that
\(
0=\langle \psi_2, e_1 \rangle=...=\langle \psi_2, e_{k-1} \rangle.
\)
Thus $\psi_2 \in E_{k+h}^\bot$. In order to conclude the proof, it suffices to show $\psi_2=0$. If not, we would have that
\begin{equation} \label{eq35}
\lambda_k <\lambda_{k+h+1} = \min_{w \in E_{k+h+1}^\bot} \frac{\Vert w \Vert^2}{\Vert w \Vert_{L^2(\cM)}^2} \leq \frac{\Vert \psi_2 \Vert^2}{\Vert \psi_2 \Vert_{L^2(\cM)}^2}.
\end{equation}
Putting together \eqref{eq32}, \eqref{eq31}, \eqref{eq36} and \eqref{eq35} and  we get
\begin{align*}
\lambda_k \Vert \psi \Vert_{L^2(\cM)}^2 &=\Vert \psi_1 \Vert^2 + \Vert \psi_2 \Vert^2  \\
& > \lambda_k \Vert \psi_1 \Vert_{L^2(\cM)}^2 \lambda_k \Vert \psi_2 \Vert_{L^2(\cM)}^2 \\
&=\lambda_k \Vert \psi \Vert_{L^2(\cM)}^2,
\end{align*}
a contradiction. Hence $\psi_2=0$ and from this we deduce
\(
\psi=\psi_1 \in E_k^h
\)
completing the proof of the theorem.
\end{proof}
Next Lemma generalize the Poincaré inequality to the case in which the functions belong to eigenspaces or its orthogonal.
\begin{lemma} \label{lemma3}
Let $k \in \N$. The following inequalities hold:
\begin{itemize}
\item[$(a)$] if $w \in E_k^{\perp}$ then
\begin{equation} \label{eq38}
\Vert w \Vert^2  \geq \lambda_{k+1} \Vert w \Vert_{L^2(\cM)}^2;
\end{equation}
\item[$(b)$] if $w \in E_k$ then
\begin{equation} \label{eq39}
\Vert w \Vert^2 \leq \lambda_{k} \Vert w \Vert_{L^2(\cM)}^2.
\end{equation}
\end{itemize}
\end{lemma}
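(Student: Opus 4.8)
The plan is to read both inequalities directly off the spectral description established in Theorem~\ref{th1}; no new analysis is needed, only the variational characterization of the eigenvalues together with the double orthogonality of the eigenfunctions.

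For part~$(a)$ I would argue from the minimax characterization~\eqref{eq23}. If $w=0$ the inequality is trivial, so assume $w \in E_k^\perp \setminus \lbrace 0 \rbrace$ and set $\hat{w} := w / \Vert w \Vert_{L^2(\cM)}$, which is admissible in~\eqref{eq23} because $E_k^\perp$ is a subspace and $\Vert \hat{w} \Vert_{L^2(\cM)} = 1$. The characterization $\lambda_{k+1} = \min \lbrace \Vert v \Vert^2 : v \in E_k^\perp, \ \Vert v \Vert_{L^2(\cM)} = 1 \rbrace$ then yields $\lambda_{k+1} \le \Vert \hat{w} \Vert^2 = \Vert w \Vert^2 / \Vert w \Vert_{L^2(\cM)}^2$, which is exactly~\eqref{eq38} after clearing the denominator. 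Equivalently, one reads off~\eqref{eq38} from the Rayleigh-quotient form of~\eqref{eq23}.

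For part~$(b)$ I would expand $w$ in the eigenbasis. Since $w \in E_k = \spa \lbrace e_1, \dots, e_k \rbrace$, write $w = \sum_{i=1}^k c_i e_i$ with $c_i \in \R$. By Theorem~\ref{th1}$(f)$ the family $(e_i)_i$ is orthogonal both for $\langle \cdot , \cdot \rangle$ and for the $L^2(\cM)$ inner product, as recorded in~\eqref{eq21}, while $\Vert e_i \Vert_{L^2(\cM)} = 1$ and $\Vert e_i \Vert^2 = \lambda_i$ by~\eqref{eq37}. Hence $\Vert w \Vert^2 = \sum_{i=1}^k c_i^2 \lambda_i$ and $\Vert w \Vert_{L^2(\cM)}^2 = \sum_{i=1}^k c_i^2$. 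Using the monotonicity $\lambda_i \le \lambda_k$ valid for every $i \le k$ from Theorem~\ref{th1}$(d)$, I bound $\sum_{i=1}^k c_i^2 \lambda_i \le \lambda_k \sum_{i=1}^k c_i^2$, which is precisely~\eqref{eq39}.

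Since every ingredient is already available, there is no genuine obstacle; the only point that demands care is the simultaneous use of the two orthogonality relations in part~$(b)$. One must invoke both that the $e_i$ are pairwise orthogonal in $\h$, so that the cross terms in $\Vert w \Vert^2$ drop out, and that they are pairwise orthogonal in $L^2(\cM)$, so that the cross terms in $\Vert w \Vert_{L^2(\cM)}^2$ drop out; this is exactly the content of~\eqref{eq21}.
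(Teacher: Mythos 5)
Your proof is correct, but part $(a)$ takes a genuinely different route from the paper. The paper proves both parts uniformly by expansion in the eigenbasis: for $(a)$ it writes $w=\sum_{j=k+1}^{\infty}\alpha_j e_j$, computes $\Vert w\Vert^2=\sum_{j\geq k+1}\alpha_j^2\lambda_j$, and bounds below by $\lambda_{k+1}\sum_{j\geq k+1}\alpha_j^2=\lambda_{k+1}\Vert w\Vert_{L^2(\cM)}^2$, invoking Theorem~\ref{th1}$(f)$, \eqref{eq37} and the Bessel--Parseval identity; for $(b)$ it uses the same computation with the finite sum. Your part $(b)$ coincides with this. Your part $(a)$, by contrast, reads the inequality directly off the variational characterization~\eqref{eq23}: normalize $w$ in $L^2(\cM)$ and use that $\lambda_{k+1}$ is the minimum of the Rayleigh quotient over $E_k^{\perp}$. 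This is arguably more economical, since it only requires Theorem~\ref{th1}$(d)$--$(e)$ (that the minimum in~\eqref{eq23} exists and equals $\lambda_{k+1}$) and completely bypasses the completeness of the eigenbasis in $\h$ and the transfer of the expansion to the $L^2$ norm, which is exactly the step where the paper must cite Parseval. What the paper's expansion argument buys in exchange is uniformity (one computation serves both parts) and an explicit display of where each spectral ingredient enters; note also that for $w \in E_k^{\perp}$ the infinite expansion itself needs a justification (orthogonality in $\h$ of $w$ against $e_1,\dots,e_k$ kills the first $k$ coefficients), which your route never has to address. Both arguments are complete given Theorem~\ref{th1}, so this is a matter of economy rather than correctness.
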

\begin{proof}
We start with the case $(a)$. Since $w \in E_k^{\perp}$ we can write
\[
w=\sum_{j=k+1}^{\infty} \alpha_j e_j
\]
for some coefficients $\alpha_j \in \R$. Thus, we compute
\[
\Vert w \Vert^2 = \langle w,w \rangle =\sum_{j=k+1}^{\infty} \alpha_j^2 \lambda_j \geq \lambda_{k+1} \Vert w \Vert^2_{L^2(\cM)}
\]
where we used Theorem \ref{th1} $(f)$, \eqref{eq37} and the Bessel-Parseval's identity (see for instance \cite[Theorem 5.9]{MR2759829}). On the other hand, when $w \in E_k$ we have
\[
w=\sum_{j=1}^k \alpha_j e_j.
\]
As a consequence, similarly as we did above we get
\[
\Vert w \Vert^2 = \sum_{j=1}^k \alpha_j^2 \lambda_j \leq \lambda_k \Vert w \Vert_{L^2(\cM)}^2.
\]
\end{proof}

\section{A setting for ($P_\lambda$)}

Let us consider
\begin{equation*}
\begin{cases}  - \Delta_g w +V(\sigma)w=\alpha(\sigma)f(w) + \lambda w & \mbox{in} \ \cM \\
  w\geq 0 & \mbox{in} \ \cM\\
  w(\sigma) \rightarrow 0 & \mbox{as} \ d_g(\sigma_0,\sigma)\rightarrow \infty,
\end{cases}
\end{equation*}
where $\alpha \in L^1(\cM) \cap L^{\infty}(\cM) \setminus \lbrace 0 \rbrace$ is a nonnegative function. $f\colon  \R \to \R$ is a continuous  function such that $f(0)=0$ that satisfies
\begin{itemize}
\item[$(f_1)$]
\[
\lim_{t \to 0} \frac{f(t)}{|t|}=0;
\]
\item[$(f_2)$]
there results
\[
\lim_{t \to + \infty} \frac{f(t)}{|t|^{r-1}} < \infty
\]
where $r \in (2,2d/(d-2))$;
\item[$(f_3)$] set $F(t):= \displaystyle\int_0^t f(\tau)\, d \tau$. The map
\[
t \to f(t)t-2F(t)
\]
is non decreasing if $t >0$;
\item[$(f_4)$] $0<rF(t)<f(t)t$ for all $t >0$.
\end{itemize}
\begin{remark}
The behavior of $f$ on the half-line $(-\infty,0)$ is irrelevant here. Indeed, since we are interested only in looking for positive solutions, we can set
\[
f^+(t):=
\begin{cases}
f(t) & t>0 \\
0 & t \leq 0.
\end{cases}
\]
For the sake simplicity, since now to the end of the paper, we will write $f$ instead of $f^+$.
\end{remark}
\begin{remark}
If $f \in C^1(\mathbb{R})$, assumption $(f_3)$ can be formulated as
\begin{itemize}
\item[$(f_3')$] The map $t \mapsto F(t)/t$ is non-decreasing for $t>0$.
\end{itemize}
This shows that $(f_3)$ is a reasonable assumption that extends $(f_3')$ in the less regular case we are considering.
\end{remark}
In order to find solutions for problem \eqref{eq:Plambda} we introduce the energy functional associated to the problem. Namely, let $J_{\lambda}(w):\h \to \R$ where
\[
J_{\lambda}(w) = \frac{1}{2} \Vert w \Vert - \frac{\lambda}{2} \Vert w \Vert^2_{L^2(\cM)}-\int_{\cM} \alpha(\sigma) F(w(\sigma))\, dv_g.
\]
In virtue of the embedding results presented in the previous sections this functional is well defined and it is standard to prove that is $C^1$. Moreover, as it is well known, critical points of $J_\lambda$ correspond to weak solutions of problem \eqref{eq:Plambda}, i.e.
\[
\langle w, \varphi \rangle = \lambda \langle w,\varphi\rangle_{L^2(\cM)} +\int_\cM \alpha(\sigma) f(w(\sigma)) \varphi(\sigma)\, dv_g
\]
for any $\varphi \in \h$. More in general, one can shows that the derivative of the functional $J_\lambda$ along a function $v \in \h$ is
\begin{equation} \label{eq49}
J'_\lambda(w) \left[ w \right]= \langle w,v \rangle - \lambda \langle w,v \rangle_{L^2(\cM)}-\int_{\cM} \alpha(\sigma) f(w(\sigma)) v(\sigma)\, dv_g.
\end{equation}
Now, take $s \in \left[2,2^*\right)$ and consider its conjugate exponent $s'$ such that $1/s+1/s'=1$. We select a function $ h \in L^{s'}(\cM)$ and we focus on the equation
\begin{equation} \label{eq48}
-\Delta_g w = h(\sigma) \quad \in \cM.
\end{equation}
By applying the classical Riesz or Lax-Milgram Theorem, one can easily show that the problem above has a unique weak solution. In virtue of that, we are able to define
\begin{eqnarray*}
\Delta_g^{-1}: L^{s'}(\cM) &\rightarrow& \h \\
h & \mapsto & w=\Delta_g^{-1} h
\end{eqnarray*}
where $\Delta_g^{-1} h$ is the only weak solution of \eqref{eq48}, that means
\begin{equation}\label{eq50}
\langle \Delta_g^{-1} h, \varphi \rangle=\langle h, \varphi \rangle_{L^2(\cM)}.
\end{equation}
We emphasize that the operator $\Delta_g^{-1}$ is compact. Indeed, it is possible to write it by the composition of two maps
\[
\begin{tikzcd}
L^{s'}(\cM)  \arrow[r, "j"] & \left(\h\right)^* \arrow[r, "\Delta_g^{-1}"] & \h
\end{tikzcd}
\]
where the first is compact recalling that $\h \hookrightarrow L^s(\cM)$ is compact and applying \cite[Theorem 6.4]{MR2759829}. Since $\h$ is a Hilbert space, there is a unique element called the gradient of $J_\lambda$ and denoted $\nabla J_\lambda$ such that
\begin{equation} \label{eq51}
\langle \nabla J_\lambda (w),v \rangle=J_\lambda'(u) \left[ v \right].
\end{equation}
It is also possible to verify that the gradient of $J_\lambda$ can be written as
\begin{equation} \label{eq52}
\nabla J_\lambda (w)=w-\Delta_g^{-1}\left( \lambda w +\alpha f(w) \right).
\end{equation}

We begin our analysis proving a technical lemma that will provide some useful estimates we will use throughout the paper.
\begin{lemma} \label{lemma4}
If $f \colon \left[0,\infty \right) \to \R$ is a function that satisfies $(f_1)$ -- $(f_4)$, then we have the following estimates:
\begin{itemize}
\item[$(i)$] for any $\varepsilon >0$ there exists a constant $A_1^\varepsilon >0$ such that
\begin{equation} \label{eq40}
f(t) \leq 2\varepsilon t + rA_1^\varepsilon t^{r-1}
\end{equation}
and
\begin{equation} \label{eq41}
F(t) \leq \varepsilon t^2 + A_1^\varepsilon t^r
\end{equation}
for any $ t \geq 0$;
\item[$(ii)$] for any $\varepsilon >0$ there exist $A_2, A_2^{\tilde{\varepsilon}} >0$ such that
\begin{equation} \label{eq42}
f(t) \leq A_2+A_2^{\varepsilon} t^{r-1}
\end{equation}
for any $t \geq 0$;
\item[$(iii)$] there exists $A_3, A_4 >0$ such that
\begin{equation} \label{eq43}
F(t) \geq A_3 t^r-A_4
\end{equation}
for any $t \geq 0$
\end{itemize}
\end{lemma}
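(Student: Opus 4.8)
The plan is to derive each of the three estimates by splitting the half-line $[0,\infty)$ into a neighborhood of the origin, a neighborhood of infinity, and the compact interval in between, using $(f_1)$ near $0$, $(f_2)$ near $+\infty$, and continuity on the middle piece; the lower bound $(iii)$ will instead be extracted from the Ambrosetti--Rabinowitz-type condition $(f_4)$.

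\textbf{Estimate $(i)$.} First I would invoke $(f_1)$: given $\varepsilon>0$ there is $\delta>0$ with $f(t)\le 2\varepsilon t$ for all $t\in[0,\delta]$. Next, by $(f_2)$ the ratio $f(t)/t^{r-1}$ stays bounded as $t\to+\infty$, so there exist $M>\delta$ and $L>0$ such that $f(t)\le L\,t^{r-1}$ for $t\ge M$. On the remaining compact interval $[\delta,M]$ the continuous function $f$ is bounded while $t^{r-1}\ge\delta^{r-1}>0$, hence $f(t)\le C\,t^{r-1}$ there for a suitable $C$. Choosing $A_1^\varepsilon:=\max\{L,C\}/r$ and using that $r A_1^\varepsilon t^{r-1}\ge 0$, the three regimes glue into \eqref{eq40} for every $t\ge0$. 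Inequality \eqref{eq41} then follows at once by integrating \eqref{eq40} from $0$ to $t$, since $\int_0^t 2\varepsilon\tau\,d\tau=\varepsilon t^2$ and $\int_0^t rA_1^\varepsilon\tau^{r-1}\,d\tau=A_1^\varepsilon t^r$.

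\textbf{Estimate $(ii)$.} This I would read off directly from $(i)$. Because $r-1>1$, Young's inequality applied to the linear term gives $2\varepsilon t\le A_2+A_2^\varepsilon{}'\,t^{r-1}$ for suitable constants depending only on $\varepsilon$ and $r$; substituting into \eqref{eq40} absorbs the linear term into the polynomial one and produces \eqref{eq42}. Alternatively one argues exactly as in $(i)$, bounding $f$ by its maximum over $[0,M]$ plus the growth $L\,t^{r-1}$ coming from $(f_2)$.

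\textbf{Estimate $(iii)$.} Here the key is $(f_4)$. I would note that, for $t>0$,
\[
\frac{d}{dt}\frac{F(t)}{t^r}=\frac{f(t)t-rF(t)}{t^{r+1}}>0,
\]
the positivity being precisely the inequality $rF(t)<f(t)t$ of $(f_4)$; thus $t\mapsto F(t)/t^r$ is increasing on $(0,\infty)$. Fixing any $t_0>0$ and setting $A_3:=F(t_0)/t_0^r$ (which is positive, since $F(t_0)>0$ again by $(f_4)$), monotonicity yields $F(t)\ge A_3 t^r$ for all $t\ge t_0$. For $t\in[0,t_0]$ one has $F(t)\ge0$ while $A_3 t^r\le A_3 t_0^r=F(t_0)$, so taking $A_4:=F(t_0)$ forces $A_3 t^r-A_4\le 0\le F(t)$ there as well, and \eqref{eq43} holds on all of $[0,\infty)$. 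The computations are elementary throughout; the only point demanding a little care is the \emph{uniformity} in $t$ in parts $(i)$ and $(iii)$, i.e.\ merging the near-origin, intermediate, and near-infinity bounds into a single inequality, which is resolved by the compactness of the middle interval together with the sign of the added terms.
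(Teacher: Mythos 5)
Your proof is correct and follows essentially the same strategy as the paper: for $(i)$ and $(ii)$ you split $[0,\infty)$ into a neighborhood of $0$ governed by $(f_1)$, a neighborhood of infinity governed by $(f_2)$, and a compact middle interval handled by continuity, then integrate to pass from $f$ to $F$. Your argument for $(iii)$ — differentiating $F(t)/t^{r}$ and using $(f_4)$ to get monotonicity — is just the differential form of the paper's computation, which integrates the rearranged inequality $r/t \leq f(t)/F(t)$ to reach the same conclusion $F(t) \geq A_3 t^{r}$ for large $t$.
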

\begin{proof}
By  $(f_1) $ and $(f_2)$, for each $\varepsilon >0$ we can find $0 <\gamma_\varepsilon< \theta_\varepsilon$ such that
\begin{equation} \label{eq44}
f(t) \leq 2\varepsilon t
\end{equation}
for any $ t \in \left[0, \gamma_{\varepsilon} \right)$ and
\begin{equation} \label{eq45}
f(t) \leq r A_1 t^{r-1}
\end{equation}
for any $t \geq \theta_\varepsilon$ and for some $A_1>0$. Furthermore, exploiting the continuity of $f$, we can find a constant $K_\varepsilon>0$ such that
\begin{equation} \label{eq46}
\frac{f(t)}{t^{r-1}} \leq r K_\varepsilon
\end{equation}
for any $t \in \left[\gamma_\varepsilon, \theta_\varepsilon \right)$. Putting together \eqref{eq44}, \eqref{eq45}, \eqref{eq46} and relabelling the constants we obtain \eqref{eq40}. Integrating \eqref{eq40} we get \eqref{eq41}. It is also possible to find $A_2 >0$ such that
\begin{equation} \label{eq47}
f(t) \leq A_2
\end{equation}
for any $t \in \left[0, \theta_\varepsilon \right)$. Coupling \eqref{eq45} and \eqref{eq47} we have \eqref{eq42}. In order to prove the last part of the lemma we fix $R >0$. We reorder the terms in $(f_4)$ to have
\[
\frac{r}{t} \leq \frac{f(t)}{F(t)}.
\]
Integrating between $R$ and $t$ and using the properties of the logarithm we obtain
\[
F(t) \geq A_3 t^r
\]
for some $A_3>0$ and any $t \geq R$. Since there is $A_4>0$ such that
\[
F(t) \geq -A_4
\]
for any $0 \leq t < R$ the proof of the lemma ends gathering the last two equations.
\end{proof}
We end this section by proving that the functional $J_\lambda$ satisfies a good compactness condition in Critical Point Theory.

\begin{definition}
We say that a sequence $(w_j)_j \subset \h$ is a Palais-Smale sequence at level $c \in \R$, $(PS)_c$ sequence for short, if $J_\lambda (w_j) \to c$ in $\R$ and $J_\lambda'(w_j) \to 0$ in $\left( \h \right)^*$ as $j \to \infty$. Furthermore, the functional $J_\lambda$ is said to satisfy the $(PS)_c$ condition if every $(PS)_c$ sequence for $J_\lambda$ admits a strongly convergent subsequence in $\h$.
\end{definition}
\begin{proposition} \label{prop1}
Let $f$ a map satisfying $(f_1)$--$(f_4)$ and $\lambda >0$ a real parameter. Then, $(PS)_c$ condition holds for every $c \in \R$ for functional  $J_\lambda$.
\end{proposition}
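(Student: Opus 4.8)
The plan is to follow the standard two-step scheme for functionals of mountain-pass type: first show that every $(PS)_c$ sequence is bounded in $\h$, then upgrade boundedness to strong convergence using the compactness built into the gradient formula \eqref{eq52}. Let $(w_j)_j$ be a $(PS)_c$ sequence, so that $J_\lambda(w_j)\to c$ and $J_\lambda'(w_j)[w_j]=o(1)\|w_j\|$. I would first form the combination
\[
2J_\lambda(w_j)-J_\lambda'(w_j)[w_j]=\int_\cM \alpha\bigl(f(w_j)w_j-2F(w_j)\bigr)\, dv_g,
\]
whose left-hand side is bounded by $2c+o(1)+o(1)\|w_j\|$. Since $(f_4)$ gives $f(t)t-2F(t)>(r-2)F(t)\ge 0$, this yields $\int_\cM \alpha F(w_j)\, dv_g\le C+o(1)\|w_j\|$, and combining with the lower bound $F(t)\ge A_3t^r-A_4$ of Lemma~\ref{lemma4}$(iii)$ produces the crucial estimate $\int_\cM \alpha |w_j|^r\, dv_g\le C+o(1)\|w_j\|$.

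The delicate point, which I expect to be the main obstacle, is that the quadratic part $\|w\|^2-\lambda\|w\|_{L^2(\cM)}^2$ is indefinite once $\lambda\ge\lambda_1$, so the energy bound alone does not control $\|w_j\|_{L^2(\cM)}$ and coercivity is lost. To circumvent this I would argue by contradiction through a blow-up/normalization. Suppose $t_j:=\|w_j\|\to\infty$ and set $v_j:=w_j/t_j$, so $\|v_j\|=1$ and, up to a subsequence, $v_j\weakto v$ in $\h$ with $v_j\to v$ in $L^q(\cM)$ for all $q\in[2,2^*)$ by the compact embedding. Dividing the estimate $\int_\cM\alpha|w_j|^r\le C+o(1)t_j$ by $t_j^r$ and using $r>2$ forces $\int_\cM\alpha|v|^r\, dv_g=0$, hence $v$ vanishes a.e. on $\{\alpha>0\}$. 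Next, dividing $J_\lambda'(w_j)[\varphi]=o(1)\|w_j\|$ by $t_j$ and letting $j\to\infty$ (the nonlinear term vanishes via the growth bound of Lemma~\ref{lemma4}$(ii)$ together with the same estimate on $\int\alpha|w_j|^r$) shows that $v$ solves the linear eigenvalue equation $\langle v,\varphi\rangle=\lambda\langle v,\varphi\rangle_{L^2(\cM)}$ for every $\varphi\in\h$, while dividing $J_\lambda'(w_j)[w_j]=o(1)\|w_j\|$ by $t_j^2$ gives $\lambda\|v\|_{L^2(\cM)}^2=1$, so $v\neq 0$. Thus $v$ would be a nontrivial eigenfunction of $-\Delta_g+V$ vanishing on the positive-measure set $\{\alpha>0\}$, which is excluded by unique continuation. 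This contradiction proves that $(w_j)_j$ is bounded.

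Once boundedness is secured the rest is soft. Up to a subsequence $w_j\weakto w$ in $\h$ and $w_j\to w$ in $L^q(\cM)$ for every $q\in[2,2^*)$. Using the linearity of $\Delta_g^{-1}$ I would rewrite \eqref{eq52} as
\[
w_j=\nabla J_\lambda(w_j)+\Delta_g^{-1}(\lambda w_j)+\Delta_g^{-1}\bigl(\alpha f(w_j)\bigr).
\]
The first term tends to $0$ in $\h$ because $(w_j)_j$ is a $(PS)$ sequence. For the second, $w_j\to w$ in $L^2(\cM)$ and $\Delta_g^{-1}$ is continuous on $L^2(\cM)$, so $\Delta_g^{-1}(\lambda w_j)\to\Delta_g^{-1}(\lambda w)$ in $\h$. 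For the third, the growth bound of Lemma~\ref{lemma4}$(ii)$ and the continuity of the Nemytskii operator give $\alpha f(w_j)\to\alpha f(w)$ in $L^{r'}(\cM)$, with $1/r+1/r'=1$, whence $\Delta_g^{-1}(\alpha f(w_j))\to\Delta_g^{-1}(\alpha f(w))$ in $\h$. Therefore $w_j$ converges strongly in $\h$, and by uniqueness of the weak limit the limit is $w$, which establishes the $(PS)_c$ condition for every $c\in\R$.
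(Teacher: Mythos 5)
Your reduction to boundedness (via $2J_\lambda(w_j)-J_\lambda'(w_j)[w_j]$ and $(f_4)$) and your final compactness step are sound and agree with the paper; the gap lies in the blow-up argument. Recall that $f$ is prescribed only on $[0,\infty)$ and is extended by $f^+\equiv 0$ on $(-\infty,0]$ (see the Remark in Section 3), so $F(t)=0$ for $t\le 0$ and Lemma~\ref{lemma4}$(iii)$ holds only for $t\ge 0$. Consequently your key estimate controls only the positive part of $w_j$: what actually follows is
\begin{equation*}
\int_\cM \alpha\,(w_j^+)^r\,dv_g \le C + o(1)\Vert w_j \Vert ,
\end{equation*}
not the bound on $\int_\cM\alpha\vert w_j\vert^r\,dv_g$ that you state. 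After normalization this gives only $v^+=0$, i.e.\ $v\le 0$, a.e.\ on $\lbrace \alpha>0\rbrace$. A sign-changing eigenfunction that is nonpositive on $\lbrace\alpha>0\rbrace$ is in no conflict with unique continuation, so the contradiction you aim for evaporates. Worse, the configuration you arrive at cannot be excluded by any argument: if $\lambda$ happens to be an eigenvalue admitting an eigenfunction $e$ with $e\le 0$ a.e.\ on $\lbrace\alpha>0\rbrace$, then $\alpha F(je)=0$ and $\alpha f(je)=0$ a.e., so $w_j:=je$ satisfies $J_\lambda(w_j)=0$ and $\nabla J_\lambda(w_j)=0$ for every $j$, and is an unbounded $(PS)_0$ sequence whose normalization converges exactly to such a $v$. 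So the case ``$v\neq 0$ but $v\le 0$ on $\lbrace\alpha>0\rbrace$'' is a genuine obstruction, not a technicality.

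Independently of the sign issue, the appeal to ``unique continuation'' is itself unjustified in this setting. Since $\alpha$ is only assumed to lie in $L^1(\cM)\cap L^\infty(\cM)$, the set $\lbrace\alpha>0\rbrace$ is merely measurable, not open; you would need the strong, measure-theoretic unique continuation property (a solution of $-\Delta_g v+Vv=\lambda v$ vanishing on a set of positive measure vanishes identically) for a Schr\"odinger operator on a complete non-compact Riemannian manifold. That is a deep external theorem, nowhere cited or available in the paper, and the paper is structured precisely so as to avoid it: its boundedness proof splits into the case $u\neq 0$, handled by Fatou's lemma and the superquadratic growth of $F$, and the case $u=0$, handled by the maximization trick $J_\lambda(\xi_j w_j)=\max_{0\le\xi\le 1}J_\lambda(\xi w_j)$, which is exactly where hypothesis $(f_3)$ enters. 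It is telling that your proof never uses $(f_3)$. To your credit, your disposal of the case $v=0$ --- dividing $J_\lambda'(w_j)[w_j]$ by $\Vert w_j\Vert^2$ to conclude $\lambda\Vert v\Vert_{L^2(\cM)}^2=1$ --- is correct and is genuinely simpler than the paper's $(f_3)$-based argument for that case; but the remaining case identified above is where your proof breaks down and where the paper's machinery (or some substitute for it) is indispensable.
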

\begin{proof}
Let $(w_j)_j \subset \h$ a $(PS)_c$ sequence for functional $J_\lambda$, i.e.
\begin{equation} \label{eq53}
J_\lambda(w_j) \to c \quad \mbox{in} \ \R
\end{equation}
and
\begin{equation} \label{eq54}
J_\lambda'(w_j) \to 0 \quad \mbox{in} \ \h
\end{equation}
as $ j \to \infty$. We observe that
\begin{equation} \label{eq55}
2J_\lambda (w_j)-J_\lambda'(w_j) \left[ w_j \right] = \int_\cM \alpha(\sigma) \left[ f(w_j(\sigma) w_j (\sigma)-2F(w_j(\sigma)) \right] \, dv_g.
\end{equation}
Hence, collecting \eqref{eq53}, \eqref{eq54}, \eqref{eq55}, we can find a $C_1>0$ such that
\begin{equation} \label{eq56}
\int_\cM \alpha(\sigma) \left[ f(w_j(\sigma) w_j (\sigma)-2F(w_j(\sigma)) \right]\, dv_g. \leq C_1
\end{equation}
for all $j \in \N$.

\textit{Claim}: the sequence $(w_j)_j$ is bounded in $\h$.

If not, we might assume that $\Vert w_j \Vert \to \infty$ as $j \to \infty$. Let us set
\(
u_j:=\frac{w_j}{\Vert w_j \Vert}.
\)
Since $(u_j)_j$ is bounded in $\h$ we can assume
\[
u_j \rightharpoonup u \quad \mbox{in} \ \h,
\]
and from \cite[Lemma 2.1]{MR4020749}
we get
\[
u_j \to u \quad \mbox{in $L^s(\cM)$}
\]
for all $s \in \left[2, 2^*\right)$. We distinguish the cases $u \neq 0$ and $u =0$. In the first case we define
\[
\cM_0:=\lbrace \sigma \in \cM \mid u(\sigma) =0 \rbrace
\]
and clearly we have
\[
\lim_{j \to \infty} w_j(\sigma)= \infty \quad \mbox{for a.e. $\sigma \in \cM_0^{c}$}.
\]
From \ref{lemma4} $(iii)$ it is straightforward to verify
\[
\lim_{t \to \infty} \frac{F(t)}{t^2}=\infty
\]
thus, applying the Fatou's Lemma, we get
\[
\lim_{j \to \infty} \int_{\cM_0^c} \alpha(\sigma) \frac{F(w_j(\sigma))}{\Vert w_j \Vert^2}\, dv_g=\infty.
\]
But
\[
\int_{\cM} \alpha(\sigma) \frac{F(w_j(\sigma))}{\Vert w_j \Vert^2}\, dv_g=\int_{\cM_0} \alpha(\sigma) \frac{F(w_j(\sigma))}{\Vert w_j \Vert^2}\, dv_g+\int_{\cM_0^c} \alpha(\sigma) \frac{F(w_j(\sigma))}{\Vert w_j \Vert^2}\, dv_g
\]
and so
\begin{equation} \label{eq57}
\lim_{j \to \infty}\int_{\cM} \alpha(\sigma) \frac{F(w_j(\sigma))}{\Vert w_j \Vert^2}\, dv_g=\infty.
\end{equation}
Conversely, we note that
\[
\frac{J_\lambda (w_j)}{\Vert w_j \Vert^2}= \frac{1}{2}-\frac{\lambda \Vert w_j \Vert_{L^2(\cM)}^2}{2\Vert w_j \Vert^2}-\int_\cM \alpha(\sigma)\frac{F(w_j(\sigma))}{\Vert w_j \Vert^2} \, dv_g
\]
and recalling $ \h \hookrightarrow L^2(\cM)$ we obtain that $ \Vert w_j \Vert_{L^2(\cM)}/\Vert w_j \Vert$ is bounded. These two facts and \eqref{eq53} imply the existence of $C_2>0$ such that
\begin{equation} \label{eq58}
\int_\cM \alpha(\sigma)\frac{F(w_j(\sigma))}{\Vert w_j \Vert^2} \, dv_g \leq C_2
\end{equation}
Comparing \eqref{eq57} and \eqref{eq58} we get a contradiction. If $u=0$, we fix $\eta>0$ and we define
\(
v_j:=\sqrt{2\eta} \, u_j.
\)
Clearly
\begin{equation} \label{eq59}
v_j \to 0  \quad \mbox{in} \ L^s(\cM)
 \end{equation}
as $j \to \infty$,  and from Lemma~\ref{lemma4} $(i)$ and the general Lebesgue dominated convergence Theorem  \cite[Section 4.4, Theorem 19]{MR1013117} it follows that
\begin{equation} \label{eq60}
\lim_{j \to \infty} \int_\cM \alpha(\sigma) F(v_j) \, dv_g=0.
\end{equation}
Since $\Vert w_j \Vert \to \infty$ as $j \to \infty$ it is possible to find $j_1 \in \N$ such that
\[
0<\sqrt{2\eta} \frac{1}{\Vert w_j \Vert} \leq 1
\]
for every $j \geq j_1$. Now, let
\[
J_\lambda (\xi_j w_j)=\max_{0\leq \xi \leq 1} J_{\lambda}(\xi w_j)
\]
for some $\xi_j \in \left[0,1 \right]$. First of all, we notice
\[
J_\lambda (\xi_j w_j) \geq J_{\lambda}(v_j)= \eta \left(1-\Vert v_j \Vert_{L^2(\cM)}^2\right)-\int_\cM \alpha(\sigma) F(v_j(\sigma))\, dv_g
\]
for every $j \geq j_1$. Letting $j \to \infty$, recalling \eqref{eq59} and \eqref{eq60}, we obtain
\[
\liminf_{j \to \infty} J_{\lambda} (\xi_j w_j) \geq \eta.
\]
Being $\eta$ is arbitrary, we deduce
\begin{equation} \label{eq61}
\lim_{j \to \infty} J_\lambda(\xi_j w_j)=\infty.
\end{equation}
Secondly, from the facts that $J_\lambda (0)=0$ and $J_\lambda (w_j)$ is bounded we can deduce that actually $\xi_j  \in (0,1)$. Then
\[
\frac{d}{d\xi}J_\lambda(\xi w_j)\Bigr|_{\substack{\xi=\xi_j}}=0,
\]
which can be rewritten as
\begin{equation} \label{eq62}
\Vert \xi_j w_j \Vert^2-\lambda \Vert \xi_j w_j \Vert^2_{L^2(\cM)}=\int_\cM \alpha(\sigma) f(\xi_j w_j(\sigma))\xi_j w_j(\sigma) \, dv_g.
\end{equation}
Thus, from \eqref{eq56}, \eqref{eq62} and $(f_3)$, it follows that
\begin{align} \label{eq63}
2J_\lambda(\xi_jw_j)&= \int_\cM \alpha(\sigma) \left[ f(\xi_j w_j(\sigma)) \xi_j-w_j(\sigma)-F(w_j(\sigma))\right]\, dv_g \notag \\
& \leq \int_\cM \alpha(\sigma) \left[ f( w_j(\sigma)) -w_j(\sigma)-F(w_j(\sigma))\right]\, dv_g \leq C_1
\end{align}
Of course \eqref{eq61} and \eqref{eq63} are not compatible, therefore the sequence $(w_j)_j$ must be bounded. We can now use  \eqref{eq52} and  \cite[Proposition 2.2]{MR1411681} to conclude the proof.
\end{proof}
\section{Geometry of the $\nabla$-Theorem}
As mentioned at the beginning of the paper, our aim is to prove an existence result through the so-called $\nabla$-Theorem. In order to apply this tool, it is necessary to split the space in three closed subspace, two of finite dimension and one of infinite dimension. Furthermore, the functional is required to have a precise geometrical structure. In this section we are going to show that the functional $J_\lambda$ associated to problem \eqref{eq:Plambda} possesses these properties under the assumption we made on the nonlinearity $f$ and the potential $V$. Before doing that, for the sake of simplicity we fix some notation. Henceforth, $k$ and $h$ will be positive integers such that
\[
\lambda_{k-1}<\lambda_k=\ldots=\lambda_{k+h} < \lambda_{k+h+1}.
\]
We define
\[
X_1:=E_{k-1}, \quad X_2:=\spa \lbrace e_k, \ldots e_{k+h} \rbrace, \quad X_3:=E_{k+h}^\bot.
\]
We point out that the existence of such integers $h$ and $k$ is guaranteed by Theorem \ref{th1}.

Next Proposition will show the functional $J_\lambda$ verifies the desired geometrical property we need to apply the $\nabla$-Theorem.
\begin{proposition} \label{prop2}
If assumptions $(f_1)$ -- $(f_4)$ hold and $\lambda \in (\lambda_{k-1},\lambda_k$ then there are $\rho,R \in \R$, with $R>\rho>0$ such that
\[
\sup_{\lbrace w \in X_1 \mid \Vert w \Vert \leq R\rbrace \cup \lbrace w \in X_1 \oplus X_2 \mid \Vert w \Vert=R \rbrace} J_{\lambda} < \inf_{ \lbrace w \in X_2 \oplus X_3 \mid \Vert w \Vert =\rho\rbrace}  J_{\lambda}
\]
\end{proposition}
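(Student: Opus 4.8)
The plan is to verify the sphere--torus linking inequality by bounding $J_\lambda$ from below on the small sphere $\lbrace w\in X_2\oplus X_3:\|w\|=\rho\rbrace$ and from above on the boundary $\lbrace w\in X_1:\|w\|\leq R\rbrace\cup\lbrace w\in X_1\oplus X_2:\|w\|=R\rbrace$, using repeatedly the identifications $X_2\oplus X_3=E_{k-1}^{\perp}$ and $X_1\oplus X_2=E_{k+h}$ together with the two-sided Poincaré estimates of Lemma~\ref{lemma3}. First I would treat the lower bound: for $w\in X_2\oplus X_3=E_{k-1}^{\perp}$, Lemma~\ref{lemma3}(a) gives $\|w\|_{L^2(\cM)}^2\leq\lambda_k^{-1}\|w\|^2$, so that $\tfrac12\|w\|^2-\tfrac{\lambda}{2}\|w\|_{L^2(\cM)}^2\geq\tfrac12(1-\lambda/\lambda_k)\|w\|^2$ with $1-\lambda/\lambda_k>0$ since $\lambda<\lambda_k$. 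Estimating $\int_\cM\alpha F(w)\,dv_g$ through Lemma~\ref{lemma4}(i), the bound $\alpha\in L^\infty(\cM)$, and the continuous embedding $\h\hookrightarrow L^r(\cM)$, one obtains $J_\lambda(w)\geq c_1\|w\|^2-c_2\|w\|^r$ after absorbing the $\varepsilon$-term into the positive quadratic coefficient. On the sphere this reads $c_1\rho^2-c_2\rho^r$, which is strictly positive for $\rho$ small because $r>2$; this fixes $\rho$ and a value $\beta:=\inf_{\|w\|=\rho}J_\lambda>0$.

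Next I would handle the upper bound, which splits in two. On the disk $\lbrace w\in X_1:\|w\|\leq R\rbrace$ the argument is immediate: for $w\in X_1=E_{k-1}$, Lemma~\ref{lemma3}(b) gives $\|w\|_{L^2(\cM)}^2\geq\lambda_{k-1}^{-1}\|w\|^2$, whence $\tfrac12\|w\|^2-\tfrac{\lambda}{2}\|w\|_{L^2(\cM)}^2\leq\tfrac12(1-\lambda/\lambda_{k-1})\|w\|^2\leq0$ because $\lambda>\lambda_{k-1}$; since $\alpha\geq0$ and $F\geq0$ by $(f_4)$, it follows that $J_\lambda\leq0$ on $X_1$ for every $R$, with equality only at the origin. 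On the large sphere in $X_1\oplus X_2=E_{k+h}$, Lemma~\ref{lemma3}(b) only yields $\tfrac12\|w\|^2-\tfrac{\lambda}{2}\|w\|_{L^2(\cM)}^2\leq\tfrac12(1-\lambda/\lambda_k)\|w\|^2$, whose constant is now positive, so the quadratic part no longer helps and I must rely on the superlinear term. Invoking Lemma~\ref{lemma4}(iii), $\int_\cM\alpha F(w)\,dv_g\geq A_3\int_\cM\alpha|w|^r\,dv_g-A_4\|\alpha\|_{L^1(\cM)}$, this produces $J_\lambda(w)\leq C\|w\|^2-A_3\int_\cM\alpha|w|^r\,dv_g+A_4\|\alpha\|_{L^1(\cM)}$.

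The hard part will be the anti-coercivity of $J_\lambda$ on the finite-dimensional space $X_1\oplus X_2$, that is, producing a constant $c_0>0$ with $\int_\cM\alpha|w|^r\,dv_g\geq c_0\|w\|^r$ there. By $r$-homogeneity this reduces to showing that the continuous functional $w\mapsto\int_\cM\alpha|w|^r\,dv_g$ is strictly positive on the compact unit sphere of $X_1\oplus X_2$, where its minimum is attained; that minimum can vanish only if some nonzero element of $X_1\oplus X_2$ vanishes almost everywhere on $\lbrace\alpha>0\rbrace$, so the delicate step is to show that no nontrivial combination of the eigenfunctions $e_1,\dots,e_{k+h}$ vanishes on this positive-measure set. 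Granting $c_0>0$, the estimate above becomes $J_\lambda(w)\leq CR^2-A_3c_0R^r+A_4\|\alpha\|_{L^1(\cM)}$ on $\lbrace\|w\|=R\rbrace$, which tends to $-\infty$ as $R\to\infty$ since $r>2$; choosing $R>\rho$ sufficiently large then forces $J_\lambda<0$ on this sphere. Assembling the three pieces, the supremum of $J_\lambda$ over the boundary is $\leq0<\beta$, which is precisely the claimed inequality.
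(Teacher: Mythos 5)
Your proposal follows essentially the same route as the paper's proof: the same three estimates, namely quadratic coercivity on the small sphere in $X_2\oplus X_3=E_{k-1}^{\perp}$ via Lemma~\ref{lemma3}(a) and the $\varepsilon$-absorption from Lemma~\ref{lemma4}(i); non-positivity on the disk in $X_1=E_{k-1}$ via Lemma~\ref{lemma3}(b), $\lambda>\lambda_{k-1}$ and $F\geq 0$; and anti-coercivity on the large sphere in $X_1\oplus X_2=E_{k+h}$ via Lemma~\ref{lemma4}(iii) and finite-dimensionality. The only bookkeeping difference is in the subcritical estimate: you control $\int_\cM \alpha F(w)\,dv_g$ using $\Vert\alpha\Vert_{L^\infty(\cM)}$ and the embedding $\h\hookrightarrow L^r(\cM)$, while the paper applies H\"older against $\Vert\alpha\Vert_{L^{2^*/(2^*-2)}(\cM)}$ and $\Vert\alpha\Vert_{L^{2^*/(2^*-r)}(\cM)}$ together with the $L^{2^*}$ embedding; both are legitimate since $\alpha\in L^1(\cM)\cap L^\infty(\cM)$.

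One point deserves comment. The step you isolate as ``the hard part'' --- producing $c_0>0$ with $\int_\cM\alpha|w|^r\,dv_g\geq c_0\Vert w\Vert^r$ on $X_1\oplus X_2$, i.e.\ the definiteness of the seminorm $w\mapsto\left(\int_\cM\alpha|w|^r\,dv_g\right)^{1/r}$ on this finite-dimensional space --- is precisely what the paper's phrase ``all norms are equivalent'' silently assumes: equivalence of norms applies only to genuine norms. The issue is not cosmetic, since if some nonzero $w_0\in E_{k+h}$ vanished a.e.\ on $\lbrace\alpha>0\rbrace$, then $J_\lambda(tw_0)$ would reduce to its quadratic part, which can be positive and unbounded (along $e_k$, for instance, it equals $\tfrac{t^2}{2}(\lambda_k-\lambda)\to+\infty$), and the claimed geometry would fail. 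Neither you nor the paper proves this unique-continuation-type property of finite combinations of eigenfunctions on the positive-measure set $\lbrace\alpha>0\rbrace$; you leave it as ``granting $c_0>0$,'' the paper leaves it implicit. Your proposal is therefore correct to the same standard as the paper's own proof, and is arguably more transparent in making the needed assumption explicit.
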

\begin{proof}
We start showing
\[
\inf_{ \lbrace w \in X_2 \oplus X_3 \mid \Vert w \Vert =\rho\rbrace}  J_{\lambda} >0
\]
choosing $\rho$ adequately and observing that
\(
X_2 \oplus X_3=E_{k-1}^\bot.
\)
Applying twice the H\"older inequality, we get
\begin{equation} \label{eq64}
\int_\cM \alpha(\sigma) |w(\sigma)|^2 \, dv_g \leq \Vert \alpha \Vert_{L^{\frac{2^*}{2^*-2}}(\cM)} \Vert w \Vert_{L^{2^*}(\cM)}^2
\end{equation}
and
\begin{equation} \label{eq65}
\int_\cM \alpha(\sigma) |w(\sigma)|^r \, dv_g \leq \Vert \alpha \Vert_{L^{\frac{2^*}{2^*-r}}(\cM)} \Vert w \Vert_{L^{2^*}(\cM)}^r.
\end{equation}
From Lemma \ref{lemma4} $(i)$, \eqref{eq64} and \eqref{eq65} we obtain
\begin{align*}
J_\lambda (w) & \geq \frac{1}{2}\Vert w \Vert^2-\frac{\lambda}{2}\Vert w \Vert^2_{L^2(\cM)}-\varepsilon \int_\cM \alpha(\sigma) |w(\sigma)|^2\, dv_g-A_1^\varepsilon \int_\cM \alpha(\sigma) |w(\sigma)|^r\, dv_g \\
& \geq \frac{1}{2}\Vert w \Vert^2-\frac{\lambda}{2}\Vert w \Vert^2_{L^2(\cM)}-\varepsilon \Vert \alpha \Vert_{L^{\frac{2^*}{2^*-2}}(\cM)} \Vert w \Vert_{L^{2^*}(\cM)}^2 - A_1^\varepsilon \Vert \alpha \Vert_{L^{\frac{2^*}{2^*-r}}(\cM)} \Vert w \Vert_{L^{2^*}(\cM)}^r.
\end{align*}
Now, recalling $\h \hookrightarrow L^s(\cM)$ for every $s \in \left[2,2^*\right)$, it is possible to find $C>0$ such that
\[
J_\lambda(w) \geq \frac{1}{2}\Vert w \Vert^2-\frac{\lambda}{2}\Vert w \Vert^2_{L^2(\cM)}-\varepsilon C \Vert \alpha \Vert_{L^{\frac{2^*}{2^*-2}}(\cM)} \Vert w \Vert^2 - A_1^\varepsilon C \Vert \alpha \Vert_{L^{\frac{2^*}{2^*-r}}(\cM)} \Vert w \Vert^r.
\]
Finally, Lemma \ref{lemma3} yields
\[
J_\lambda (w)\geq \left[\frac{1}{2}\left( 1-\frac{\lambda}{\lambda_k} \right)-\varepsilon C \Vert \alpha \Vert_{L^{\frac{2^*}{2^*-2}}(\cM)} \right] \Vert w \Vert^2-A_1^\varepsilon C \Vert \alpha \Vert_{L^{\frac{2^*}{2^*-r}}(\cM)} \Vert w \Vert^r.
\]
At this point, choosing $\varepsilon>0$ such that
\[
\frac{1}{2}\left( 1-\frac{\lambda}{\lambda_k} \right)-\varepsilon C \Vert \alpha \Vert_{L^{\frac{2^*}{2^*-2}}(\cM)} >0
\]
and $\rho$ sufficiently small, the desired assertion is proved. On the other hand, it is possible to prove
\[
\sup_{\lbrace w \in X_1 \mid \Vert w \Vert \leq R\rbrace \cup \lbrace w \in X_1 \oplus X_2 \mid \ \Vert w \Vert=R \rbrace} J_{\lambda} \leq 0.
\]
Indeed, in the case $w \in X_1$, from Lemma \ref{lemma3} and $(f_4)$, recalling $\alpha \geq 0$ for a.e. $ \sigma \in \cM$, it follows that
\[
J_\lambda (w) \leq \frac{\lambda_{k-1}-\lambda}{2} \Vert w \Vert^2_{L^2(\cM)} \leq 0.
\]
Instead, when $w \in X_1\oplus X_2$ it suffices to use Lemma \ref{lemma4} $(iii)$ to obtain
\[
J_\lambda(w) \leq \frac{1}{2} \Vert w \Vert^2-A_3\int_\cM\alpha(\sigma) |w(\sigma)|^r\, dv_g+A_4\Vert \alpha \Vert_{L^1(\cM)}.
\]
Since $X_1 \oplus X_2$ has finite dimension all  norms are equivalent, then choosing $R>0$ big enough it is straightforward to see that \(r>2\) implies
\(
J_\lambda (w) \leq 0.
\)
\end{proof}
\section{Validity of the $(\nabla)$-condition}
This section is devoted to show the validity of the $(\nabla)$-condition  introduced in Definition \ref{def:nablacondition}. Before proving the main result of this section, we need two preliminary lemmas.
\begin{proposition} \label{prop3}
Assume Hypothesis $(f_1)-(f_4)$ hold, Then for every $\varrho >0$ there exists $\delta_\varrho>0$ such that for each $\lambda \in \left[ \lambda_{k-1}+\varrho,\lambda_{k+h+1}-\varrho \right]$ the only critical point of $J_\lambda$ constrained on $X_1 \oplus X_3$ with $J_\lambda \in \left[ -\delta_\varrho,\delta_\varrho\right]$ is the trivial one.
\end{proposition}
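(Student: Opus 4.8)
The plan is to argue by contradiction and to exploit that $X_1\oplus X_3=E_{k-1}\oplus E_{k+h}^\bot$ is exactly the orthogonal complement of the eigenspace $X_2$, so that on it the quadratic part of $J_\lambda$ is nondegenerate with a gap measured by $\varrho$. Set $Q_\lambda(w):=\Vert w\Vert^2-\lambda\Vert w\Vert_{L^2(\cM)}^2$, so that $J_\lambda(w)=\tfrac12 Q_\lambda(w)-\int_\cM\alpha F(w)\,dv_g$. Since $e_1,\dots,e_{k-1}$ and $e_{k+h+1},\dots$ are orthogonal for both $\langle\cdot,\cdot\rangle$ and $\langle\cdot,\cdot\rangle_{L^2(\cM)}$, the splitting $w=a+b$ with $a\in X_1$ and $b\in X_3$ is orthogonal for both products and $Q_\lambda(w)=Q_\lambda(a)+Q_\lambda(b)$. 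Lemma~\ref{lemma3} together with $\lambda_{k-1}+\varrho\leq\lambda\leq\lambda_{k+h+1}-\varrho$ produces constants $c_1,c_2>0$, depending only on $\varrho$, with $Q_\lambda(a)\leq -c_1\Vert a\Vert^2$ for $a\in X_1$ and $Q_\lambda(b)\geq c_2\Vert b\Vert^2$ for $b\in X_3$, uniformly in $\lambda$ in that interval. Assuming the statement false, I obtain $\varrho>0$, parameters $\lambda_n\in[\lambda_{k-1}+\varrho,\lambda_{k+h+1}-\varrho]$ and nontrivial constrained critical points $w_n=a_n+b_n$ with $J_{\lambda_n}(w_n)\to0$; up to a subsequence $\lambda_n\to\bar\lambda$ in the same interval.

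Because $w_n\in X_1\oplus X_3$, I may use $w_n$ itself as a test function, so that $J_{\lambda_n}'(w_n)[w_n]=0$, i.e. $Q_{\lambda_n}(w_n)=\int_\cM\alpha f(w_n)w_n\,dv_g$, and, exactly as in Proposition~\ref{prop1}, $2J_{\lambda_n}(w_n)=\int_\cM\alpha\bigl(f(w_n)w_n-2F(w_n)\bigr)\,dv_g$. Since $(f_4)$ yields $f(t)t-2F(t)\geq\frac{r-2}{r}f(t)t\geq0$, this forces $\int_\cM\alpha f(w_n)w_n\,dv_g\to0$; likewise $J_{\lambda_n}(w_n)\geq\frac{r-2}{2}\int_\cM\alpha F(w_n)\,dv_g$ together with Lemma~\ref{lemma4}$(iii)$ bounds $\int_\cM\alpha (w_n^+)^r\,dv_g$ uniformly in $n$. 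Next I record a uniform lower bound: testing the equation separately with $a_n$ and $b_n$, using $c_1\Vert a_n\Vert^2\leq-Q_{\lambda_n}(a_n)$ and $c_2\Vert b_n\Vert^2\leq Q_{\lambda_n}(b_n)$, and estimating the right-hand sides with Lemma~\ref{lemma4}$(i)$, $\alpha\in L^\infty(\cM)$ and the Sobolev embedding, one is led to
\[
c_0\Vert w_n\Vert\leq 4\varepsilon C\Vert w_n\Vert+2rA_1^\varepsilon C\Vert w_n\Vert^{r-1},\qquad c_0:=\min\{c_1,c_2\}.
\]
Fixing $\varepsilon$ with $4\varepsilon C<c_0/2$ and dividing by $\Vert w_n\Vert\neq0$ gives $\Vert w_n\Vert\geq\rho_0>0$ with $\rho_0$ independent of $n$ and of $\lambda$.

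The same two inequalities, but now using the uniform bound on $\int_\cM\alpha(w_n^+)^r\,dv_g$ to dominate the superlinear term by $C'\Vert b_n\Vert$ (respectively $C'\Vert a_n\Vert$), become $c_0\Vert w_n\Vert\leq 4\varepsilon C\Vert w_n\Vert+\mathrm{const}$, so that $(w_n)_n$ is bounded in $\h$. Hence, along a subsequence, $w_n\weakto w$ in $\h$ and $w_n\to w$ in $L^s(\cM)$ for every $s\in[2,2^*)$, with $w\in X_1\oplus X_3$ since this subspace is weakly closed by Remark~\ref{rem1}. Writing the constrained equation as $w_n=P_{X_1\oplus X_3}\Delta_g^{-1}\bigl(\lambda_n w_n+\alpha f(w_n)\bigr)$ and noting that $\lambda_n w_n+\alpha f(w_n)$ converges strongly (continuity of the Nemytskii operator via Lemma~\ref{lemma4}$(ii)$, plus $w_n\to w$ in $L^2(\cM)$), the compactness of $\Delta_g^{-1}$ upgrades this to $w_n\to w$ strongly in $\h$. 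Passing to the limit in the equation shows that $w$ is a constrained critical point of $J_{\bar\lambda}$, and by strong convergence $J_{\bar\lambda}(w)=\lim_n J_{\lambda_n}(w_n)=0$.

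It remains to identify $w$. From $J_{\bar\lambda}(w)=\int_\cM\alpha\bigl(\tfrac12 f(w)w-F(w)\bigr)\,dv_g\geq\frac{r-2}{2r}\int_\cM\alpha f(w)w\,dv_g\geq0$ and $J_{\bar\lambda}(w)=0$, the nonnegative integrand $\alpha f(w)w$ vanishes a.e.; since $f(t)t>0$ for $t>0$ this gives $w^+=0$ a.e. on $\{\alpha>0\}$, whence $\alpha f(w)\equiv0$ and the equation reduces to $\langle w,v\rangle=\bar\lambda\langle w,v\rangle_{L^2(\cM)}$ for all $v\in X_1\oplus X_3$. Testing with $a$ and with $b$ and invoking Lemma~\ref{lemma3} with the strict gaps $\lambda_{k-1}+\varrho\leq\bar\lambda\leq\lambda_{k+h+1}-\varrho$ forces $a=b=0$, i.e. $w=0$; then $\Vert w_n\Vert\to\Vert w\Vert=0$, contradicting $\Vert w_n\Vert\geq\rho_0>0$. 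I expect the central difficulty to be precisely this combination of steps: because the $X_1$ and $X_3$ components may nearly cancel in $w=a+b$, the energy of a nontrivial constrained critical point cannot be bounded below directly in terms of $\Vert w_n\Vert$, which is why one must route the argument through the compactness of $\Delta_g^{-1}$ and the spectral gap instead of a single elementary estimate.
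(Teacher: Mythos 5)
Your proposal is correct: every step checks out against the paper's hypotheses (the spectral-gap constants $c_1,c_2$ from Lemma~\ref{lemma3}, the growth bounds from Lemma~\ref{lemma4}, the orthogonality of the splitting $w=a+b$ in both inner products, and the use of nontriviality exactly where you divide by $\Vert w_n\Vert$), and it proves the stated uniformity in $\lambda$. The core ingredients coincide with the paper's own proof --- contradiction with a sequence of nontrivial constrained critical points of vanishing energy, the identity $2J_{\lambda_n}(w_n)=\int_\cM\alpha\left(f(w_n)w_n-2F(w_n)\right)dv_g$ combined with $(f_4)$, and the gap estimate from Lemma~\ref{lemma3} (your testing with $a_n$ and $b_n$ separately is equivalent to the paper's single test function $w_{1,j}-w_{3,j}$) --- but your endgame is organized differently. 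The paper never proves strong convergence in this proposition: it derives the inequality $\Vert w_j\Vert\lesssim\Vert\alpha f(w_j)\Vert_{L^{r'}(\cM)}$, shows boundedness, passes to a weak limit with $\alpha f(w_\infty)=0$, and concludes by a dichotomy ($w_j\to0$ in $\h$, excluded by arbitrariness of $\varepsilon$, versus $\Vert w_j\Vert\geq\eta$, excluded since $\Vert\alpha f(w_j)\Vert_{L^{r'}(\cM)}\to0$). You instead front-load a uniform lower bound $\Vert w_n\Vert\geq\rho_0$ on nontrivial constrained critical points, then upgrade to strong $\h$-convergence via the resolvent form $w_n=P_{X_1\oplus X_3}\Delta_g^{-1}\left(\lambda_n w_n+\alpha f(w_n)\right)$ and Nemytskii continuity --- which is precisely the mechanism the paper reserves for Proposition~\ref{prop 5} --- and identify the limit as a zero-energy constrained critical point of $J_{\bar\lambda}$, killed by the spectral gap. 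Your route costs the extraction of $\lambda_n\to\bar\lambda$ and the limiting-functional argument, but buys a cleaner contradiction and a quantitative lower bound of independent interest; the paper's route stays at the level of norm inequalities but needs the somewhat ad hoc two-case analysis. One cosmetic slip: when you dominate the superlinear terms in the boundedness step, the bounds should read $C'\Vert a_n\Vert$ for the test with $a_n$ and $C'\Vert b_n\Vert$ for the test with $b_n$ (your ``respectively'' has them swapped), and the compactness of $\Delta_g^{-1}$ you invoke is not actually needed there --- continuity suffices, since the arguments converge strongly in $L^{r'}(\cM)$ and $L^2(\cM)$.
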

\begin{proof}
By contradiction we suppose the statement false. So, we are allowed to assume the existence of $\tilde{\varrho}>0$, two sequences $\mu_j \subset \left[ \lambda_{k-1}+\tilde{\varrho}, \lambda_{k+h+1}-\tilde{\varrho} \right]$ and $(w_j)_j \subset X_1 \oplus X_3$ of critical points, i.e.
\begin{equation} \label{eq66}
\langle \nabla J_{\mu_j} (w_j), \varphi \rangle=0 \quad \mbox{for any} \ \varphi \in X_1\oplus X_3
\end{equation}
such that
\begin{equation} \label{eq67}
J_{\mu_j}(w_j) \to 0 \quad \mbox{as} \ j \to \infty.
\end{equation}
Since $(w_j)_j \subset X_1 \oplus X_3$, we can take $\varphi =w_j$ as test function in \eqref{eq66}. As a result of that, we have\begin{equation} \label{eq68}
0=\Vert w_j \Vert^2-\mu_j \Vert w_j \Vert_{L^2(\cM)}^2-\int_\cM \alpha(\sigma) f(w_j(\sigma))w_j(\sigma)\,dv_g
\end{equation}
Then, we notice that \eqref{eq68} can be rewritten as
\[
0=2J_{\mu_j} (w_j)+2 \int_\cM \alpha(\sigma)F(w_j(\sigma))\, dv_g-\int_\cM \alpha(\sigma) f(w_j(\sigma))w_j(\sigma)\,dv_g.
\]
Exploiting $(f_4)$ in \eqref{eq68} we obtain
\begin{equation} \label{eq69}
0 \leq 2J_{\mu_j}(w_j)+ (2-r)\int_\cM \alpha(\sigma) F(w_j(\sigma))\, dv_g.
\end{equation}
Reordering the terms in \eqref{eq69} we get
\begin{equation} \label{eq70}
0 \leq (r-2)\int_\cM \alpha(\sigma) F(w_j(\sigma))\, dv_g \leq 2J_{\mu_j}(w_j).
\end{equation}
Putting together \eqref{eq67} and \eqref{eq70} we obtain
\begin{equation} \label{eq71}
\lim_{j \to \infty} \int_\cM \alpha(\sigma) F(w_j(\sigma))\, dv_g=0.
\end{equation}
Now, recalling $w_j \in X_1 \oplus X_3$ for all $j \in \N$, we are able to find $w_{1,j} \in X_1$ and $w_{3,j}\in X_3$ such that
\(
w_j=w_{1,j}+w_{3,j}
\).
At this point, on one hand we test \eqref{eq66} with $\varphi=w_{1,j}-w_{3,j}$ and exploiting the properties of orthogonality of $w_{1,j}$ and $w_{3,j}$ we have
\begin{align} \label{eq72}
0&=\langle \nabla J_{\mu_j}(w_j),w_{1,j}-w_{3,j}\rangle \notag \\
&= \Vert w_{1,j} \Vert^2-\Vert w_{3,j}\Vert^2-\mu_j\Vert w_{1,j}\Vert_{L^2(\cM)}^2+\mu_j \Vert w_{3,j}\Vert_{L^2(\cM)}^2 \\
&\qquad {} -\int_\cM \alpha(\sigma) f(w_j(\sigma))(w_{1,j}(\sigma)-w_{3,j}(\sigma))\,dv_g. \notag
\end{align}
Rearranging \eqref{eq72} and applying Lemma \ref{lemma3} we get
\begin{align} \label{eq73}
\int_\cM \alpha(\sigma) f(w_j(\sigma))(w_{1,j}(\sigma)-w_{3,j}(\sigma))\,dv_g &= \Vert w_{1,j} \Vert^2-\Vert w_{3,j}\Vert^2-\mu_j\Vert w_{1,j}\Vert_{L^2(\cM)}^2 \notag \\
&+\mu_j \Vert w_{3,j}\Vert_{L^2(\cM)}^2 \notag \\
&\leq \Vert w_{1,j} \Vert^2-\Vert w_{3,j}\Vert^2-\frac{\mu_j}{\lambda_{k-1}}\Vert w_{1,j}\Vert^2\notag \\
&+\frac{\mu_j}{\lambda_{k+h+1}} \Vert w_{3,j}\Vert^2  \\
&=\frac{\lambda_{k-1}-\mu_j}{\lambda_{k-1}}\Vert w_{1,j}\Vert^2+\frac{\mu_j-\lambda_{k+h+1}}{\lambda_{\lambda_{k+h+1}}} \Vert w_{3,j} \Vert^3 \notag \\
&<-\frac{\tilde{\varrho}}{\lambda_{k-1}}\Vert w_{1,j}\Vert^2-\frac{\tilde{\varrho}}{\lambda_{k+h+1}}\Vert w_{3,j}\Vert^2 \notag \\
&<-\frac{2\tilde{\varrho}}{\lambda_{k+h+1}}\Vert w_j \Vert^2.
\end{align}
On the other hand, thanks to H\"older and the continuous embedding $\h \hookrightarrow L^r(\cM)$, we have
\begin{align} \label{eq74}
\left| \int_\cM \alpha(\sigma) f(w_j(\sigma))(w_{1,j}(\sigma)-w_{3,j}(\sigma))\,dv_g \right| &\leq \Vert \alpha f(w_j) \Vert_{L^{r'}(\cM)} \Vert w_{1,j}-w_{3,j}\Vert_{L^r(\cM)} \notag \\
&\leq C \Vert \alpha f(w_j)\Vert_{L^{r'}(\cM)} \Vert w_j \Vert
\end{align}
for some $C>0$, where we used
\[
\langle w_{1,j}-w_{3,j},w_{1,j}-w_{3,j}\rangle =\Vert w_{1,j}\Vert^2-\Vert w_{3,j} \Vert^2=\Vert w_j \Vert^2.
\]
Coupling \eqref{eq73} and \eqref{eq74} we have
\[
-C \Vert \alpha f(w_j) \Vert_{L^{r'}(\cM)} \Vert w_j \Vert \leq - \frac{2 \tilde{\varrho}}{\lambda_{k+h+1}}\Vert w_j \Vert^2
\]
from which it follows that
\begin{equation} \label{eq75}
\frac{2 \tilde{\varrho}}{\lambda_{k+h+1}} \Vert w_j \Vert \leq C \Vert \alpha f(w_j) \Vert_{L^{r'}(\cM)}
\end{equation}
Then, we use Lemma \ref{lemma4} $(ii)$ and we obtain
\begin{equation} \label{eq76}
\int_\cM \left| \alpha (\sigma)f(w_j(\sigma)) \right| ^{r'}\, dv_g \leq \int _\cM \left[ \alpha(\sigma) \left( A_2+A_2^{\varepsilon} |w_j|^{r-1} \right)\right]^{\frac{r}{r-1}}.
\end{equation}
Recalling that for any $a,b \geq 0$ we have
\[
(a+b)^r\leq 2^r(a^r+b^r),
\]
from \eqref{eq76} it follows
\begin{equation} \label{eq77}
\int_\cM \left| \alpha (\sigma)f(w_j(\sigma)) \right| ^{r'}\, dv_g \leq \left(2 A_2 \Vert \alpha \Vert_{L^{r'}(\cM)} \right)^{r'}+\left(2A_2^\varepsilon \right)^{r'}\int_\cM \left(\alpha(\sigma) \right)^{r'}|w_j|^r\, dv_g.
\end{equation}
Finally, we exploit Lemma \ref{lemma4} in \eqref{eq77} and we obtain
\begin{align} \label{eq78}
\int_\cM \left| \alpha (\sigma)f(w_j(\sigma)) \right| ^{r'}\, dv_g & \leq \left(2 A_2 \Vert \alpha \Vert_{L^{r'}(\cM)} \right)^{r'} + \frac{A_4}{A_3} \left(2 A_2^\varepsilon \right)^{r'} \Vert \alpha\Vert_{L^{r'}(\cM)}^{r'} \notag \\
& + \left(2 A_2^\varepsilon \right)^{r'} \frac{A_4}{A_3} \Vert \alpha\Vert_{L^{\infty}(\cM)}^{r'-1} \int_{\cM} \alpha(\sigma)F(w_j(\sigma))\, dv_g.
\end{align}
From \eqref{eq75}, keeping into account \eqref{eq71} and \eqref{eq78}, we can deduce $(w_j)_j$ is bounded in $\h$. Hence, up to a subsequence
\[
w_j \rightharpoonup w_{\infty} \quad \mbox{in} \ \h.
\]
Furthermore, recalling that $\h \hookrightarrow L^r(\cM)$ is compact, we have
\[
w_j \to w_{\infty} \quad \mbox{in} \ L^r(\cM)
\]
\[
w_j(\sigma) \to w_{\infty}(\sigma) \quad \mbox{for a.e.} \ \sigma \in \cM
\]
as $j \to \infty$. Now, from \eqref{eq75}, Lemma \ref{lemma4} $(i)$ and the H\"older inequality it follows
\begin{align} \label{eq79}
0&<\frac{2 \tilde{\varrho}}{C \lambda_{k+h+1}} \leq \frac{\Vert \alpha f(w_j) \Vert_{L^{r'}(\cM)}}{\Vert w_j \Vert}  \notag \\
&\leq \frac{\left( \displaystyle\int_\cM \left[ \alpha(\sigma)\left( 2 \varepsilon |w_j|+r A_1^\varepsilon |w_j|^{r-1}\right)\right]^{\frac{r}{r-1}}\right)^{\frac{r-1}{r}}}{\Vert w_j \Vert} \\
&\leq \frac{\left[ \left( 4 \varepsilon \right)^{\frac{r}{r-1}}  \Vert \alpha \Vert^{\frac{r}{r-1}}_{L^{\frac{2r}{r-1}}(\cM)}\left(\displaystyle\int_\cM |w_j|^{\frac{2r}{r-1}}\,dv_g\right)^{\frac{1}{2}}+\left( 2rA_1^\varepsilon\right)^{\frac{r}{r-1}}\displaystyle\int_\cM |w_j|^r\,dv_g\right]^{\frac{r-1}{r}}}{\Vert w_j \Vert} \notag.
\end{align}
Recalling that $\h \hookrightarrow L^s(\cM)$ is continuous for every $s \in \left[ 2,2^* \right]$ and that for every $a,b \geq 0$ and $0<p\leq 1$ we have
\(
(a+b)^p\leq a^p+b^p,
\)
we deduce from \eqref{eq79} that
\begin{equation} \label{eq80}
0<\frac{2 \tilde{\varrho}}{C \lambda_{k+h+1}} \leq \tilde{C}\left(4\varepsilon \Vert \alpha \Vert_{L^{\frac{2r}{r-1}}}+2rA_1^\varepsilon \Vert w_j \Vert^{r-2} \right)
\end{equation}
for some optimal $\tilde{C}>0$. With similar estimates it is straightforward to check that
\[
\left| \alpha(\sigma) f(w_j(\sigma))\right|^{\frac{r}{r-1}} \leq C_1^\varepsilon |w_j(\sigma)|^{\frac{2r}{r-1}}+C_2^\varepsilon |w_j(\sigma)|^r
\]
and
\[
|\alpha(\sigma)| \leq C_3^\varepsilon |w_j(\sigma)|^2+C_4^\varepsilon |w_j(\sigma)|^r
\]
choosing adequately $C_1^\varepsilon, C_2^\varepsilon, C_3^\varepsilon, C_4^\varepsilon>0$. Hence, the general Lebesgue dominated convergence Theorem  \cite[Section 4.4, Theorem 19]{MR1013117} implies
\begin{equation} \label{eq81}
\lim_{j \to \infty} \int_\cM \alpha(\sigma)F(w_j(\sigma))\, dv_g=\int_\cM \alpha(\sigma)F(w_{\infty}(\sigma))\, dv_g
\end{equation}
and
\begin{equation} \label{eq82}
\lim_{j \to \infty} \int_\cM |\alpha(\sigma) f(w_j(\sigma))|^{\frac{r}{r-1}}\, dv_g =\int_\cM |\alpha(\sigma) f(w_\infty(\sigma))|^{\frac{r}{r-1}}\,dv_g.
\end{equation}
Coupling \eqref{eq71} and \eqref{eq81}, keeping into account $(f_4)$, we see that $w_\infty=0$ is the only admissible case. At this point only two possible cases are possible. The first one is that $w_j \to 0$ in $\h$, but if that were true, letting $j \to \infty$, then we would have
\[
0<\frac{2 \tilde{\varrho}}{C \lambda_{k+h+1}} \leq 4\varepsilon \tilde{C} \Vert \alpha \Vert_{L^{\frac{2r}{r-1}}}
\]
which is impossible since $\varepsilon >0$ is arbitrary. The second one is that there exist $\eta>$ such that $\Vert w_j \Vert \geq \eta$ for each $j \in \N$. In this case, firstly we notice that that from $w_\infty=0$ and $f(0)=0$ it follows
\begin{equation} \label{eq83}
\lim_{j \to \infty} \int_\cM |\alpha(\sigma) f(w_j(\sigma))|^{\frac{r}{r-1}}\, dv_g =0.
\end{equation}
Then, thanks to \eqref{eq83}, \eqref{eq75} becomes
\[
0<\frac{2 \tilde{\varrho \eta}}{C \lambda_{k+h+1}} \leq 0,
\]
which is clearly a contradiction.
\end{proof}
In the sequel, given a closed subspace $Y$ of $\h$ we will denote with
\(
P_{Y}\colon \h \to Y
\)
the usual orthogonal projection.
\begin{proposition} \label{prop4}
Suppose $f$ satisfies $(f_1)-(f_4)$, $\lambda \in \R$ and let $(w_j)_j \subset \h$ be a sequence such that
\begin{equation} \label{eq84}
\left( J_{\lambda}(w_j)\right) \quad \mbox{is bounded}
\end{equation}
\begin{equation} \label{eq85}
P_{X_2}w_j \to 0 \quad \mbox{in} \ \h
\end{equation}
\begin{equation} \label{eq86}
P_{X_1 \oplus X_3} \nabla J_{\lambda}(w_j) \to 0 \quad \mbox{in} \ \h.
\end{equation}
Then $(w_j)_j$ is bounded in $\h$.
\end{proposition}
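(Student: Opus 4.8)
The plan is to argue by contradiction, mirroring the scheme used for the Palais--Smale condition in Proposition \ref{prop1}. Suppose the conclusion fails; then, along a subsequence, $\Vert w_j \Vert \to \infty$. Normalising, I set $u_j := w_j / \Vert w_j \Vert$, so that $\Vert u_j \Vert = 1$ and, up to a further subsequence, $u_j \weakto u$ in $\h$, $u_j \to u$ in $L^s(\cM)$ for every $s \in [2,2^*)$, and $u_j \to u$ almost everywhere. Dividing \eqref{eq85} by $\Vert w_j \Vert$ gives $P_{X_2} u_j \to 0$ in $\h$, whence $P_{X_2} u = 0$ and $u \in X_1 \oplus X_3$. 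I would then split into the cases $u \neq 0$ and $u = 0$.

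When $u \neq 0$ only \eqref{eq84} is needed. On $\{u \neq 0\}$ one has $|w_j(\sigma)| \to \infty$ almost everywhere, and since Lemma \ref{lemma4}$(iii)$ forces $F(t)/t^2 \to \infty$ as $t \to \infty$ (because $r > 2$), Fatou's Lemma gives $\int_\cM \alpha F(w_j)/\Vert w_j \Vert^2 \, dv_g \to \infty$. On the other hand, from
\[
\frac{J_\lambda(w_j)}{\Vert w_j \Vert^2} = \frac12 - \frac{\lambda}{2}\Vert u_j \Vert_{L^2(\cM)}^2 - \int_\cM \alpha(\sigma)\frac{F(w_j(\sigma))}{\Vert w_j \Vert^2}\, dv_g,
\]
together with the boundedness of $\Vert u_j \Vert_{L^2(\cM)}$ and $J_\lambda(w_j)/\Vert w_j \Vert^2 \to 0$, that last integral must stay bounded, a contradiction.

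For $u = 0$ I would reproduce the mountain--pass--value device of Proposition \ref{prop1}. Fixing $\eta > 0$ and setting $v_j := \sqrt{2\eta}\,u_j$, the convergence $v_j \to 0$ in $L^s(\cM)$ combined with Lemma \ref{lemma4}$(i)$ and dominated convergence yields $\int_\cM \alpha F(v_j)\,dv_g \to 0$, so $J_\lambda(v_j) \to \eta$. Since $v_j = (\sqrt{2\eta}/\Vert w_j \Vert)\,w_j$ has coefficient in $[0,1]$ for $j$ large, the value $J_\lambda(\xi_j w_j) := \max_{\xi \in [0,1]} J_\lambda(\xi w_j)$ satisfies $J_\lambda(\xi_j w_j) \to \infty$. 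As $J_\lambda(0) = 0$ and $J_\lambda(w_j)$ is bounded by \eqref{eq84}, this maximum is attained at an interior $\xi_j \in (0,1)$, so $J_\lambda'(\xi_j w_j)[\xi_j w_j] = 0$, and therefore
\[
2 J_\lambda(\xi_j w_j) = \int_\cM \alpha(\sigma)[ f(\xi_j w_j)\xi_j w_j - 2 F(\xi_j w_j)]\, dv_g \leq \int_\cM \alpha(\sigma)[ f(w_j) w_j - 2 F(w_j)]\, dv_g,
\]
the inequality following from $(f_3)$ and $0 \leq \xi_j \leq 1$.

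It then remains to bound the right--hand side uniformly in $j$, and this is the step where the two hypotheses \eqref{eq85}--\eqref{eq86} take over the role of the full Palais--Smale information. Writing $\int_\cM \alpha[f(w_j)w_j - 2F(w_j)]\,dv_g = 2 J_\lambda(w_j) - J_\lambda'(w_j)[w_j]$ and decomposing $w_j = P_{X_1 \oplus X_3} w_j + P_{X_2} w_j$, the component along $X_1 \oplus X_3$ is harmless, since $J_\lambda'(w_j)[P_{X_1 \oplus X_3} w_j] = \langle P_{X_1\oplus X_3}\nabla J_\lambda(w_j), P_{X_1 \oplus X_3} w_j \rangle$ is controlled by $\Vert P_{X_1 \oplus X_3}\nabla J_\lambda(w_j)\Vert\,\Vert w_j \Vert$ through \eqref{eq86}. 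The genuinely delicate term is $J_\lambda'(w_j)[P_{X_2} w_j]$: by \eqref{eq49} it equals $\Vert P_{X_2} w_j \Vert^2 - \lambda \Vert P_{X_2} w_j \Vert_{L^2(\cM)}^2 - \int_\cM \alpha f(w_j) P_{X_2} w_j\, dv_g$, where the first two summands vanish by \eqref{eq85} and the finite-dimensionality of $X_2$, while the nonlinear integral has to be estimated by H\"older's inequality, Lemma \ref{lemma4}$(ii)$, and the equivalence of norms on $X_2$. The main obstacle is exactly to reconcile the superlinear growth $\Vert \alpha f(w_j)\Vert_{L^{r'}(\cM)} \leq C(1 + \Vert w_j \Vert^{r-1})$ of this integral with the decay $\Vert P_{X_2} w_j \Vert \to 0$ so as to reach a bound independent of $j$; it is here that the \emph{strong} convergence in \eqref{eq85}, not merely boundedness of $P_{X_2}w_j$, is essential. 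Once such a uniform bound is in hand, it contradicts $J_\lambda(\xi_j w_j) \to \infty$, and the boundedness of $(w_j)_j$ follows.
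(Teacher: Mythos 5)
Your overall frame (contradiction, normalisation $u_j = w_j/\Vert w_j\Vert$, the dichotomy $u \neq 0$ / $u = 0$ borrowed from Proposition \ref{prop1}) is reasonable, and the case $u \neq 0$ does go through using only \eqref{eq84}, exactly as in the paper's proof of the Palais--Smale condition (with the same implicit use of the positivity of $\alpha$ on the set where $u$ survives). The genuine gap is in the case $u = 0$, at precisely the point you flag as ``the main obstacle'' and then leave conditional: the uniform-in-$j$ bound on $\int_\cM \alpha\left[ f(w_j)w_j - 2F(w_j)\right] dv_g$ simply cannot be extracted from \eqref{eq84}--\eqref{eq86}. Writing this quantity as $2J_\lambda(w_j) - J'_\lambda(w_j)[w_j]$, the term you call harmless is already not harmless: \eqref{eq86} only gives $\vert\langle P_{X_1\oplus X_3}\nabla J_\lambda(w_j), w_j\rangle\vert \leq o(1)\,\Vert w_j\Vert$, which may diverge when $\Vert w_j \Vert \to \infty$ because no rate is available for the $o(1)$. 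Worse, for the term $\int_\cM \alpha f(w_j)\, P_{X_2}w_j\, dv_g$ the best available estimate is $\Vert \alpha f(w_j)\Vert_{L^{r'}(\cM)}\Vert P_{X_2}w_j\Vert_{L^r(\cM)} \leq C\left(1 + \Vert w_j\Vert^{r-1}\right) o(1)$, and $o(1)\,\Vert w_j\Vert^{r-1}$ can blow up along every subsequence: \eqref{eq85} provides no decay rate for $\Vert P_{X_2}w_j\Vert$ to play against the growth $\Vert w_j\Vert^{r-1}$. Since the maximum device requires a bound that stays bounded in $j$ (a bound $B_j \to \infty$, however slowly, yields no contradiction with $J_\lambda(\xi_j w_j)\to\infty$), the proof cannot be closed along these lines; this is a structural failure of the strategy, not a technical detail to be filled in.

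This is exactly why the paper abandons the uniform-bound/maximum trick in this proposition and works instead with normalised quantities. It divides the identity for $\langle P_{X_1\oplus X_3}\nabla J_\lambda(w_j), w_j\rangle$ by $\Vert w_j\Vert^r$, so that every error term above is $o(\Vert w_j\Vert^r)$ and disappears in the limit; for the $X_2$-term this uses that $\Vert P_{X_2}w_j\Vert_{L^\infty(\cM)}\to 0$, which follows from \eqref{eq85}, the finite dimensionality of $X_2$ and the boundedness of the eigenfunctions. The squeeze coming from $(f_4)$, namely $0 \geq (2-r)\int_\cM\alpha F(w_j)\,dv_g/\Vert w_j\Vert^r \geq o(1)$, then gives $\int_\cM\alpha F(w_j)\,dv_g/\Vert w_j\Vert^r \to 0$, hence $w_\infty = 0$ via Lemma \ref{lemma4} $(iii)$. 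In a second pass the paper divides by $\Vert w_j\Vert^2$: the energy bound \eqref{eq84} together with $w_j/\Vert w_j\Vert \to 0$ in $L^2(\cM)$ forces $\int_\cM\alpha F(w_j)\,dv_g/\Vert w_j\Vert^2 \to \tfrac{1}{2}$, while the same gradient identity forces this limit to be $0$ --- the desired contradiction. If you wish to keep your two-case structure, the case $u = 0$ must be replaced by an argument of this kind; the device from Proposition \ref{prop1} is not salvageable here because its input (boundedness of $J'_\lambda(w_j)[w_j]$ up to constants) is strictly stronger than what \eqref{eq85}--\eqref{eq86} provide.
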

\begin{proof}
We argue by contradiction, and we suppose that
\begin{equation} \label{eq87}
\Vert w_j \Vert \to \infty
\end{equation}
as $j \to \infty$. Normalizing we assume up to a subsequence
\[
\frac{w_j}{\Vert w_j \Vert} \rightharpoonup w_\infty \quad \mbox{in} \ \h
\]
and
\begin{equation} \label{eq88}
\frac{w_j}{\Vert w_j \Vert} \to w_\infty \quad \mbox{in} \ L^s(\cM)
\end{equation}
as $j \to \infty$ for all $s \in \left[2,2^*\right)$.

Clearly, we can write
\begin{equation}\label{eq89}
w_j=P_{X_2}w_j+P_{X_1\oplus X_3}
\end{equation}
with $P_{X_2}w_j \to 0$. Recalling \eqref{eq51}, \eqref{eq52} and\eqref{eq89} we have
\begin{align} \label{eq90}
\langle P_{X_1 \oplus X_3} \nabla J_\lambda (w_j), w_j \rangle&=\langle \nabla J_\lambda (w_j), w_j \rangle-\langle P_{X_2} \nabla J_\lambda (w_j), w_j \rangle \notag \\
&= \Vert w_j \Vert^2-\lambda \Vert w_j \Vert^2_{L^2(\cM)}-\int_\cM \alpha(\sigma) f(w_j(\sigma)) w_j(\sigma) \, dv_g \\
&-\langle P_{X_2} \left( w_j-\Delta_g^{-1} \left( \lambda w_j + \alpha f(w_j)\right)\right),w_j \rangle \notag
\end{align}
By orthogonality we get
\begin{displaymath}
\langle P_{X_2}w, v \rangle =\langle P_{X_2} w, P_{X_1+\oplus X_3} v+P_{X_2}v \rangle=  \langle P_{X_2} w, P_{X_2}v \rangle
\end{displaymath}
and
\[
\langle w,P_{X_2}v \rangle =\langle  P_{X_1+\oplus X_3} w+P_{X_2},P_{X_2} v \rangle= \langle P_{X_2} w, P_{X_2}v \rangle
\]
for every $w,v \in \h$, which means that $P_{X_2}$ is a symmetric operator. In virtue of that, we have
\begin{align} \label{eq91}
\langle P_{X_2} \left( w_j-\Delta_g^{-1} \left( \lambda w_j + \alpha f(w_j)\right)\right),w_j \rangle & = \Vert P_{X_2}w_j \Vert^2-\lambda \langle \Delta^{-1}_g w_j,P_{X_2}w_j \rangle \notag \\
&-\langle \Delta_g^{-1}\left( \alpha f(w_j) \right),P_{X_2} w_j \rangle.
\end{align}
Recalling \eqref{eq50} we get
\begin{multline} \label{eq92}
\lambda \langle P_{X_2}w_j,\Delta_g^{-1} w_j\rangle+\langle P_{X_2}w_j, \Delta_g^{-1} \left( \alpha f(w_j) \right) \rangle \\
= \lambda \Vert P_{X_2} w_j \Vert_{L^2(\cM)}^2+\int_\cM \alpha(\sigma) f(w_j(\sigma))P_{X_2}w_j(\sigma)\,dv_g
\end{multline}
Inserting \eqref{eq91} and \eqref{eq92} in \eqref{eq90} we obtain
\begin{align} \label{eq93}
\langle P_{X_1 \oplus X_3} \nabla J_\lambda (w_j),w_j \rangle &= 2 J_\lambda (w_j)+2\int_\cM \alpha(\sigma)F(w_j(\sigma))\, dv_g \notag \\
&- \Vert P_{X_2} w_j \Vert^2+\lambda \Vert P_{X_2}  w_j\Vert^2_{L^2(\cM)} \notag \\
& {} +\int_\cM \alpha(\sigma)f(w_j(\sigma))P_{X_2}w_j(\sigma)\, dv_g.
\end{align}
Reordering the terms in \eqref{eq93} and using \eqref{eq84}, \eqref{eq85}, \eqref{eq86} and \eqref{eq87} we get
\begin{multline} \label{eq94}
\frac{1}{\Vert w_j \Vert^r} \left( 2 \int_\cM \alpha(\sigma)F(w_j(\sigma))\,dv_g -\int_\cM \alpha(\sigma)f(w_j(\sigma)) w_j(\sigma)\, dv_g \right. \\
\left. {} +\int_\cM \alpha(\sigma)f(w_j(\sigma))P_{X_2}w_j \, dv_g \right) \to 0
\end{multline}
as $j \to \infty$.

\textit{Claim}: $w_\infty=0$

We first need  to show
\begin{equation} \label{eq95}
\frac{\displaystyle\int_\cM \alpha(\sigma)f(w_j(\sigma))P_{X_2}w_j \, dv_g}{\Vert w_j \Vert^r} \to 0
\end{equation}
as $j \to \infty$. Indeed, having $X_2$ finite dimension, all norms are equivalent. Moreover, all eigenfunctions are bounded by \cite[Theorem 3.1]{MR4020749}, thus
\[
\Vert P_{X_2} w_j \Vert_{L^\infty(\cM)} \to 0
\]
as $j \to \infty$ remembering \eqref{eq85}. Then, from Lemma \ref{lemma4} $(i)$
\begin{multline*}
\left| \frac{\displaystyle\int_\cM \alpha (\sigma) f(w_j(\sigma))P_{X_2}w_j(\sigma)\,dv_g}{\Vert w_j \Vert^r} \right| \\ \leq \frac{2\varepsilon \displaystyle\int_\cM \alpha(\sigma)w_j(\sigma)\,dv_g+rA_1^\varepsilon \Vert P_{X_2} w_j \Vert_{L^{\infty}(\cM)}\displaystyle\int_\cM\alpha(\sigma)|w_j(\sigma)|^{r-1}\, dv_g}{\Vert w_j \Vert^r}.
\end{multline*}
Applying the H\"older inequality twice and recalling $\h \hookrightarrow L^2(\cM)$ it follows
\begin{multline*}
\left| \frac{\displaystyle\int_\cM \alpha (\sigma) f(w_j(\sigma))P_{X_2}w_j(\sigma)\,dv_g}{\Vert w_j \Vert^r} \right| \\
\leq \frac{2\varepsilon C \Vert \alpha \Vert_{L^2(\cM)}}{\Vert w_j \Vert^{r-2}}+\frac{rA_1^\varepsilon \Vert P_{X_2} w_j \Vert_{L^{\infty}(\cM)} \Vert \alpha \Vert^r_{L^{r}(\cM)}\Big\Vert \frac{w_j}{\Vert w_j \Vert} \Big\Vert^{r-1}_{L^r(\cM)}}{\Vert w_j \Vert}
\end{multline*}
for some $C>0$. Now the validity of \eqref{eq95} follows from the boundedness of the sequence $w_j / \Vert w_j \Vert$ in $L^r(\cM)$. In virtue of \eqref{eq95}, combining \eqref{eq84} with $(f_4)$, we obtain
\begin{multline} \label{eq100}
o(1) =  \frac{2\displaystyle\int_\cM \alpha(\sigma)F(w_j(\sigma))\,dv_g-\displaystyle\int_\cM \alpha(\sigma)f(w_j(\sigma))w_j(\sigma)\,dv_g}{\Vert w_j \Vert^r} \\ \leq \frac{(2-r)\displaystyle\int_\cM \alpha(\sigma)F(w_j(\sigma))\,dv_g}{\Vert w_j \Vert^r}\leq 0
\end{multline}
from which we deduce
\[
\lim_{j \to \infty} \frac{\displaystyle\int_\cM \alpha(\sigma)F(w_j(\sigma))\,dv_g}{\Vert w_j \Vert^r} =0
\]
To conclude the proof of the claim, it suffices to apply the previous result to
\[
\left\Vert \frac{w_j}{\Vert w_j \Vert} \right\Vert_{L^r(\cM)} \leq \frac{A_4 \Vert \alpha \Vert_{L^1(\cM)}}{A_3\Vert w_j \Vert^r}+\frac{1}{A_3 \Vert w_j \Vert^r}\displaystyle\int_\cM \alpha(\sigma)F(w_j(\sigma))\,dv_g
\]
where we used Lemma \ref{lemma4} $(iii)$. Now, we observe that
\[
0\leftarrow \frac{J_\lambda(w_j)}{\Vert w_j \Vert^2}=\frac{1}{2}-\frac{\lambda}{2}\Bigg\Vert \frac{w_j}{\Vert w_j \Vert} \Bigg\Vert_{L^2(\cM)}^2-\frac{1}{\Vert w_j \Vert^2}\displaystyle\int_\cM \alpha(\sigma) F(w_j(\sigma))\, dv_g.
\]
Recalling $w_j/\Vert w_j \Vert \to 0$ in $L^2(\cM)$ we obtain
\begin{equation} \label{eq96}
\frac{1}{\Vert w_j \Vert^2}\displaystyle\int_\cM \alpha(\sigma) F(w_j(\sigma))\, dv_g \to \frac{1}{2}
\end{equation}
as $j \to \infty$.  Furthermore, from Lemma \ref{lemma4} $(iii)$ it follows
\begin{equation} \label{eq97}
\frac{1}{\Vert w_j \Vert^2}\displaystyle\int_\cM \alpha (\sigma) |w_j(\sigma)|^r\, dv_g \leq \frac{A_4 \Vert \alpha \Vert_{L^1(\cM)}}{A_3 \Vert w_j \Vert^2}+\frac{1}{A_3\Vert w_j \Vert^2}\displaystyle\int_\cM \alpha(\sigma) F(w_j(\sigma))\, dv_g.
\end{equation}
Because of \eqref{eq96}, the second member of \eqref{eq97} is bounded and so there exist  a $\tilde{C}>0$ such that
\begin{equation} \label{eq98}
\displaystyle\int_\cM \alpha(\sigma) |w_j(\sigma)|^r\, dv_g \leq \tilde{C}\Vert w_j \Vert^2.
\end{equation}
At this point, applying Lemma \ref{lemma4} $(ii)$, the H\"older inequality and \eqref{eq98}, we notice
\begin{gather*}
\frac{\displaystyle\int_\cM \left|  \alpha (\sigma) f(w_j(\sigma))P_{X_2}w_j(\sigma)\right|\,dv_g}{\Vert w_j \Vert^r} \\
\leq \frac{\Vert P_{X_2}w_j\Vert_{L^\infty (\cM)}}{\Vert w_j \Vert^2} \left(A_2 \Vert \alpha \Vert_{L^1(\cM)}
+ A_2^\varepsilon\displaystyle\int_\cM |\alpha(\sigma)|^{\frac{1}{r}}|\alpha(\sigma)|^{\frac{r-1}{r}}|w_j(\sigma)|^{r-1} \right)  \\
\leq \Vert P_{X_2} w_j \Vert_{L^\infty} \left[ \frac{A_2 \Vert \alpha \Vert_{L^1(\cM)}}{\Vert w_j \Vert^2}  + \frac{A_2^\varepsilon \Vert \alpha \Vert^{\frac{1}{r}}_{L^1(\cM)}}{\Vert w_j \Vert^{\frac{2}{r}}} \left( \frac{\Vert \alpha w_j \Vert^r_{L^r(\cM)}}{\Vert w_j \Vert^2}\right)^{\frac{r-1}{r}}\right]  \\
\leq \Vert P_{X_2} w_j \Vert_{L^\infty} \left[ \frac{A_2 \Vert \alpha \Vert_{L^1(\cM)}}{\Vert w_j \Vert^2}  +\frac{A_2^\varepsilon \tilde{C}^{1-\frac{1}{r}}\Vert \alpha \Vert^{\frac{1}{r}}_{L^1(\cM)}}{\Vert w_j \Vert^{\frac{2}{r}}} \right] ,
\end{gather*}
which implies
\begin{equation} \label{eq99}
\lim_{j \to \infty} \frac{\displaystyle\int_\cM \left|  \alpha (\sigma) f(w_j(\sigma))P_{X_2}w_j(\sigma)\right|\,dv_g}{\Vert w_j \Vert^r}=0.
\end{equation}
Dividing \eqref{eq93} by $\Vert w_j \Vert^2$ and using \eqref{eq84}, \eqref{eq85}, \eqref{eq86} and \eqref{eq99} we get
\[
\frac{1}{\Vert w_j \Vert^2} \left( \displaystyle\int_\cM \alpha(\sigma) F(w_j(\sigma))\,dv_g-\displaystyle\int_\cM \alpha(\sigma) f(w_j(\sigma))w_j(\sigma)\, dv_g\right) \to 0
\]
as $j \to \infty$. To conclude the proof, we argue as did in \eqref{eq100} to obtain
\begin{equation} \label{eq101}
\lim_{j \to \infty} \frac{1}{2}\displaystyle\int_\cM \alpha(\sigma)F(w_j(\sigma))\, dv_g=0.
\end{equation}
Clearly \eqref{eq96} and \eqref{eq101} are not compatible.
\end{proof}
\begin{proposition} \label{prop 5}
Assume $f$ satisfies $(f_1)-(f_4)$. For any $\varrho>0$ there exists $\eta_\varrho>0$ such that for any $\eta', \eta'' \in (0,\eta_\varrho)$, with $\eta' <\eta''$ we have that $\nabla \left(J_\lambda, X_1 \oplus X_3, \eta', \eta'' \right)$ is verified for all $\lambda \in (\lambda_{k-1}+\varrho, \lambda_{k+h+1}-\varrho)$.
\end{proposition}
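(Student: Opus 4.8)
The plan is to argue by contradiction and to reach an absurdity through a blow-up analysis that couples Proposition~\ref{prop4}, Proposition~\ref{prop3} and the spectral-gap estimates of Lemma~\ref{lemma3}. Fix $\varrho>0$ and suppose that no $\eta_\varrho$ with the stated property exists. Choosing $\eta_\varrho=1/n$ we obtain levels $0<\eta'_n<\eta''_n<1/n$ and parameters $\lambda_n\in(\lambda_{k-1}+\varrho,\lambda_{k+h+1}-\varrho)$ for which $(\nabla)(J_{\lambda_n},X_1\oplus X_3,\eta'_n,\eta''_n)$ fails, so that for \emph{every} $\gamma>0$ the infimum in Definition~\ref{def:nablacondition} vanishes (a nonempty constraint set with infimum $0$, since an empty one would make the condition hold vacuously). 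The decisive choice is to test failure with $\gamma=\eta'_n$: this yields $w_n$ with $\eta'_n\le J_{\lambda_n}(w_n)\le\eta''_n$, with $\dist(w_n,X_1\oplus X_3)=\|P_{X_2}w_n\|\le\eta'_n$, and, the relevant infimum being zero, with $\|P_{X_1\oplus X_3}\nabla J_{\lambda_n}(w_n)\|\le\eta'_n$ as well. Since $F\ge0$ and $\lambda_n>0$ give $J_{\lambda_n}(w_n)\le\frac12\|w_n\|^2$, we get $\|w_n\|\ge\sqrt{2\eta'_n}$; up to a subsequence $\lambda_n\to\lambda_*\in[\lambda_{k-1}+\varrho,\lambda_{k+h+1}-\varrho]$.

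First I would establish a strong limit. The three properties above are exactly the hypotheses of Proposition~\ref{prop4}, whose proof is unaffected by replacing the fixed $\lambda$ with the bounded sequence $\lambda_n$; hence $(w_n)_n$ is bounded and, up to a subsequence, $w_n\rightharpoonup w_*$ in $\h$ with $w_n\to w_*$ in $L^s(\cM)$ for every $s\in[2,2^*)$. Writing $w_n=\nabla J_{\lambda_n}(w_n)+\Delta_g^{-1}(\lambda_n w_n+\alpha f(w_n))$ as in \eqref{eq52} and using the compactness of $\Delta_g^{-1}$, the component $P_{X_1\oplus X_3}w_n$ converges strongly, while $P_{X_2}w_n\to0$; thus $w_n\to w_*$ strongly in $\h$. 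Since $\langle P_{X_1\oplus X_3}\nabla J_{\lambda_n}(w_n),\varphi\rangle=\langle\nabla J_{\lambda_n}(w_n),\varphi\rangle$ for $\varphi\in X_1\oplus X_3$, passing to the limit shows that $w_*\in X_1\oplus X_3$ is a critical point of $J_{\lambda_*}$ constrained on $X_1\oplus X_3$ with $J_{\lambda_*}(w_*)=\lim_n J_{\lambda_n}(w_n)=0$. By Proposition~\ref{prop3} this forces $w_*=0$, so $\|w_n\|\to0$.

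Next I would blow up. As $J_{\lambda_n}(w_n)\ge\eta'_n>0$, each $w_n\neq0$; set $v_n=w_n/\|w_n\|$, so $\|v_n\|=1$. The gain from the choice $\gamma=\eta'_n$ is that both error ratios are negligible: $\|P_{X_2}w_n\|/\|w_n\|\le\eta'_n/\sqrt{2\eta'_n}\to0$ and $\|P_{X_1\oplus X_3}\nabla J_{\lambda_n}(w_n)\|/\|w_n\|\le\eta'_n/\sqrt{2\eta'_n}\to0$. Dividing \eqref{eq52} by $\|w_n\|$, using $(f_1)$ through Lemma~\ref{lemma4}$(i)$ together with $\|w_n\|\to0$ to get $\alpha f(w_n)/\|w_n\|\to0$ in $L^{r'}(\cM)$, and invoking once more the compactness of $\Delta_g^{-1}$, I obtain that $P_{X_1\oplus X_3}v_n$ converges strongly and $P_{X_2}v_n\to0$. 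Hence $v_n\to v_*$ strongly with $\|v_*\|=1$, $P_{X_2}v_*=0$, and, by \eqref{eq50}, $\langle v_*,\varphi\rangle=\lambda_*\langle v_*,\varphi\rangle_{L^2(\cM)}$ for all $\varphi\in X_1\oplus X_3$. Writing $v_*=\bar a+\bar c$ with $\bar a\in X_1$ and $\bar c\in X_3$ and testing with $\bar a$ and $\bar c$, Lemma~\ref{lemma3} yields $\|\bar a\|^2=\lambda_*\|\bar a\|_{L^2(\cM)}^2\le\lambda_{k-1}\|\bar a\|_{L^2(\cM)}^2$ and $\|\bar c\|^2=\lambda_*\|\bar c\|_{L^2(\cM)}^2\ge\lambda_{k+h+1}\|\bar c\|_{L^2(\cM)}^2$. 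Since $\lambda_{k-1}<\lambda_*<\lambda_{k+h+1}$, both relations force $\bar a=\bar c=0$, i.e. $v_*=0$, contradicting $\|v_*\|=1$.

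The hard part is the joint calibration of the two infinitesimal parameters. Working at vanishing levels $\eta'_n$, the natural candidates for failure are the small multiples $te$ of an eigenfunction $e\in X_2$ of $\lambda_k$: these satisfy $J_{\lambda_n}(te)>0$ and $\|P_{X_1\oplus X_3}\nabla J_{\lambda_n}(te)\|=o(t)$, yet their distance to $X_1\oplus X_3$ is of order $t\sim\sqrt{\eta'_n}$. Choosing the admissible distance $\gamma$ (and the gradient tolerance) of order $\eta'_n=o(\sqrt{\eta'_n})$ is precisely what excludes these directions and forces the normalized $X_2$-component and the normalized gradient error to vanish in the blow-up; a choice of $\gamma$ independent of $\eta'_n$ would leave a nonzero eigenfunction in the limit and the contradiction would collapse. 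A minor point to verify is the robustness of Proposition~\ref{prop4} under the varying but bounded sequence $\lambda_n$.
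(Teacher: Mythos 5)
Your proof is correct, but it takes a genuinely different (and harder) route than the paper's, and the divergence happens at the very first step: how the contradiction is set up. The paper effectively fixes the threshold in advance, taking $\eta_\varrho:=\delta_\varrho$, the constant produced by Proposition~\ref{prop3}; it then negates the statement for a \emph{single} $\tilde\lambda$ and a \emph{single} pair $0<\eta'<\eta''<\delta_\varrho$, extracts a sequence $(w_j)_j$ exactly as you do, gets boundedness from Proposition~\ref{prop4}, strong convergence from the compactness of $\Delta_g^{-1}$, identifies the limit $w_\infty$ as a critical point of $J_{\tilde\lambda}$ constrained on $X_1\oplus X_3$ with value in $[\eta',\eta'']\subset[-\delta_\varrho,\delta_\varrho]$, and applies Proposition~\ref{prop3} to get $w_\infty=0$; the contradiction is then immediate, since continuity of $J_{\tilde\lambda}$ along the strongly convergent sequence forces $J_{\tilde\lambda}(w_\infty)\geq\eta'>0=J_{\tilde\lambda}(0)$. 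You instead negate the fully quantified statement, so your levels $\eta'_n<\eta''_n<1/n$ vanish and your parameters $\lambda_n$ vary; this destroys the cheap contradiction ``positive level at the trivial point,'' and you pay for it with the entire second half of your argument: the calibrated choice $\gamma=\eta'_n$, the lower bound $\Vert w_n\Vert\geq\sqrt{2\eta'_n}$, the normalization $v_n=w_n/\Vert w_n\Vert$, the limit equation $\langle v_*,\varphi\rangle=\lambda_*\langle v_*,\varphi\rangle_{L^2(\cM)}$ on $X_1\oplus X_3$, and the spectral-gap exclusion via Lemma~\ref{lemma3}. All of this is sound (the flagged robustness of Proposition~\ref{prop4} under a bounded sequence $\lambda_n$ is indeed harmless, since its proof only uses $\lambda$ through quantities like $\lambda\Vert w_j\Vert_{L^2(\cM)}^2/\Vert w_j\Vert^2$, which stay uniformly controlled), and your blow-up step is a nice piece of quantitative insight the paper never needs: it explains why the admissible distance must be taken $o(\sqrt{\eta'})$ to exclude small multiples of the $X_2$ eigenfunctions. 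What each approach buys: the paper's is much shorter because the quantitative content of Proposition~\ref{prop3} (its $\delta_\varrho$) is recycled as the definition of $\eta_\varrho$; yours never needs to tie $\eta_\varrho$ to $\delta_\varrho$ at all (you only use that $0\in[-\delta_\varrho,\delta_\varrho]$), handles varying $\lambda$ and vanishing levels in one pass, and in exchange requires the extra linearization argument. Either proof establishes the proposition.
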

\begin{proof}
By contradiction we suppose there is $\tilde{\varrho}>0$ such that for any $\eta_{\tilde{\varrho}}>0$ we can find $\tilde{\lambda} \in \left[ \lambda_{k-1}+\tilde{\varrho},\lambda_{k+h+1}-\tilde{\varrho} \right)$ and $\eta' <\eta''$ such that
\[
(\nabla) \left( J_\lambda,X_1\oplus X_3, \eta', \eta'' \right)
\]
does not hold. If so, it is possible to find a sequence $(w_j)_j \subset \h$ such that
\[
J_{\tilde{\lambda}}(w_j) \in \left[ \eta', \eta'' \right]
\]
\begin{equation} \label{eq103}
\dist (w_j,X_1\oplus X_3) \to 0 \quad \mbox{as} \ j \to \infty
\end{equation}
\begin{equation} \label{eq104}
P_{X_1 \oplus X_3} \nabla J_{\tilde{\lambda}}(w_j) \to 0 \quad \mbox{as} \ j \to \infty.
\end{equation}
Because of that, Proposition \ref{prop4} can be applied, thus $(w_j)_j$ is bounded in $\h$. Hence, up to a subsequence,
\begin{equation} \label{eq106}
w_j \rightharpoonup w_\infty \quad \mbox{in} \ \h
\end{equation}
\begin{equation} \label{eq107}
w_j \to w_\infty \quad \mbox{in} \ L^s(\cM) \quad \mbox{for all} \ s \in \left[2, 2^* \right)
\end{equation}
\[
w_j(\sigma) \to w_{\infty} \quad \mbox{a.e in} \cM
\]
as $ j \to \infty$. Now, arguing as we did to obtain \eqref{eq77}, we can find $\tilde{A}_1^\varepsilon, \tilde{A}_2^\varepsilon >0$ such that
\[
\displaystyle\int_\cM | \alpha(\sigma)f(w_j(\sigma))|^{\frac{r}{r-1}}\, dv_g \leq \tilde{A}_1^\varepsilon + A_2^{\varepsilon} \displaystyle\int_\cM |w_j(\sigma)|^r \, dv_g.
\]
Since $ w_j \to w_\infty$ in $L^r(\cM)$ there is $\tilde{C}>0$ such that
\[
\displaystyle\int_\cM | \alpha(\sigma)f(w_j(\sigma))|^{\frac{r}{r-1}}\, dv_g \leq \tilde{C}.
\]
Then, recalling that $\Delta_g^{-1}$ is a compact operator,
\begin{equation} \label{eq102}
P_{X_1\oplus X_3} \Delta_g^{-1} \left( \tilde{\lambda}w_j+\alpha f(w_j) \right) \to P_{X_1\oplus X_3} \Delta_g^{-1} \left( \tilde{\lambda}w_\infty+\alpha f(w_\infty) \right).
\end{equation}
Recalling \eqref{eq52}, we have
\[
P_{X_1 \oplus X_3} \nabla J_{\lambda}(w_j)=w_j-P_{X_2}w_j-P_{X_1 \oplus X_3} \Delta_g^{-1} \left( \tilde{\lambda} w_j + \alpha f(w_j) \right).
\]
Since that, \eqref{eq102}, \eqref{eq103} and \eqref{eq104} we deduce
\[
w_j \to P_{X_1 \oplus X_3} \Delta_g^{-1} \left( \tilde{\lambda} w_\infty + \alpha f(w_\infty) \right)
\]
in $\h$ as $j \to \infty$. Now, on one hand from \eqref{eq51} and\eqref{eq104} it follows
\begin{equation} \label{eq105}
\langle J_{\tilde{\lambda}}(w_j), \varphi \rangle = \langle w_j, \varphi \rangle -\tilde{\lambda} \langle w_j, \varphi \rangle_{L^2(\cM)}-\displaystyle\int_\cM \alpha (\sigma)f(w_j(\sigma)) \varphi (\sigma) \, dv_g \to 0
\end{equation}
for any $\varphi \in X_1 \oplus X_3$ as $j \to \infty$.
\end{proof}
On the other hand, from \eqref{eq106} and \eqref{eq107} we also have
\begin{equation} \label{eq108}
\langle J_{\tilde{\lambda}}, \varphi \rangle \to \langle w_\infty, \varphi \rangle -\tilde{\lambda} \langle w_\infty, \varphi \rangle_{L^2(\cM)}-\displaystyle\int_\cM \alpha(\sigma) f(w_j(\sigma)) \varphi (\sigma) \, dv_g
\end{equation}
for any $\varphi \in X_1 \oplus X_3$. Coupling \eqref{eq105} and \eqref{eq108} we get that $w_\infty$ is a critical point for $J_{\tilde{\lambda}}$ constrained on $X_1 \oplus X_3$. Then, we can apply Proposition \ref{prop3} to obtain $w_\infty =0$. But, since $J_{\tilde{\lambda}}(w_j) \geq \eta'$, $w_j \to w_\infty $ in $\h$, exploiting the continuity of $J_{\tilde{\lambda}}$ we obtain $J_{\tilde{\lambda}}(w_\infty)>0$. This is a contradiction being $J_{\tilde{\lambda}}(0)=0$.
\section{Proof of  Theorem \ref{th2}}

We begin with a technical result.
\begin{lemma} \label{lemma5}
If $f$ verifies $(f_1)-(f_4)$ then
\[
\lim_{\lambda \to \lambda_k} \sup_{w \in E_{k+h}} J_\lambda(w)=0
\]
\end{lemma}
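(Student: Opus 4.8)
The plan is to diagonalise $J_\lambda$ on the finite–dimensional space $E_{k+h}$ and to reduce the statement to a uniform bound as $\lambda\uparrow\lambda_k$. By Theorem~\ref{th1}$(f)$ and \eqref{eq37}, every $w=\sum_{j=1}^{k+h}c_je_j\in E_{k+h}$ satisfies $\Vert w\Vert^2=\sum_{j=1}^{k+h}\lambda_jc_j^2$ and $\Vert w\Vert_{L^2(\cM)}^2=\sum_{j=1}^{k+h}c_j^2$, whence
\[
J_\lambda(w)=\frac12\sum_{j=1}^{k+h}(\lambda_j-\lambda)c_j^2-\int_\cM\alpha(\sigma)F(w(\sigma))\,dv_g.
\]
Since $\lambda_j\leq\lambda_k$ for $j\leq k+h$ and $F\geq0$ by $(f_4)$, this yields the one–sided bound $J_\lambda(w)\leq\tfrac12(\lambda_k-\lambda)\Vert w\Vert_{L^2(\cM)}^2$. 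As $J_\lambda(0)=0$ we always have $\sup_{E_{k+h}}J_\lambda\geq0$; moreover, writing $J_\lambda=J_{\lambda_k}+\tfrac12(\lambda_k-\lambda)\Vert\cdot\Vert_{L^2(\cM)}^2$ and noting $J_{\lambda_k}\leq0$ on $E_{k+h}$, the supremum is exactly $0$ for every $\lambda\geq\lambda_k$. It therefore remains to prove $\limsup_{\lambda\uparrow\lambda_k}\sup_{E_{k+h}}J_\lambda\leq0$.

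Fix $\lambda<\lambda_k$ and set $\delta:=\lambda_k-\lambda>0$. I would first argue that $J_\lambda$ is anticoercive on $E_{k+h}$, so that it attains its maximum at some $w_\lambda$; the conclusion then follows provided the maximizers stay bounded as $\delta\downarrow0$, because $0\leq J_\lambda(w_\lambda)\leq\tfrac{\delta}{2}\Vert w_\lambda\Vert_{L^2(\cM)}^2\to0$. To control the maximizers, suppose $\Vert w_{\lambda_n}\Vert\to\infty$ along a sequence $\lambda_n\uparrow\lambda_k$. Normalising $v_n:=w_{\lambda_n}/\Vert w_{\lambda_n}\Vert$, compactness of the unit sphere of the finite–dimensional $E_{k+h}$ lets us assume $v_n\to v$ with $\Vert v\Vert=1$. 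Dividing $0\leq J_{\lambda_n}(w_{\lambda_n})$ by $\Vert w_{\lambda_n}\Vert^2$, the quadratic part converges to $\tfrac12\sum_{j\leq k-1}(\lambda_j-\lambda_k)\tilde c_j^2\leq0$, while the nonlinear part equals $\Vert w_{\lambda_n}\Vert^{-2}\int_\cM\alpha F(w_{\lambda_n})\,dv_g$.

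The heart of the matter is that this nonlinear quantity diverges. By Lemma~\ref{lemma4}$(iii)$, $\int_\cM\alpha F(w)\,dv_g\geq A_3\int_\cM\alpha|w|^r\,dv_g-A_4\Vert\alpha\Vert_{L^1(\cM)}$, and since $r>2$,
\[
\frac{1}{\Vert w_{\lambda_n}\Vert^2}\int_\cM\alpha F(w_{\lambda_n})\,dv_g\geq A_3\,\Vert w_{\lambda_n}\Vert^{\,r-2}\int_\cM\alpha|v_n|^r\,dv_g-o(1).
\]
If $\int_\cM\alpha|v|^r\,dv_g>0$, then $\int_\cM\alpha|v_n|^r\to\int_\cM\alpha|v|^r>0$ by continuity, so the right–hand side tends to $+\infty$; feeding this back into $J_{\lambda_n}(w_{\lambda_n})\geq0$ divided by $\Vert w_{\lambda_n}\Vert^2$ forces the limit to be $-\infty$, a contradiction. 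Hence the maximizers are bounded and $0\leq\sup_{E_{k+h}}J_\lambda\leq\tfrac12(\lambda_k-\lambda)C\to0$, proving the lemma.

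The \emph{main obstacle} is thus exactly the divergence step: one must guarantee that $w\mapsto\int_\cM\alpha F(w)\,dv_g$ grows superquadratically in the relevant directions, i.e. that $\int_\cM\alpha|v|^r\,dv_g>0$ for the limiting direction $v$. This is a non-degeneracy condition relating $E_{k+h}$ to the support of $\alpha$: no nonzero element of $E_{k+h}$ may have its positive part supported in $\{\alpha=0\}$. It holds automatically when $\alpha>0$ almost everywhere, as in the model equation of Theorem~\ref{th3}; in the general case one excludes such degenerate directions by invoking a unique continuation property for the eigenfunctions, which are finite combinations of solutions of $-\Delta_g e_j+Ve_j=\lambda_je_j$ and so cannot vanish on a set of positive measure. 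I expect securing this non-degeneracy uniformly in $\lambda$ near $\lambda_k$ to be the only delicate point, the remaining steps being routine.
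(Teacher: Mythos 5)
Your proof is essentially the paper's own argument in a slightly different packaging: the paper likewise reduces everything to the finite-dimensional space $E_{k+h}$ (noting the supremum is attained there), controls the quadratic part via Lemma~\ref{lemma3} so that only the factor $\lambda_k-\lambda$ survives, and rules out unbounded maximizing sequences using Lemma~\ref{lemma4}~$(iii)$ together with $r>2$ and equivalence of norms on $E_{k+h}$, running the whole thing as a contradiction along a sequence $\tau_j\to\lambda_k$ split into bounded and unbounded cases. The non-degeneracy you isolate --- positivity of $w\mapsto\int_\cM\alpha(\sigma)\,\vert w^{+}(\sigma)\vert^{r}\,dv_g$ on $E_{k+h}\setminus\lbrace 0\rbrace$ --- is genuinely needed, but it is not an extra cost of your route: the paper relies on exactly the same fact implicitly when it replaces $\int_\cM\alpha(\sigma)\vert w_j(\sigma)\vert^r\,dv_g$ by $\Vert w_j\Vert_{L^r(\cM)}^r$ (dropping the weight $\alpha$) and invokes ``all norms are equivalent,'' so your version, which flags and proposes to close this point, is if anything more careful than the published proof.
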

\begin{proof}
We start noticing that from Lemma \ref{lemma4} $(iii)$ it follows
\[
\lim_{\xi \to \pm \infty} J_{\lambda} (\xi w)=-\infty
\]
for all $w \in E_{k+h}$, thus
\[
\sup_{w \in E_{k+h}} J_\lambda(w)
\]
is achieved. Now, by contradiction we suppose there is a sequence $\tau_j \to \lambda_k$ as $j \to \infty$ and a sequence $(w_j)_j \subset E_{k+h}$ such that
\begin{equation} \label{eq109}
J_{\tau_j}(w_j)=\sup_{w \in E_{k+h}} J_\lambda(w)>\gamma
\end{equation}
for some $\gamma >0$. We split the proof analysing separately the case $(w_j)_j$ bounded and unbounded. In the first one, since the weak and the strong topology coincide, we can suppose $w_j \to w_\infty$ in $E_{k+h}$. In order to reach a contradiction, keeping into account \eqref{eq109} and letting $j \to \infty$, it suffices to apply Lemma \ref{lemma3} to obtain
\[
\gamma \leq J_{\lambda_k} (w_\infty)= \left( \lambda_{k+h}-\lambda_k \right) - \displaystyle\int_\cM \alpha(\sigma) F(w_{\infty}(\sigma))\, dv_g \leq 0.
\]
Instead, if $(w_j)_j$ is unbounded, we can assume $\Vert w_j \Vert \to \infty$ as $j \to \infty$. From Lemma \ref{lemma4} $(iii)$ it follows
\[
0<\gamma \leq J_{\tau_j}(w_j) \leq \frac{1}{2} \Vert w_j \Vert^2-\frac{\tau_j}{2} \Vert w_j \Vert^2_{L^2(\cM)}-A_3 \vert w_j \Vert^r_{L^r(\cM)}+A_4 \Vert \alpha \Vert_{L^1(\cM)}.
\]
Exploiting again the fact that on the finite-dimensional subspace $E_{h+k}$ all norms are equivalent, the right hand side of the above inequality goes to $- \infty$ concluding the proof.
\end{proof}
\begin{proof}[Proof of Theorem \ref{th2}]
We want to apply \cite[Theorem 2.10]{MR1655535}. We start choosing $ \varrho>0$. In correspondence of that, thanks to Proposition \ref{prop 5} there are $\eta_\varrho, \eta', \eta''>0$, with $\eta'<\eta''<\eta_\varrho$ such that $\nabla \left(J_\lambda, X_1 \oplus X_3, \eta', \eta'' \right)$ is verified for all $\lambda \in (\lambda_{k-1}+\varrho, \lambda_{k+h+1}-\varrho)$. Exploiting Lemma \ref{lemma5} we also have the existence of $\overline{\varrho}>0$, with $\overline{\varrho} \leq \varrho$ such that
\[
\sup_{w \in E_{k+h}} J_\lambda(w) \leq \eta'
\]
for $\lambda \in (\lambda_k+\overline{\varrho}, \lambda_k)$. At this point, recalling Propositions \ref{prop1} and \ref{prop2}, all hypothesis of Theorem 2.10 in \cite{MR1655535} are satisfied, and we have the existence of two non trivial critical points $w_1$ and $w_2$ such that
\[
J_\lambda(w_i) \in \left[ \eta', \eta'' \right] \quad (i=1,2).
\]
The third critical point $w_3$ is a consequence of the classical Linking Theorem. Furthermore, from Lemma \ref{lemma5}, choosing $\lambda$ sufficiently close to $\lambda_k$, we can see that
\[
J_\lambda (w_i) < \sup_{w \in E_{k+h}}J_\lambda (w) \leq J_\lambda (w_3), \quad (i=1,2)
\]
proving that $w_1$, $w_2$, $w_3$ are distinct.
\end{proof}
\bibliographystyle{amsplain}
\bibliography{TSM}

\providecommand{\bysame}{\leavevmode\hbox to3em{\hrulefill}\thinspace}
\providecommand{\MR}{\relax\ifhmode\unskip\space\fi MR }
\providecommand{\MRhref}[2]{%
  \href{http://www.ams.org/mathscinet-getitem?mr=#1}{#2}
}
\providecommand{\href}[2]{#2}
\begin{thebibliography}{10}

\bibitem{zbMATH03808476}
Thierry {Aubin}, \emph{{Nonlinear analysis on manifolds. Monge-Amp\`ere
  equations}}, vol. 252, Springer, Berlin, 1982 (English).

\bibitem{zbMATH03605308}
V.~{Benci} and D.~{Fortunato}, \emph{{Discreteness conditions of the spectrum
  of Schr\"odinger operators}}, {J. Math. Anal. Appl.} \textbf{64} (1978),
  695--700 (English).

\bibitem{MR1186643}
F.~A. Berezin and M.~A. Shubin, \emph{The {S}chr\"{o}dinger equation},
  Mathematics and its Applications (Soviet Series), vol.~66, Kluwer Academic
  Publishers Group, Dordrecht, 1991, Translated from the 1983 Russian edition
  by Yu. Rajabov, D. A. Le\u{\i}tes and N. A. Sakharova and revised by Shubin,
  With contributions by G. L. Litvinov and Le\u{\i}tes. \MR{1186643}

\bibitem{LiuBo}
Liu Bo, \emph{Notes of global analysis on manifolds}, Available on-line.

\bibitem{MR2759829}
Haim Brezis, \emph{Functional analysis, {S}obolev spaces and partial
  differential equations}, Universitext, Springer, New York, 2011. \MR{2759829}

\bibitem{zbMATH00217439}
Kung {C}hing {Chang}, \emph{{Infinite dimensional Morse theory and multiple
  solution problems}}, vol.~6, Boston: Birkh\"auser, 1993 (English).

\bibitem{MR4020749}
Francesca Faraci and Csaba Farkas, \emph{A characterization related to
  {S}chr\"{o}dinger equations on {R}iemannian manifolds}, Commun. Contemp.
  Math. \textbf{21} (2019), no.~8, 1850060, 24. \MR{4020749}

\bibitem{MR2569498}
Alexander Grigor'yan, \emph{Heat kernel and analysis on manifolds}, AMS/IP
  Studies in Advanced Mathematics, vol.~47, American Mathematical Society,
  Providence, RI; International Press, Boston, MA, 2009. \MR{2569498}

\bibitem{zbMATH01225960}
Emmanuel {Hebey}, \emph{{Introduction \`a l'analyse non lin\'eaire sur les
  vari\'et\'es}}, Paris: Diderot Editeur, 1997 (French).

\bibitem{zbMATH01447265}
\bysame, \emph{{Nonlinear analysis on manifolds: Sobolev spaces and
  inequalities}}, vol.~5, Providence, RI: American Mathematical Society (AMS);
  New York, NY: Courant Institute of Mathematical Sciences, New York Univ.,
  2000 (English).

\bibitem{MR2954043}
John~M. Lee, \emph{Introduction to smooth manifolds}, second ed., Graduate
  Texts in Mathematics, vol. 218, Springer, New York, 2013. \MR{2954043}

\bibitem{MR1655535}
A.~Marino and C.~Saccon, \emph{Some variational theorems of mixed type and
  elliptic problems with jumping nonlinearities}, vol.~25, 1997, Dedicated to
  Ennio De Giorgi, pp.~631--665 (1998). \MR{1655535}

\bibitem{zbMATH00042549}
Jean {Mawhin} and Michel {Willem}, \emph{{Critical point theory and Hamiltonian
  systems}}, vol.~74, New York etc.: Springer-Verlag, 1989 (English).

\bibitem{MR1784811}
Anna~Maria Micheletti, Angela Pistoia, and Claudio Saccon, \emph{Three
  solutions of a fourth order elliptic problem via variational theorems of
  mixed type}, Appl. Anal. \textbf{75} (2000), no.~1-2, 43--59. \MR{1784811}

\bibitem{MR3656482}
Giovanni Molica~Bisci, Dimitri Mugnai, and Raffaella Servadei, \emph{On
  multiple solutions for nonlocal fractional problems via {$\nabla$}-theorems},
  Differential Integral Equations \textbf{30} (2017), no.~9-10, 641--666.
  \MR{3656482}

\bibitem{MR3886596}
Giovanni Molica~Bisci and Simone Secchi, \emph{Elliptic problems on complete
  non-compact {R}iemannian manifolds with asymptotically non-negative {R}icci
  curvature}, Nonlinear Anal. \textbf{177} (2018), no.~part B, 637--672.
  \MR{3886596}

\bibitem{MR2090280}
Dimitri Mugnai, \emph{Multiplicity of critical points in presence of a linking:
  application to a superlinear boundary value problem}, NoDEA Nonlinear
  Differential Equations Appl. \textbf{11} (2004), no.~3, 379--391.
  \MR{2090280}

\bibitem{MR2402920}
\bysame, \emph{Four nontrivial solutions for subcritical exponential
  equations}, Calc. Var. Partial Differential Equations \textbf{32} (2008),
  no.~4, 481--497. \MR{2402920}

\bibitem{MR1708175}
Angela Pistoia, \emph{An application of a variational theorem of mixed type to
  a superlinear elliptic problem}, J. Math. Anal. Appl. \textbf{237} (1999),
  no.~1, 303--319. \MR{1708175}

\bibitem{MR1013117}
H.~L. Royden, \emph{Real analysis}, third ed., Macmillan Publishing Company,
  New York, 1988. \MR{1013117}

\bibitem{MR1411681}
Michael Struwe, \emph{Variational methods}, second ed., Ergebnisse der
  Mathematik und ihrer Grenzgebiete (3) [Results in Mathematics and Related
  Areas (3)], vol.~34, Springer-Verlag, Berlin, 1996, Applications to nonlinear
  partial differential equations and Hamiltonian systems. \MR{1411681}

\bibitem{MR1954516}
Hossein~T. Tehrani, \emph{A multiplicity result for the jumping nonlinearity
  problem}, J. Differential Equations \textbf{188} (2003), no.~1, 272--305.
  \MR{1954516}

\end{thebibliography}
\end{document}